\theoremstyle{plain}
\newtheorem{pretheo}{Theorem}[section]
\newtheorem{preassu}[pretheo]{Assumption}
\newtheorem{precoro}[pretheo]{Corollary}
\newtheorem{predefi}[pretheo]{Definition}
\newtheorem{preexam}[pretheo]{Example}
\newtheorem{prelemm}[pretheo]{Lemma}
\newtheorem{preprop}[pretheo]{Proposition}
\newtheorem{prerema}[pretheo]{Remark}
\newenvironment{theo}{\begin{pretheo}}{\end{pretheo}}
\newenvironment{coro}{\begin{precoro}}{\end{precoro}}
\newenvironment{defi}{\begin{predefi}}{\end{predefi}}
\newenvironment{lemm}{\begin{prelemm}}{\end{prelemm}}
\newenvironment{rema}{\begin{prerema}\rm}{\end{prerema}}
\DeclareMathOperator{\di}{div}
\DeclareMathOperator{\Di}{Div}
\newcommand{\intd}{\,d}
\newcommand{\pa}{\partial}
\newcommand{\wh}[1]{\widehat{#1}}
\newcommand{\wt}[1]{\widetilde{#1}}
\newcommand{\Hol}{{\rm Hol}\,}
\newcommand{\BBM}{\mathbb{M}}
\newcommand{\Ba}{\mathbf{a}}
\newcommand{\Bb}{\mathbf{b}}
\newcommand{\Bf}{\mathbf{f}}
\newcommand{\Bg}{\mathbf{g}}
\newcommand{\Bn}{\mathbf{n}}
\newcommand{\Bu}{\mathbf{u}}
\newcommand{\Bv}{\mathbf{v}}
\newcommand{\BC}{\mathbf{C}}
\newcommand{\BD}{\mathbf{D}}
\newcommand{\BF}{\mathbf{F}}
\newcommand{\BG}{\mathbf{G}}
\newcommand{\BI}{\mathbf{I}}
\newcommand{\BK}{\mathbf{K}}
\newcommand{\BL}{\mathbf{L}}
\newcommand{\BM}{\mathbf{M}}
\newcommand{\BN}{\mathbf{N}}
\newcommand{\BR}{\mathbf{R}}
\newcommand{\BS}{\mathbf{S}}
\newcommand{\BT}{\mathbf{T}}
\newcommand{\BU}{\mathbf{U}}
\newcommand{\CA}{\mathcal{A}}
\newcommand{\CB}{\mathcal{B}}
\newcommand{\CF}{\mathcal{F}}
\newcommand{\CG}{\mathcal{G}}
\newcommand{\CH}{\mathcal{H}}
\newcommand{\CL}{\mathcal{L}}
\newcommand{\CM}{\mathcal{M}}
\newcommand{\CN}{\mathcal{N}}
\newcommand{\CR}{\mathcal{R}}
\newcommand{\CS}{\mathcal{S}}
\newcommand{\CT}{\mathcal{T}}
\newcommand{\Fa}{\mathfrak{a}}
\newcommand{\Fb}{\mathfrak{b}}
\newcommand{\Fl}{\mathfrak{l}}
\newcommand{\Fm}{\mathfrak{m}}
\newcommand{\Fp}{\mathfrak{p}}
\newcommand{\Fq}{\mathfrak{q}}
\newcommand{\Fr}{\mathfrak{r}}
\newcommand{\FA}{\mathfrak{A}}
\newcommand{\FB}{\mathfrak{B}}
\newcommand{\FX}{\mathfrak{X}}
\newcommand{\SSS}{\mathsf{S}}
\newcommand{\SST}{\mathsf{T}}
\newcommand{\bdry}{{\BR_0^N}}
\newcommand{\uhs}{{\BR_+^N}}
\newcommand{\ws}{{\BR^N}}
\newcommand{\al}{\alpha}
\newcommand{\ga}{\gamma}
\newcommand{\de}{\delta}
\newcommand{\te}{\theta}
\newcommand{\ka}{\kappa}
\newcommand{\la}{\lambda}
\newcommand{\si}{\sigma}
\newcommand{\ph}{\varphi}
\newcommand{\om}{\omega}
\newcommand{\Ga}{\Gamma}
\newcommand{\De}{\Delta}
\newcommand{\La}{\Lambda}
\newcommand{\Si}{\Sigma}
\newcommand{\Om}{\Omega}
\numberwithin{equation}{section} 
\begin{document}
\title[Compressible fluid model of Korteweg type]
{Compressible fluid model of Korteweg type with free boundary condition: model problem}

\author[Hirokazu Saito]{Hirokazu Saito}
\address{Department of Mathematics,
Faculty of Science and Engineering,
Waseda University, Okubo 3-4-1, Shinjuku-ku,
Tokyo 169-8555, Japan}
\email{hsaito@aoni.waseda.jp}

\subjclass[2010]{Primary: 35Q30; Secondary: 76N10.}

\keywords{Korteweg model; Compressible fluids; Free boundary condition; 
Half-space model problem; Resolvent problem; $\CR$-bounded solution operator families}

\thanks{}


\dedicatory{}

\begin{abstract}
The aim of this paper is to prove the existence of $\CR$-bounded solution operator families
for a resolvent problem on the upper half-space
arising from a compressible fluid model of Korteweg type with free boundary condition.
Such a compressible fluid model was derived by Dunn and Serrin (1985)
and studied by Kotschote (2008) as a boundary value problem with non-slip boundary condition.
\end{abstract}

\maketitle

\renewcommand{\thefootnote}{\arabic{footnote})}


\section{Introduction}
\subsection{Problem}
Let $\BR_+^N$ and $\BR_0^N$ be the upper half-space and its boundary for $N\geq 2$, respectively, 
i.e.
\begin{align*}
\BR_+^N&=\{x=(x',x_N) \mid x'=(x_1,\dots,x_{N-1})\in\BR^{N-1}, x_N>0\}, \\
\BR_0^N&=\{x=(x',x_N) \mid x'=(x_1,\dots,x_{N-1})\in\BR^{N-1}, x_N=0\}.
\end{align*}

This paper is concerned with the existence of $\CR$-bounded
solution operator families for the following resolvent problem
arising from a compressible fluid model of Korteweg type with free boundary condition:
\begin{equation}\label{resolvent}
\left\{\begin{aligned}
\la\rho + \rho_*\di\Bu&=d && \text{in $\BR_+^N$,} \\
\rho_*\la\Bu-\Di\{\mu_*\BD(\Bu)+(\nu_*-\mu_*)\di\Bu\BI-(\ga_*-\rho_*\ka_*\De)\rho\BI\}&=\Bf && \text{in $\uhs$,} \\
\{\mu_*\BD(\Bu)+(\nu_*-\mu_*)\di\Bu\BI-(\ga_*-\rho_*\ka_*\De)\rho\BI\}\Bn&=\Bg && \text{on $\BR_0^N$,} \\
\Bn\cdot\nabla\rho&= h && \text{on $\BR_0^N$.}
\end{aligned}\right.
\end{equation}

Here, $\la$ is the resolvent parameter varying in
\begin{equation*}
\Si_{\si,\de}=\{\la\in\BC \mid |\arg\la|<\pi-\si,|\la|>\de\}
\end{equation*}
for $\si\in(0,\pi/2)$ and $\de\geq0$;
$\rho=\rho(x)$ and $\Bu=\Bu(x)=(u_1(x),\dots,u_N(x))^\SST$ 
are the fluid density and the fluid velocity, respectively, that are unknown functions;
$\rho_*$, $\mu_*$, $\nu_*$, $\ga_*$, and $\ka_*$ are given constants such that
\begin{equation*}
\rho_*>0, \quad \mu_*>0, \quad \nu_*>0,
 \quad \ga_*\in(-\infty,\infty), \quad \ka_*>0,
\end{equation*}
while $d=d(x)$, $\Bf=\Bf(x)=(f_1(x),\dots, f_N(x))^\SST$,
$\Bg=\Bg(x)=(g_1,\dots,g_N(x))^\SST$, and $h=h(x)$ are given functions;
$\BD(\Bu)$ is the doubled strain tensor whose $(i,j)$-component is given by
$\pa_i u_j+\pa_j u_i$ with $\pa_j=\pa/\pa x_j$;
$\BI$ is the $N\times N$ identity matrix and $\Bn=(0,\dots,0,-1)^\SST$ is the unit outer normal to $\BR_0^N$;
$\Ba\cdot\Bb=\sum_{j=1}^N a_j b_j$
for $N$-vectors $\Ba=(a_1,\dots,a_N)^\SST$, $\Bb=(b_1,\dots,b_N)^\SST$.
Here and subsequently, we use the following notation for differentiations:
Let $u=u(x)$, $\Bv=(v_1(x),\dots,v_N(x))^\SST$, and $\BM=(M_{ij}(x))$ be
a scalar-, a vector-, and an $N\times N$ matrix-valued function 
defined on a domain of $\BR^N$, and then
\begin{align*}
&\nabla u = (\pa_1 u,\dots,\pa_N u)^\SST, \quad
\De u = \sum_{j=1}^N\pa_j^2 u, \quad
\De \Bv = (\De v_1,\dots , \De v_N)^\SST, \\
&\di\Bv = \sum_{j=1}^N\pa_j v_j, \quad
\nabla\Bv=\{\pa_j v_k \mid j,k=1,\dots,N\}, \\
&\nabla^2 \Bv = \{\pa_j\pa_k v_l \mid j,k,l=1,\dots,N\}, \quad
\Di\BM = \bigg(\sum_{j=1}^N \pa_j M_{1j},\dots,\sum_{j=1}^N \pa_j M_{Nj}\bigg)^\SST.
\end{align*}
It especially holds that $\Bn\cdot\nabla\rho=-\pa_N\rho$ and
\begin{align}\label{DivT}
&\Di \{\mu_*\BD(\Bu)+(\nu_*-\mu_*)\di\Bu\BI
-(\ga_*-\ka_*\De)\rho\BI\} \\
&=\mu_*\De\Bu+\nu_*\nabla\di\Bu-(\ga_*-\ka_*\De)\nabla\rho.
\notag
\end{align}

The motion of barotropic compressible fluids is governed by
\begin{equation}\label{170306_1}
\left\{\begin{aligned}
\pa_t\rho+\di(\rho\Bu)&=0 && \text{in $\Om$, $t>0$,} \\
\rho(\pa_t\Bu+\Bu\cdot\nabla\Bu) &= \Di(\BT-P(\rho)\BI) && \text{in $\Om$, $t>0$,} \\
(\rho,\Bu)|_{t=0} &=(\rho_0,\Bu_0) && \text{in $\Om$,}
\end{aligned}\right.
\end{equation}
where $\Om$ is a domain of $\BR^N$;
$P:[0,\infty)\to\BR$ is a given smooth function describing the pressure;
$\rho_0$ and $\Bu_0$ are given initial data.
Dunn and Serrin \cite{DS85} introduced the thermomechanics of interstitial working,
which tells us that, for a special material of Korteweg type,
the stress tensor $\BT$ is given by 
\begin{align}\label{SK}
&\BT=\BS(\Bu)+\BK(\rho), \quad 
\BS(\Bu) = \mu\BD(\Bu)+(\nu-\mu)\di\Bu\BI, \\
&\BK(\rho) = \frac{\ka}{2}\left(\De\rho^2-|\nabla\rho|^2\right)\BI -\ka\nabla\rho\otimes\nabla\rho.
\notag
\end{align}
Here $\mu$ and $\nu$ are viscosity coefficients, while $\ka$ is a capillary coefficient.

Let us recall brief history of mathematical analysis for
the compressible fluid model of Korteweg type
which means the system \eqref{170306_1} together with \eqref{SK}
in the present paper.

We start with the whole space case $\Om=\BR^N$.
Danchin and Desjardins \cite{DD01} proved in some critical space  
the existence and uniqueness of strong solutions global in time for 
initial data close enough to stable equilibria
and local in time for initial densities bounded away from zero.
For strong solutions, we also refer to a previous work, Hattori and Li \cite{HL96}, 
that requires higher regularity of initial data than Danchin and Desjardins \cite{DD01}.
Haspot \cite{Haspot11}, on the other hand, proved 
the existence of global weak solutions.

Concerning boundary value problems,
Kotschote \cite{Kotschote08} treated mainly bounded domains with the boundary condition:
\begin{equation}\label{DNbc}
\Bu=0 \quad \text{on $\Ga$,} \quad \Bn\cdot\nabla\rho=0 \quad \text{on $\Ga$,}
\end{equation}
where $\Ga$ is the boundary of $\Om$ and $\Bn$ is the unit outer normal to $\Ga$.
In \cite{Kotschote08}, he proved for isothermal cases
the existence and uniqueness of strong solutions local in time 
by means of the contraction mapping principle together with the maximal $L_p\text{-}L_q$ regularity $(p=q)$
of the linearized system,
where the initial data should satisfy some compatibility conditions and $\rho_0>0$ on $\overline{\Om}$.
For non-isothermal cases,
we also refer to his papers \cite{Kotschote10, Kotschote12, Kotschote14}
based on approaches similar to \cite{Kotschote08}.

We are interested in a free boundary value problem  
of the compressible fluid model of Korteweg type,
i.e. the domain $\Om$ is replaced by an unknown domain $\Om(t)$, depending on time $t$, with boundary $\Ga(t)$.
Then we have a further equation of time-evolution of $\Ga(t)$
such as $V_t=\Bu\cdot\Bn_t$, 
where $\Bn_t$ is the unit outer normal to $\Ga(t)$
and $V_t$ is the velocity of evolution of $\Ga(t)$ with respect to $\Bn_t$.
In addition, the boundary condition \eqref{DNbc} should be replaced by
the free boundary condition:
\begin{equation*}
(\BT-P(\rho)\BI)\Bn_t = -P_0\Bn_t \quad \text{on $\Ga(t)$,} \quad
\Bn_t\cdot\nabla\rho =0 \quad \text{on $\Ga(t)$,}
\end{equation*}
where $P_0$ is a given constant that is specified below. 
Thus the free boundary value problem is formulated as follows:
\begin{equation*}
\left\{\begin{aligned}
&\pa_t\rho + \di(\rho\Bu) =0 && \text{in $\Om(t)$, $t>0$}, \\
&\rho(\pa_t\Bu+\Bu\cdot\nabla\Bu) =\Di(\BS(\Bu)+\BK(\rho)-P(\rho)\BI) && \text{in $\Om(t)$, $t>0$,}  \\ 
&(\BS(\Bu)+\BK(\rho)-P(\rho)\BI)\Bn_t = -P_0\Bn_t && \text{on $\Ga(t)$, $t>0$,} \\
&\Bn_t\cdot\nabla\rho = 0 && \text{on $\Ga(t)$, $t>0$,} \\
&(\rho,\Bu)|_{t=0} = (\rho_0,\Bu_0)&& \text{in $\Om_0$},
\end{aligned}\right.
\end{equation*}
subject to $V_t=\Bu\cdot\Bn_t$ on $\Ga(t)$.
Here $\Om_0$ is the given initial domain of $\BR^N$.

Let $\rho_0(x)=\wt\rho_0(x)+\rho_\infty$ for some positive constant $\rho_\infty$.
Replacing $\rho$ by $\rho+\rho_\infty$ in the above system yields 
\begin{equation*}
\left\{\begin{aligned}
&\pa_t\rho + \di((\rho+\rho_\infty)\Bu) =0 && \text{in $\Om(t)$, $t>0$,} \\
&(\rho+\rho_\infty)(\pa_t\Bu+\Bu\cdot\nabla\Bu) \\
&\quad=\Di(\BS(\Bu)+\BK(\rho+\rho_\infty)-P(\rho+\rho_\infty)\BI) && \text{in $\Om(t)$, $t>0$,} \\ 
&(\BS(\Bu)+\BK(\rho+\rho_\infty)-P(\rho+\rho_\infty)\BI)\Bn_t 
= -P(\rho_\infty)\Bn_t && \text{on $\Ga(t)$, $t>0$,} \\
&\Bn_t\cdot\nabla\rho =0 && \text{on $\Ga(t)$, $t>0$,} \\
&(\rho,\Bu)|_{t=0} = (\wt\rho_0,\Bu_0) && \text{in $\Om_0$},
\end{aligned}\right.
\end{equation*}
where we have chosen $P_0=P(\rho_\infty)$. 
We transform this system to a problem on the initial domain $\Om_0$
by some transformation (e.g. Lagrangian transformation),
and thus we achieve the following linearized system on $\Om_0$:
\begin{equation}\label{eq:Om_0}
\left\{\begin{aligned}
&\pa_t\rho + \rho_\infty\di\Bu = d && \text{in $\Om_0$, $t>0$,} \\
&\rho_\infty\pa_t\Bu-\Di\{\mu\BD(\Bu)+(\nu-\mu)\di\Bu\BI  \\
&\quad -(P'(\rho_\infty)-\rho_\infty\ka\De)\rho\BI\} = \Bf && \text{in $\Om_0$, $t>0$,} \\
&\{\mu\BD(\Bu)+(\nu-\mu)\di\Bu\BI 
-(P'(\rho_\infty)-\rho_\infty\ka\De)\rho\BI\}\Bn_0 = \Bg && \text{on $\Ga_0$, $t>0$,} \\
&\Bn_0\cdot\nabla\rho = h && \text{on $\Ga_0$, $t>0$,} \\
&(\rho,\Bu)|_{t=0} =(\wt\rho_0,\Bu_0) && \text{in $\Om_0$,}
\end{aligned}\right.
\end{equation}
where $\Ga_0$ is the boundary of $\Om_0$ and $\Bn_0$ is the unit outer normal to $\Ga_0$.
Then the resolvent problem \eqref{resolvent} can be obtained by 
the Laplace transform applied to \eqref{eq:Om_0} for $\Om_0=\BR_+^N$ and $\Ga_0=\BR_0^N$.


Throughout this paper, the letter $C$ denotes generic constants and
$C_{a,b,c,\dots}$ means that the constant depends on the quantities $a,b,c,\dots$.
The values of constants $C$ and $C_{a,b,c,\dots}$ may change from line to line.

\subsection{Main results}
To state our main results precisely, 
we first introduce notation.

Let $\BN$ be the set of all natural numbers and $\BN_0=\BN\cup\{0\}$.
Let $1\leq q \leq \infty$ and $G$ be a domain of $\BR^N$.
Then $L_q(G)$ and $W_q^m(G)$, $m\in\BN$, denote the usual $\BK$-valued Lebesgue spaces on $G$
and the usual $\BK$-valued Sobolev spaces on $G$, respectively, where $\BK=\BR$ or $\BK=\BC$.
We set $W_q^0(G)=L_q(G)$ and denote the norm of $W_q^n(G)$, $n\in\BN_0$, by $\|\cdot\|_{W_q^n(G)}$.
In addition, $C_0^\infty(G)$ is the set of all functions of $C^\infty(G)$ whose supports are compact subsets of $G$,
while $(\Bu,\Bv)_G=\sum_{j=1}^N\int_G u_j(x)v_j(x) \intd x$
for $N$-vector functions $\Bu=(u_1(x),\dots,u_N(x))^\SST$, $\Bv=(v_1(x),\dots,v_N(x))^\SST$.

Let $X$, $Y$ be Banach spaces.
Then $\CL(X,Y)$ is the Banach space of all bounded linear operators from $X$ to $Y$,
and $\CL(X)$ is the abbreviation of $\CL(X,X)$.
For a subset $U$ of $\BC$,
$\Hol(U,\CL(X,Y))$ stands for the set of all $\CL(X,Y)$-valued analytic functions defined on $U$.


We next introduce the definition of the $\CR$-boundedness of operator families.
Let ${\rm sign}(a)$ be the sign function of $a$.

\begin{defi}[$\CR$-boundedness]\label{defi:R}
Let $X$ and $Y$ be Banach spaces,
and let $r_j(u)$ be the Rademacher functions on $[0,1]$, i.e.
\begin{equation*}
r_j(u) = {\rm sign}\sin(2^j\pi u) \quad (j\in\BN, 0\leq u\leq 1).
\end{equation*}

A family of operators $\CT\subset\CL(X,Y)$ is called $\CR$-bounded on $\CL(X,Y)$,
if for some $p\in [1,\infty)$
there exists a positive constant $C$ such that
the following assertion holds true:
For each $m\in\BN$, $\{T_j\}_{j=1}^m\subset \CT$, and $\{f_j\}_{j=1}^m\subset X$,
we have
\begin{equation*}
\left(\int_0^1\Big\|\sum_{j=1}^m r_j(u)T_j f_j\Big\|_Y^p\intd u\right)^{1/p}
\leq C\left(\int_0^1\Big\|\sum_{j=1}^m r_j(u)f_j\Big\|_X^p\intd u\right)^{1/p}.
\end{equation*}
The smallest such $C$ is called $\CR$-bound of $\CT$ on $\CL(X,Y)$
and denoted by $\CR_{\CL(X,Y)}(\CT)$.
\end{defi}

\begin{rema}\label{rema:Kahane}
It is known that $\CT$ is $\CR$-bounded for any
$p\in[1,\infty)$, provided that $\CT$ is $\CR$-bounded for some $p\in[1,\infty)$.
This fact follows from Kahane's inequality (cf. e.g. \cite[Theorem 2.4]{KW04}).
\end{rema}

We treat in this paper a rescaled version of \eqref{resolvent} as follows:
\begin{equation}\label{eq:rescale}
\left\{\begin{aligned}
\la\rho+\di\Bu&=d && \text{in $\BR_+^N$,} \\
\la\Bu-\mu_*\De\Bu-\nu_*\nabla\di\Bu+(\ga_*-\ka_*\De)\nabla\rho&=\Bf && \text{in $\BR_+^N$,} \\
\{\mu_*\BD(\Bu)+(\nu_*-\mu_*)\di\Bu\BI-(\ga_*-\ka_*\De)\rho\BI\}\Bn&=\Bg && \text{on $\BR_0^N$,} \\
\Bn\cdot\nabla\rho &= h && \text{on $\BR_0^N$,}
\end{aligned}\right.
\end{equation}
where we have used \eqref{DivT} and 
replaced in \eqref{resolvent} $\rho$, $\mu_*$, $\nu_*$, and $\ka_*$
by $\rho_*\rho$, $\rho_*\mu_*$, $\rho_*\nu_*$, and $\ka_*/\rho_*$, respectively.
For the right member $(d,\Bf,\Bg,h)$ of \eqref{eq:rescale}, we set
\begin{equation*}
X_q(G) 
= W_q^1(G)\times L_q(G)^N\times W_q^1(G)^N\times W_q^2(G).
\end{equation*}
Let $\BF=(d,\Bf,\Bg,h)\in X_q(G)$, and then
the symbols $\FX_q(G)$, $\CF_\la$ are defined as
\begin{align*}
&\FX_q(G)
=
W_q^1(G)\times L_q(G)^{\CN}, \quad \CN = N+N^2+N+N^2+N+1, \\
&\CF_\la\BF
=(d,\Bf,\nabla\Bg,\la^{1/2}\Bg,\nabla^2 h, \nabla\la^{1/2} h,\la h)
\in\FX_q(G).
\notag
\end{align*}
We also set, for solutions $(\rho,\Bu)$ of \eqref{eq:rescale},
\begin{alignat}{2}\label{ABset}
\FA_q(G) &= L_q(G)^{N^3+N^2}\times W_q^1(G), \quad
&\CS_\la \rho &= (\nabla^3 \rho, \la^{1/2}\nabla^2\rho,\la\rho), \\
\FB_q(G) &= L_q(G)^{N^3+N^2+N}, \quad
&\CT_\la\Bu &= (\nabla^2\Bu,\la^{1/2}\nabla\Bu,\la\Bu).
\notag
\end{alignat}

Let $\eta_*^w$ be a constant given by 
\begin{equation*}
\eta_*^w=\left(\frac{\mu_*+\nu_*}{2\ka_*}\right)^2 -\frac{1}{\ka_*}, \quad
\end{equation*}
and let $\si_*^w\in[0,\pi/2)$ be an angle defined as
\begin{align*}
\si_*^w &= 
\left\{\begin{aligned}
&0 && (\eta_*^w\geq 0), \\
&\arg\left(\frac{\mu_*+\nu_*}{2\ka_*}+i\sqrt{|\eta_*^w|}\right) && (\eta_*^w<0).
\end{aligned}\right.
\end{align*}

Now we state main results of this paper. 

\begin{theo}[Case $\ga_*=0$]\label{theo1}
Let $q\in(1,\infty)$ and $\de>0$.
Assume that $\mu_*$, $\nu_*$, and $\ka_*$ are positive constants
and that
\begin{equation}\label{para_cond}
\eta_*^w\neq 0, \quad \ka_*\neq\mu_*\nu_*.
\end{equation}
Then there exists a constant $\si_*\in(\si_*^w\,\pi/2)$, independent of $q$ and $\de$,
such that  for any $\si\in(\si_*,\pi/2)$
the following assertions hold true:
\begin{enumerate}[$(1)$]
\item
For any $\la\in\Si_{\si,\de}$ there are operators $\CA_0(\la)$ and $\CB_0(\la)$, with
\begin{align*}
\CA_0(\la)
&\in\Hol(\Si_{\si,\de},\CL(\FX_q(\BR_+^N),W_q^3(\BR_+^N))), \\
\CB_0(\la)
&\in\Hol(\Si_{\si,\de},\CL(\FX_q(\BR_+^N),W_q^2(\BR_+^N)^N)),
\end{align*}
such that, for every $\BF=(d,\Bf,\Bg,h)\in X_q(\BR_+^N)$,
$(\rho,\Bu)=(\CA_0(\la)\CF_\la\BF,\CB_0(\la)\CF_\la\BF)$ is a unique solution of \eqref{eq:rescale} for $\ga_*=0$.
\item
There is a positive constant $c(\de,\si)$, depending on at most 
$N$, $q$, $\de$, $\si$, $\mu_*$, $\nu_*$, and $\ka_*$,
such that for $n=0,1$
\begin{align*}
\CR_{\CL(\FX_q(\BR_+^N),\FA_q(\BR_+^N))}
\left(\left\{\left(\la \frac{d}{d\la}\right)^n\left(\CS_\la\CA_0(\la)\right)\mid \la\in\Si_{\si,\de}\right\}\right)
&\leq c(\de,\si), \\
\CR_{\CL(\FX_q(\BR_+^N),\FB_q(\BR_+^N))}
\left(\left\{\left(\la \frac{d}{d\la}\right)^n\left(\CT_\la\CB_0(\la)\right)\mid \la\in\Si_{\si,\de}\right\}\right)
&\leq c(\de,\si).
\end{align*}
\end{enumerate}
\end{theo}

\begin{rema}
We discuss the condition \eqref{para_cond} in more detail in Remark \ref{rema:sol} below.
\end{rema}

\begin{theo}[Case $\ga_*\in(-\infty,\infty)$]\label{theo2}
Let $q\in(1,\infty)$. Assume that
$\mu_*$, $\nu_*$, $\ga_*$, and $\ka_*$ are constants satisfying
\begin{equation*}
\mu_*>0, \quad \nu_*>0, \quad \ga_*\*\in(-\infty,\infty),
\quad \ka_*>0,
\end{equation*}
and also assume that the condition \eqref{para_cond} holds. 
Let $\de=1/2$ and $\si\in(\si_*,\pi/2)$ for $\si_*$ given in Theorem $\ref{theo1}$,
and let $c_0$ be a positive constant defined as $c_0=c(\de,\si)$
for the positive constant $c(\de,\si)$ of Theorem $\ref{theo1}$.
Then there is a constant $\la_0\geq 1$, depending only on $c_0$, $q$, and $\ga_*$,
such that the following assertions hold true:
\begin{enumerate}[$(1)$]
\item
For any $\la\in\Si_{\si,\la_0}$ there are operators $\CA(\la)$ and $\CB(\la)$, with
\begin{align*}
\CA(\la)
&\in\Hol(\Si_{\si,\la_0},\CL(\FX_q(\BR_+^N),W_q^3(\BR_+^N))), \\
\CB(\la)
&\in\Hol(\Si_{\si,\la_0},\CL(\FX_q(\BR_+^N),W_q^2(\BR_+^N)^N)),
\end{align*}
such that, for every $\BF=(d,\Bf,\Bg,h)\in X_q(\BR_+^N)$,
$(\rho,\Bu)=(\CA(\la)\CF_\la\BF,\CB(\la)\CF_\la\BF)$ is a unique solution of \eqref{eq:rescale}.
\item
For $n=0,1$,
\begin{align*}
\CR_{\CL(\FX_q(\BR_+^N),\FA_q(\BR_+^N))}
\left(\left\{\left(\la\frac{d}{d\la}\right)^n\left(\CS_\la\CA(\la)\right)\mid \la\in\Si_{\si,\la_0}\right\}\right)
&\leq 4c_0, \\
\CR_{\CL(\FX_q(\BR_+^N),\FB_q(\BR_+^N))}
\left(\left\{\left(\la\frac{d}{d\la}\right)^n\left(\CT_\la\CB(\la)\right)\mid \la\in\Si_{\si,\la_0}\right\}\right)
&\leq 4c_0.
\end{align*}
\end{enumerate}
\end{theo}

\begin{rema}\label{rema:theo}
\begin{enumerate}[(1)]
\item
Let $\si\in(\si_*,\pi/2)$, $\la\in\Si_{\si,\la_0}$, and $\BF=(d,\Bf,\Bg,h)\in X_q(\BR_+^N)$,
and let $(\rho,\Bu)\in W_q^3(\BR_+^N)\times W_q^2(\BR_+^N)^N$ be the solution of \eqref{eq:rescale}.
We then have by Definition \ref{defi:R} with $m=1$
\begin{equation}\label{170418_1}
\|(\CS_\la\rho,\CT_\la\Bu)\|_{\FA_q(\BR_+^N)\times\FB_q(\BR_+^N)}
\leq 8c_0\|\CF_\la\BF\|_{\FX_q(\BR_+^N)}.
\end{equation}
In addition, we have by \eqref{170418_1} and the first equation of \eqref{eq:rescale}
\begin{equation*}
|\la|^{3/2}\|\rho\|_{L_q(\BR_+^N)}
\leq |\la|^{1/2}\|d\|_{L_q(\BR_+^N)} + 8c_0\|\CF_\la\BF\|_{\FX_q(\BR_+^N)}.
\end{equation*}
\item\label{rema:theo_1}
Let $\Bg=0$ and $h=0$ in \eqref{eq:rescale}.
Then, by \eqref{170418_1},
\begin{align*}
&|\la|\|(\rho,\Bu)\|_{W_q^1(\BR_+^N)\times L_q(\BR_+^N)^N}+\|(\rho,\Bu)\|_{W_q^3(\BR_+^N)\times W_q^2(\BR_+^N)^N} \\
&\leq C_{\la_0}(8c_0)\|(d,\Bf)\|_{W_q^1(\BR_+^N)\times L_q(\BR_+^N)^N} \quad (\la\in\Si_{\si,\la_0})
\end{align*}
for some positive constant $C_{\la_0}$,
which implies that we can construct an analytic $C^0$-semigroup on $W_q^1(\BR_+^N)\times L_q(\BR_+^N)^N$
associated with \eqref{eq:Om_0} for $\Om_0=\BR_+^N$ and $\Ga_0=\BR_0^N$
under constant coefficients $\mu$, $\nu$, and $\ka$.
\item\label{rema:theo_2}
Combining Theorem \ref{theo2} with the operator-valued Fourier multiplier theorem due to Weis \cite[Theorem 3.4]{Weis01},
we can prove the maximal $L_p\text{-}L_q$ regularity $(1<p,q<\infty)$ for \eqref{eq:Om_0} (cf. \cite[Subsection 2.5]{MS17}).
%
%
\end{enumerate}
\end{rema}

This paper is organized as follows:
The next section proves the existence of $\CR$-bounded solution operator families
for the whole space problem with $\ga_*=0$.
In Section \ref{sec:half}, we first reduce \eqref{eq:rescale} with $\ga_*=0$
to the case where $d=0$ and $\Bf=0$
by means of solutions of the whole space problem.
Next we derive representation formulas of solutions of the reduced problem.
Section \ref{sec:tec} introduces some technical lemmas
in order to show the existence of $\CR$-bounded solution operator families
associated with the reduced problem.
Section \ref{sec:proof1} proves Theorem \ref{theo1}
by combining the result of the whole space problem with
$\CR$-bounded solution operator families of the reduced problem
that are constructed by the technical lemmas of Section \ref{sec:tec}
together with the representation formulas obtained in Section \ref{sec:half}.
Section \ref{sec:proof2} shows Theorem \ref{theo2} by Theorem \ref{theo1}
and a perturbation method.

\section{Whole space problem}\label{sec:whole}
In this section, we consider the following whole space problem:
\begin{equation}\label{eq:whole}
\left\{\begin{aligned}
\la\rho + \di\Bu &= d && \text{in $\ws$,} \\
\la\Bu - \mu_*\De\Bu -\nu_*\nabla\di\Bu
-\ka_*\De\nabla\rho &= \Bf && \text{in $\ws$.}
\end{aligned}\right.
\end{equation}
More precisely, we prove

\begin{theo}\label{theo:whole}
Let $q\in(1,\infty)$ and $\de>0$, and set $Y_q(\ws)=W_q^1(\ws)\times L_q(\ws)^N$.
Assume that $\mu_*$, $\nu_*$, and $\ka_*$ are constants satisfying
\begin{equation}\label{170425_1}
\mu_*>0, \quad \mu_*+\nu_*>0, \quad \ka_*>0.
\end{equation}
Then for any $\si\in(\si_*^w,\pi/2)$ the following assertions hold true:
\begin{enumerate}[$(1)$]
\item\label{theo:whole1}
For any $\la\in\Si_{\si,\de}$
there are operators $\CA_1(\la)$, $\CB_1(\la)$, with
\begin{align*}
\CA_1(\la) &\in \Hol(\Si_{\si,\de},\CL(Y_q(\ws),W_q^3(\ws))), \\
\CB_1(\la) &\in \Hol(\Si_{\si,\de},\CL(Y_q(\ws),W_q^2(\ws)^N)),
\end{align*}
such that, for every $\BF=(d,\Bf)\in Y_q(\ws)$,
$(\rho,\Bu)=(\CA_1(\la)\BF,\CB_1(\la)\BF)$ is a unique solution to the system \eqref{eq:whole}.
\item\label{theo:whole2}
There exists a positive constant $c_1$,
depending on at most
$N$, $q$, $\de$, $\si$, $\mu_*$, $\nu_*$, and $\ka_*$, such that for $n=0,1$
\begin{align*}
\CR_{\CL(Y_q(\ws),\FA_q(\ws))}
\left(\left\{\left(\la\frac{d}{d\la}\right)^n\left(\CS_\la \CA_1(\la)\right) \mid \la\in\Si_{\si,\de}\right\}\right)
\leq c_1,  \\
\CR_{\CL(Y_q(\ws), \FB_q(\ws))}
\left(\left\{\left(\la\frac{d}{d\la}\right)^n\left(\CT_\la \CB_1(\la)\right) \mid \la\in\Si_{\si,\de}\right\}\right)
\leq c_1, 
\end{align*}
\end{enumerate}
where $\FA(\ws)$, $\FB(\ws)$, $\CS_\la$, and $\CT_\la$ are given by \eqref{ABset}.
\end{theo}

We devote the remaining part of this section to the proof of Theorem \ref{theo:whole}.

First, we derive representation formulas of solutions of \eqref{eq:whole}.
Let us define the Fourier transform and its inverse transform as
\begin{equation*}
\wh u= \wh u (\xi) = \int_\ws e^{-ix\cdot\xi}\,u(x)\intd x, \quad
\CF_\xi^{-1}[v](x) = \frac{1}{(2\pi)^N}\int_\ws e^{ix\cdot\xi}\,v(\xi)\intd\xi,
\end{equation*} 
respectively.
We then apply the Fourier transform to \eqref{eq:whole}
in order to obtain
\begin{align}
\la\wh{\rho}+i\xi\cdot\wh\Bu&=\wh d,
\label{160909_1} \\
\la\wh \Bu +\mu_*|\xi|^2\wh\Bu-\nu_*i\xi(i\xi\cdot\wh \Bu)
+\ka_*i\xi|\xi|^2\wh\rho &=\wh \Bf.
\label{160909_2}
\end{align}
Inserting \eqref{160909_1} into \eqref{160909_2} furnishes 
\begin{equation}\label{160909_5}
\la^2\wh \Bu +\mu_*\la |\xi|^2\wh\Bu
-i\xi(\nu_*\la +\ka_*|\xi|^2)(i\xi\cdot\wh \Bu) 
= \la \wh \Bf-\ka_*i\xi|\xi|^2\wh d, 
\end{equation}
and the scalar product of this identity and $i\xi$ yields
\begin{equation*}
i\xi\cdot\wh\Bu = P(\xi,\la)^{-1}(\la i\xi\cdot\wh\Bf+\ka_*|\xi|^4\wh d), \quad
P(\xi,\la) = \la^2+(\mu_*+\nu_*)\la|\xi|^2+\ka_*|\xi|^4. \notag
\end{equation*}
We insert these formulas into \eqref{160909_1} and \eqref{160909_5}
in order to obtain
\begin{align*}
\wh\rho
&= \left(\frac{\la+(\mu_*+\nu_*)|\xi|^2}{P(\xi,\la)}\right)\wh d
-\sum_{j=1}^N\frac{i\xi_j}{P(\xi,\la)}\wh f_j, \\
\wh\Bu
&=-\frac{\ka_*i\xi|\xi|^2}{P(\xi,\la)}\wh d
+\frac{1}{\la+\mu_*|\xi|^2}
\left(\wh \Bf-\sum_{j=1}^N \frac{\xi_j\xi(\nu_*\la+\ka_*|\xi|^2)}{P(\xi,\la)}\wh f_j\right).
\end{align*}
Thus we have
\begin{align}
\rho &= \CF_\xi^{-1}\left[\left(\frac{\la+(\mu_*+\nu_*)|\xi|^2}{P(\xi,\la)}\right)\wh d(\xi)\right](x)
-\sum_{j=1}^N\CF_\xi^{-1}
\left[\frac{i\xi_j}{P(\xi,\la)}\wh f_j(\xi)\right](x) \label{160909_3} \\
&=: \CA_1(\la)\BF, \notag \\
\Bu &=- \CF_\xi^{-1}\left[\frac{\ka_*i\xi|\xi|^2}{P(\xi,\la)}\wh{d}(\xi)\right](x)
+\CF_\xi^{-1}\left[\frac{\wh\Bf(\xi)}{\la+\mu_*|\xi|^2}\right](x)
\label{160909_4}  \\
&-\sum_{j=1}^N\CF_\xi^{-1}
\left[\frac{\xi_j\xi(\nu_*\la+\ka_*|\xi|^2)}{(\la+\mu_*|\xi|^2)P(\xi,\la)}\wh f_j(\xi)\right](x)
\notag\\
&=:\CB_1(\la)\BF. \notag
\end{align}

Next we estimate $P(\xi,\la)$.

\begin{lemm}\label{lemm:P}
Assume that \eqref{170425_1} holds.
Then, for any $\si\in(\si_*^w,\pi/2)$, $\la\in\Si_{\si,0}$, and $\xi\in\BR^N$, we have
\begin{equation*}
|P(\xi,\la)|\geq C_{\si,\mu_*,\nu_*,\ka_*}(|\la|^{1/2}+|\xi|)^4
\end{equation*}
with some positive constant $C_{\si,\mu_*,\nu_*,\ka_*}$ independent of $\xi$ and $\la$.
%
%
\end{lemm}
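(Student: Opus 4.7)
The plan is to factor $P(\xi,\la)$ as a quadratic in $\la$ and estimate each linear factor separately. Write
\[
P(\xi,\la)=(\la-\la_+(\xi))(\la-\la_-(\xi)), \qquad \la_\pm(\xi)=\al_\pm |\xi|^2,
\]
where
\[
\al_\pm=\frac{-(\mu_*+\nu_*)\pm\sqrt{(\mu_*+\nu_*)^2-4\ka_*}}{2}.
\]
Note that $\al_+\al_-=\ka_*>0$ and $\al_++\al_-=-(\mu_*+\nu_*)<0$. It therefore suffices to prove the factor-level bound
\[
|\la-\la_\pm(\xi)|\geq C_{\si,\mu_*,\nu_*,\ka_*}\,(|\la|^{1/2}+|\xi|)^2 \qquad (\la\in\Si_{\si,0},\ \xi\in\BR^N),
\]
and multiply.

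The first step is to locate the roots $\al_\pm$. When $\eta_*^w\geq 0$, the discriminant is nonnegative, so both $\al_\pm$ are strictly negative real numbers; in particular $|\arg\al_\pm|=\pi=\pi-\si_*^w$ and $|\al_\pm|>0$. When $\eta_*^w<0$, the roots form a complex conjugate pair with $|\al_\pm|^2=\ka_*$, and a direct computation matching the polar form of $\al_\pm$ against the definition of $\si_*^w$ yields $\arg\al_\pm=\pm(\pi-\si_*^w)$. In either case one has
\[
|\arg\al_\pm|=\pi-\si_*^w, \qquad |\al_\pm|\geq c_*:=\min(|\al_+|,|\al_-|)>0,
\]
with $c_*$ depending only on $\mu_*,\nu_*,\ka_*$.

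Finally, for $\la\in\Si_{\si,0}$ we have $|\arg\la|<\pi-\si<\pi-\si_*^w$, so the angular gap $\te:=|\arg\la-\arg\la_\pm(\xi)|$ satisfies $\te\geq\si-\si_*^w>0$. The law of cosines gives
\[
|\la-\la_\pm|^2=|\la|^2+|\la_\pm|^2-2|\la||\la_\pm|\cos\te \geq |\la|^2+|\la_\pm|^2-2|\la||\la_\pm|\cos(\si-\si_*^w),
\]
and a standard completion of the square shows the right-hand side is bounded below by $c_1(|\la|+|\la_\pm|)^2$ with $c_1=(1-\cos(\si-\si_*^w))/2>0$. Combining this with $|\la_\pm|\geq c_*|\xi|^2$ and the elementary inequality $|\la|+|\xi|^2\geq\tfrac12(|\la|^{1/2}+|\xi|)^2$ delivers the factor-level estimate, and multiplying the two factors completes the proof.

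The only point requiring genuine care is the angular identification of the roots in the complex case: one must verify that the formula for $\si_*^w$ written in the paper really equals $\pi-|\arg\al_\pm|$, since it is precisely this identity that pins down the sharp opening angle $\si_*^w$ in the hypothesis $\si>\si_*^w$. The verification amounts to computing $|\al_\pm|=\sqrt{\ka_*}$ and matching the polar argument with the definition, but it is essential for the sharpness of the sector.
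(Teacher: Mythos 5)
Your proposal is correct and follows essentially the same route as the paper's proof: factor $P(\xi,\la)$ as a quadratic in $\la$, estimate each linear factor $\la-\la_\pm(\xi)$ from below using the angular gap $\ge\si-\si_*^w$ between $\arg\la$ and $\arg\la_\pm$, and multiply. The only cosmetic difference is that you treat the real-root case ($\eta_*^w\ge 0$) by the same law-of-cosines calculation as the complex case, whereas the paper shortcuts that case with the elementary inequality $|\la+a|\ge\sin(\si/2)(|\la|+a)$ for $a\ge0$; both are fine and give the same estimate.
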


\begin{proof}
We rewrite $P(\xi,\la)$ as 
\begin{equation*}
P(\xi,\la)
=\left(\la+\frac{\mu_*+\nu_*}{2}|\xi|^2\right)^2
-\eta_*^w\ka_*^2|\xi|^4,
\end{equation*}
and note that, for any $\si\in(0,\pi/2)$, $\la\in\Si_{\si,0}$, and $a\geq 0$,
\begin{equation}\label{160910_1}
|\la+a|\geq \left(\sin\frac{\si}{2}\right)(|\la|+a).
\end{equation}

First we consider the case $\eta_*^w\geq 0$. Since 
\begin{equation*}
P(\xi,\la) = (\la-\la_+)(\la-\la_-), \quad
\la_\pm = -\ka_*\left(\frac{\mu_*+\nu_*}{2\ka_*}\pm \sqrt{\eta_*^w}\right)|\xi|^2
\end{equation*}
and since $(\mu_*+\nu_*)/(2\ka_*)\pm \sqrt{\eta_*^w}>0$,
it follows from \eqref{160910_1} that for any $\si\in(0,\pi/2)$
\begin{equation}\label{160911_14}
|P(\xi,\la)|\geq C_{\si,\mu_*,\nu_*,\ka_*}(|\la|+|\xi|^2)^2 \quad (\la\in\Si_{\si,0},\,\xi\in\BR^N).
\end{equation}

Next we consider the case $\eta_*^w<0$.
One has
\begin{equation}\label{160911_12}
P(\xi,\la)=(\la-\la_+)(\la-\la_-), \quad \la_\pm = 
-\ka_*\left(\frac{\mu_*+\nu_*}{2\ka_*}\pm i\sqrt{|\eta_*^w|}\right)|\xi|^2.
\end{equation}
%
%
%
%
It then holds that
$\la_\pm = -\ka_*|\xi|^2e^{\pm i\si_*^w}\sqrt{\{(\mu_*+\nu_*)/(2\ka_*)\}^2+|\eta_*^w|}$.
Let $\la=|\la|e^{i\te}$ for $0\leq |\te|\leq \pi-\si$, $\si\in(\si_*^w,\pi/2)$. 
Noting $\overline{\la_\pm}=\la_\mp$, we observe that
\begin{align*}
&|\la-\la_\pm|^2
= (\la-\la_\pm)(\overline{\la}-\overline{\la_\pm}) \\
&=|\la|^2+|\la_\pm|^2 +2\ka_*|\la||\xi|^2\cos(\te\mp \si_*^w)\sqrt{\{(\mu_*+\nu_*)/(2\ka_*)\}^2+|\eta_*^w|}.
\end{align*}
Since the angles satisfy
\begin{equation*}
\begin{aligned}
-\si_*^w\leq \te-\si_*^w \leq \te+\si_*^w \leq \pi-(\si-\si_*^w) & &&\text{when $0\leq \te\leq \pi-\si$,} \\
-(\pi-(\si-\si_*^w))\leq \te-\si_*^w\leq \te+\si_*^w\leq \si_*^w & && \text{when $-(\pi-\si)\leq \te\leq 0$,}
\end{aligned}
\end{equation*}
we have
$\cos(\te\mp\si_*^w) \geq  \cos\{\pi-(\si-\si_*^w)\}=-\cos(\si-\si_*^w)$.
Combining this inequality with the last identity furnishes that
\begin{align*}
|\la-\la_\pm|^2 
&\geq |\la|^2+\ka_*^2|\xi|^4\left\{\left(\frac{\mu_*+\nu_*}{2\ka_*}\right)^2+|\eta_*^w|\right\} \\
&-2\ka_*|\la||\xi|^2\cos(\si-\si_*^w)\sqrt{\left(\frac{\mu_*+\nu_*}{2\ka_*}\right)^2+|\eta_*^w|} \\
&\geq (1-\cos(\si-\si_*^w))\left[|\la|^2+\ka_*^2|\xi|^4\left\{\left(\frac{\mu_*+\nu_*}{2\ka_*}\right)^2+|\eta_*^w|\right\}\right].
\end{align*}
Thus, by \eqref{160911_12},
we obtain for any $\si\in(\si_*^w,\pi/2)$
\begin{equation*}
|P(\xi,\la)|\geq C_{\si,\mu_*,\nu_*,\ka_*}(|\la|+|\xi|^2)^2 \quad (\la\in\Si_{\si,0},\,\xi\in\BR^N),
\end{equation*}
which, combined with \eqref{160911_14},
completes the proof of Lemma \ref{lemm:P}.
%
%
\end{proof}

\begin{coro}\label{coro:P}
Assume that \eqref{170425_1} holds.
Then, for any $\si\in(\si_*^w,\pi/2)$ and  any multi-index $\al\in\BN_0^N$,
there is a positive constant $C_{\al,\si,\mu_*,\nu_*,\ka_*}$ such that,
for any $\la\in\Si_{\si,0}$ and $\xi\in\BR^N$,
\begin{equation*}
\left|\pa_\xi^\al \left\{\la^n(d/d\la)^n P(\xi,\la)^{-1}\right\}\right|
\leq C_{\al,\si,\mu_*,\nu_*,\ka_*}(|\la|^{1/2}+|\xi|)^{-4-|\al|} \quad (n=0,1).
\end{equation*}
%
%
\end{coro}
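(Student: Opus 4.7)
The plan is to combine the sharp lower bound on $|P(\xi,\la)|$ from Lemma~\ref{lemm:P} with a matching polynomial-type upper bound on all $\xi$-derivatives of $P$ (and of $\la\,\pa_\la P$), and then assemble the result by Faà di Bruno. Set $M=M(\xi,\la)=|\la|^{1/2}+|\xi|$. First I would verify the homogeneity-type estimate
\begin{equation*}
|\pa_\xi^\beta P(\xi,\la)|\leq C_\beta\, M^{4-|\beta|}\qquad(\xi\in\BR^N,\ \la\in\Si_{\si,0})
\end{equation*}
for every multi-index $\beta\in\BN_0^N$. This is immediate by inspection of $P(\xi,\la)=\la^2+(\mu_*+\nu_*)\la|\xi|^2+\ka_*|\xi|^4$: each monomial appearing after differentiation is of the form $\la^a|\xi|^{b}$ (times a $\xi_j$-factor of degree $\leq b$) with $2a+b=4-|\beta|$, whence $|\la|^a|\xi|^b\leq M^{2a+b}=M^{4-|\beta|}$; for $|\beta|>4$ the derivative vanishes, so the bound is trivial.

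For the case $n=1$, note that $\la(d/d\la)P^{-1}=-\la(\pa_\la P)P^{-2}$ and $\la\,\pa_\la P=2\la^2+(\mu_*+\nu_*)\la|\xi|^2$. Every $\xi$-monomial in $\pa_\xi^\beta(\la\,\pa_\la P)$ again satisfies $2a+b=4-|\beta|$, so the same estimate
\begin{equation*}
|\pa_\xi^\beta(\la\,\pa_\la P)|\leq C_\beta\, M^{4-|\beta|}
\end{equation*}
holds. I would then apply the Faà di Bruno formula to $P^{-1}=F\circ P$ with $F(z)=1/z$ (resp.\ Leibniz plus Faà di Bruno to $(\la\pa_\la P)\,P^{-2}$), writing
\begin{equation*}
\pa_\xi^\al P^{-1}=\sum_{k=1}^{|\al|}\sum_{\substack{\al_1+\cdots+\al_k=\al\\ |\al_j|\geq 1}} c_{\al,k,\al_j}\,P^{-(k+1)}\prod_{j=1}^k\pa_\xi^{\al_j}P,
\end{equation*}
and analogously for the $n=1$ case with a prefactor $\pa_\xi^\gamma(\la\,\pa_\la P)$ and a power $P^{-(k+2)}$ coming from Leibniz combined with Faà di Bruno on $P^{-2}$. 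Invoking Lemma~\ref{lemm:P}, $|P^{-(k+1)}|\leq C\,M^{-4(k+1)}$, and using the derivative bounds above, the typical term is controlled by
\begin{equation*}
M^{-4(k+1)}\prod_{j=1}^k M^{4-|\al_j|}=M^{-4(k+1)+4k-|\al|}=M^{-4-|\al|},
\end{equation*}
which is exactly the claim; the $n=1$ calculation gives the same exponent, since one loses an extra $M^{-4}$ from the higher power of $P$ but gains $M^{4-|\gamma|}$ from the new factor and the remaining derivatives distribute as before.

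The proof is essentially bookkeeping, so I do not expect a conceptual obstacle; the one place where care is needed is checking that the $\la$- and $\xi$-degrees really balance in every monomial of $\pa_\xi^\beta P$ and of $\pa_\xi^\beta(\la\,\pa_\la P)$ so that the single parameter $M=|\la|^{1/2}+|\xi|$ controls them uniformly, which is why the rescaling $\la\leftrightarrow |\xi|^2$ built into $M$ is essential. Once that point is granted, the Faà di Bruno assembly produces $C_{\al,\si,\mu_*,\nu_*,\ka_*}M^{-4-|\al|}$ cleanly for both $n=0$ and $n=1$.
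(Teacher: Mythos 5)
Your proof is correct and follows essentially the same route as the paper: you establish the upper bound $|\pa_\xi^\beta P|\leq C M^{4-|\beta|}$ (and the analogue for $\la\pa_\la P$), combine it with the lower bound from Lemma~\ref{lemm:P}, and assemble via Faà di Bruno (which is what the paper calls Bell's formula) plus Leibniz for $n=1$. The only cosmetic difference is that the paper proves an intermediate estimate for general powers $P^s$ and then specializes to $s=-1$ and $s=-2$, whereas you target those powers directly.
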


\begin{proof}
To prove the corollary,
we use Bell's formula for derivatives of the composite function of $f(t)$ and $t=g(\xi)$
as follows: For any multi-index $\al\in\BN_0^N$,
\begin{equation}\label{Bell}
\pa_\xi^\al f(g(\xi))=\sum_{k=1}^{|\al|} f^{(k)}(g(\xi))
\sum_{\stackrel{\text{\scriptsize{$\al_1+\dots+\al_k=\al$,}}}{|\al_i|\geq 1}}
\Ga_{\al_1,\dots,\al_k}^\al(\pa_\xi^{\al_1}g(\xi))\dots(\pa_\xi^{\al_k}g(\xi))
\end{equation}
with suitable coefficients $\Ga_{\al_1,\dots,\al_k}^\al$,
where $f^{(k)}(t)$ is the $k$th derivative of $f(t)$.

Let $\al$ be any multi-index of $\BN_0^N$ in this proof.
Since 
\begin{equation}\label{170418_7}
|\pa_\xi^\al|\xi|^2|
\leq 
\left\{\begin{aligned}
&2|\xi| && (|\al|=1), \\
&2 && (|\al|=2), \\
&0 && (|\al|\geq 3),
\end{aligned}\right. 
\end{equation}
 we have by Leibniz's rule
$|\pa_\xi^\al|\xi|^4|\leq C_\al|\xi|^{4-|\al|}$ when $|\al|\leq 4$
and  $\pa_\xi^\al|\xi|^4=0$ when $|\al|\geq 5$.
These inequalities yield 
\begin{equation}\label{170418_5}
\left|\pa_\xi^\al\left\{\la^n(d/d\la)^n
P(\xi,\la)\right\}\right| \leq C_{\al,\mu_*,\nu_*,\ka_*}(|\la|^{1/2}+|\xi|)^{4-|\al|} \quad (n=0,1)
\end{equation}
for any $(\xi,\la)\in\BR^N\times \Si_{\si,0}$.
We thus see that by \eqref{170418_5} with $n=0$ and by \eqref{Bell} with $f(t)=t^s$ $(s\in\BR)$ and $t=g(\xi)=P(\xi,\la)$
\begin{align*}
|\pa_\xi^\al P(\xi,\la)^s|
&\leq C_{s,\al,\mu_*,\nu_*,\ka_*}\sum_{k=1}^{|\al|}|P(\xi,\la)|^{s-k}(|\la|^{1/2}+|\xi|)^{4k-|\al|},
\end{align*}
which, combined with Lemma \ref{lemm:P} when $s-k<0$
and with \eqref{170418_5} for $(\al,n)=(0,0)$ when $s-k\geq 0$, furnishes 
\begin{equation}\label{170418_9}
|\pa_\xi^\al P(\xi,\la)^s| \leq C_{s,\al,\si,\mu_*,\nu_*,\ka_*}(|\la|^{1/2}+|\xi|)^{4s-|\al|}
\end{equation}
for any $(\xi,\la)\in \BR^N\times\Si_{\si,0}$.
Setting $s=-1$ in \eqref{170418_9} especially implies
the required estimate of Corollary \ref{coro:P}  for $n=0$.
Noting that
\begin{equation*}
\la\frac{d}{d\la} P(\xi,\la)^{-1}=-P(\xi,\la)^{-2}\left(\la\frac{d}{d\la} P(\xi,\la)\right),
\end{equation*}
we also have the required estimate of Corollary \ref{coro:P}  for $n=1$
by Leibniz's rule together with \eqref{170418_5} for $n=1$ and \eqref{170418_9} for $s=-2$.
This completes the proof of the corollary.
\end{proof}

For $k,l,m=1,\dots,N$ we have, by \eqref{160909_3} and \eqref{160909_4},
\begin{align}
&\pa_k\pa_l\pa_m\CA_1(\la)\BF
= -\CF_\xi^{-1}\left[\frac{i\xi_k\xi_l\xi_m\{\la+(\mu_*+\nu_*)|\xi|^2\}}{P(\xi,\la)}\wh d(\xi)\right](x)
\label{160930_20} \\
& \quad-\sum_{j=1}^N\CF_\xi^{-1}\left[\frac{\xi_j\xi_k\xi_l\xi_m }{P(\xi,\la)}\wh f_j(\xi)\right](x),
\notag \\
&\la^{1/2}\pa_k\pa_l\CA_1(\la)\BF
= -\CF_\xi^{-1}\left[\frac{\xi_k\xi_l\la^{1/2}\{\la+(\mu_*+\nu_*)|\xi|^2\}}{P(\xi,\la)}\wh d(\xi)\right](x)
\label{160930_21} \\
& \quad+\sum_{j=1}^N\CF_\xi^{-1}\left[\frac{i\xi_j\xi_k\xi_l\la^{1/2}}{P(\xi,\la)}\wh f_j(\xi)\right](x),
\notag  \\
&\la\CA_1(\la)\BF =\CF_\xi^{-1}\left[\frac{\la\{\la+(\mu_*+\nu_*)|\xi|^2\}}{P(\xi,\la)}\wh d(\xi)\right](x)
\label{160930_22} \\
& \quad -\sum_{j=1}^N\CF_\xi^{-1}\left[\frac{i\xi_j\la}{P(\xi,\la)}\wh f_j(\xi)\right](x),
\notag \\
&\big(\pa_k\pa_l,\la^{1/2}\pa_k,\la\big)\CB_1(\la)\BF \label{160930_23} \\
&\quad=-\CF_\xi^{-1}\left[\left(-\xi_k\xi_l,i\xi_k\la^{1/2},\la\right)
\frac{\ka_*i\xi|\xi|^2}{P(\xi,\la)}\wh d(\xi)\right](x) 
\notag \\
&\quad+\CF_\xi^{-1}\left[\left(-\xi_k\xi_l,i\xi_k\la^{1/2},\la\right)
\frac{\wh\Bf(\xi)}{\la+\mu_*|\xi|^2}\right](x)
\notag \\
&\quad-\sum_{j=1}^N\CF_\xi^{-1}
\left[\left(-\xi_k\xi_l,i\xi_k\la^{1/2},\la\right)
\frac{\xi_j\xi(\nu_*\la+\ka_*|\xi|^2)}{(\la+\mu_*|\xi|^2)P(\xi,\la)}\wh f_j(\xi)\right](x).
\notag
\end{align}
%
%
%
%
To estimate these terms, we introduce the following two lemmas.
The first one was proved by \cite[Theorem 3.3]{ES13}.

\begin{lemm}\label{lemm:ES13}
Let $q\in(1,\infty)$ and $\La$ be a subset of $\BC$. 
Assume that $t(\xi,\la)$ is a function defined on $(\ws\setminus\{0\})\times\La$
such that for any multi-index $\al\in\BN_0^N$
there exists a positive constant $M_{\al,\La}$, depending on $\al$ and $\La$,
such that
\begin{equation*}
|\pa_\xi^\al  t(\xi,\la)| \leq M_{\al,\La} |\xi|^{-|\al|}
\end{equation*}
for any $(\xi,\la)\in (\ws\setminus\{0\})\times\La$.
Let $T(\la)$ be an operator defined by $[T(\la)f](x) = \CF_{\xi}^{-1}[t(\xi,\la)\wh f(\xi)](x)$.
Then the set $\{T(\la) \mid \la\in\La\}$ is $\CR$-bounded on $\CL(L_q(\ws))$ and
\begin{equation*}
\CR_{\CL(L_q(\ws))}(\{T(\la) \mid \la\in\La\}) \leq C_{N,q}\max_{|\al|\leq N+1}M_{\al,\La},
\end{equation*}
with some positive constant $C_{N,q}$ that depends solely on $N$ and $q$.
\end{lemm}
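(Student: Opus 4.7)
The plan is to reduce the $\CR$-boundedness of $\{T(\la)\mid\la\in\La\}$ to a classical square-function estimate, which in turn is an instance of the operator-valued Mikhlin multiplier theorem on $L_q(\BR^N;\ell^2)$. By Remark \ref{rema:Kahane} we may take $p=q$ in Definition \ref{defi:R}. Fix arbitrary $\{\la_j\}_{j=1}^m\subset\La$ and $\{f_j\}_{j=1}^m\subset L_q(\BR^N)$. First I would apply Khintchine's inequality pointwise in $x$ together with Fubini's theorem to obtain
\begin{equation*}
\left(\int_0^1\Big\|\sum_{j=1}^m r_j(u)T(\la_j)f_j\Big\|_{L_q(\BR^N)}^q\intd u\right)^{1/q}
\sim
\Big\|\Big(\sum_{j=1}^m|T(\la_j)f_j|^2\Big)^{1/2}\Big\|_{L_q(\BR^N)},
\end{equation*}
and likewise for the $\{f_j\}$ side, with equivalence constants depending only on $q$. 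The desired $\CR$-bound is therefore equivalent to a square-function inequality with constant $C_{N,q}M$, where $M:=\max_{|\al|\leq N+1}M_{\al,\La}$.

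Next I would recast the square-function inequality as a single Fourier multiplier estimate with values in $\ell^2$. Introduce the diagonal operator-valued symbol
\begin{equation*}
\tau(\xi)\in\CL(\ell^2),\qquad \tau(\xi)(a_j)_{j\in\BN}:=(t(\xi,\la_j)a_j)_{j\leq m,\ 0\text{ otherwise}},
\end{equation*}
and view $F:=(f_1,\dots,f_m,0,0,\dots)$ as an element of $L_q(\BR^N;\ell^2)$. The multiplier operator with symbol $\tau$ has $j$-th component $T(\la_j)f_j$ for $j\leq m$, so bounding its $L_q(\BR^N;\ell^2)$-norm by $\|F\|_{L_q(\BR^N;\ell^2)}$ is exactly what is required. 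Because $\tau(\xi)$ is diagonal, the $\ell^2$-operator norm of $\pa_\xi^\al\tau(\xi)$ equals $\sup_{j\leq m}|\pa_\xi^\al t(\xi,\la_j)|$, so the hypothesis yields
\begin{equation*}
\|\pa_\xi^\al\tau(\xi)\|_{\CL(\ell^2)}\leq M_{\al,\La}|\xi|^{-|\al|}\qquad(|\al|\leq N+1,\ \xi\in\BR^N\setminus\{0\}),
\end{equation*}
uniformly in $m$ and in the particular choice of $\la_j\in\La$. Thus $\tau$ satisfies the pointwise Mikhlin conditions of order $N+1$ with Mikhlin constant $M$.

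The heart of the argument is the operator-valued Mikhlin theorem for Hilbert-space targets: for $1<q<\infty$ and any Hilbert space $H$, a symbol $\tau:\BR^N\setminus\{0\}\to\CL(H)$ with $\sup_{\xi\neq 0}|\xi|^{|\al|}\|\pa_\xi^\al\tau(\xi)\|_{\CL(H)}\leq K$ for $|\al|\leq N+1$ defines a bounded Fourier multiplier on $L_q(\BR^N;H)$ with operator norm at most $C_{N,q}K$. This is the classical Hilbert-space-valued Mikhlin--H\"ormander theorem, proved by Calder\'on--Zygmund and Littlewood--Paley techniques whose constants are independent of $\dim H$ (since $H$ is UMD with unconditional basis). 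Applying it with $H=\ell^2$ and $K=M$ yields the square-function estimate, which after undoing the Khintchine/Fubini step produces the announced $\CR$-bound $C_{N,q}M$.

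The main subtlety, and the reason the statement is nontrivial, is obtaining a constant that is \emph{independent of $m$} and of the particular sample $\{\la_j\}\subset\La$; this is precisely what rules out a naive pointwise Mikhlin estimate applied to each $T(\la_j)$ separately, and what forces the use of the Hilbert-space-valued (rather than merely scalar or finite-dimensional) Mikhlin theorem. Once that theorem is invoked, the remainder is bookkeeping: Khintchine's inequality passes between Rademacher sums and square functions, Fubini interchanges scalar- and $\ell^2$-valued $L_q$ norms, and the diagonal structure of $\tau$ transfers derivative bounds on $t$ to operator-norm bounds on $\tau$ with no loss.
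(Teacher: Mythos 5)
Your argument is correct, but note that the paper does not prove this lemma at all: it is quoted verbatim from Enomoto--Shibata and justified only by the citation \cite[Theorem 3.3]{ES13}, so there is no in-paper proof to compare against. What you have written is essentially the standard (and, up to presentation, the expected) proof of that cited result: Kahane's inequality to fix $p=q$, Khintchine plus Fubini to convert the Rademacher averages into square functions, the identification of the square-function bound with an $L_q(\BR^N;\ell^2)$ multiplier estimate for the diagonal symbol $\tau(\xi)=\mathrm{diag}(t(\xi,\la_j))$, and the Hilbert-space-valued Mikhlin--H\"ormander theorem with constants depending only on $N$, $q$ and the Mikhlin constant, hence uniform in $m$ and in the sample $\{\la_j\}\subset\La$. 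The one point where care is genuinely needed is the step you state as "the operator-valued Mikhlin theorem for Hilbert-space targets": for a general UMD-valued target the operator-valued multiplier theorem requires $\CR$-bounds, not operator-norm bounds, on $\{|\xi|^{|\al|}\pa_\xi^\al\tau(\xi)\}$; your argument is saved precisely because the target is the Hilbert space $\ell^2$, where $\CR$-boundedness coincides with uniform boundedness, so the diagonal structure and the scalar hypothesis $|\pa_\xi^\al t(\xi,\la)|\leq M_{\al,\La}|\xi|^{-|\al|}$ suffice (alternatively one can argue via vector-valued Calder\'on--Zygmund theory, which is how the classical Hilbert-space-valued Mikhlin theorem is proved, cf. \cite{KW04}). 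With that justification made explicit, your proof is complete and self-contained, whereas the paper simply outsources the statement to \cite{ES13}.
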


\begin{lemm}\label{lemm:whole}
Let $q\in(1,\infty)$, $\de>0$, and $\si\in(0,\pi/2)$. 
Assume that $k(\xi,\la)$, $l(\xi,\la)$, and $m(\xi,\la)$ are functions on $(\BR^N\setminus\{0\})\times \Si_{\si,0}$
such that for any multi-index $\al\in\BN_0^N$
there exists a positive constant $M_{\al,\si}$ such that
\begin{align*}
&|\pa_\xi^\al k(\xi,\la)| \leq M_{\al,\si}|\xi|^{1-|\al|}, \quad
|\pa_\xi^{\al}l(\xi,\la)| \leq M_{\al,\si}|\xi|^{-|\al|}, \\
&|\pa_\xi^\al m(\xi,\la)| \leq M_{\al,\si}(|\la|^{1/2}+|\xi|)^{-1}|\xi|^{-|\al|},
\end{align*}
for any $(\xi,\la)\in(\BR^N\setminus\{0\})\times \Si_{\si,0}$.
Let $K(\la)$, $L(\la)$, $M(\la)$ be operators given by
\begin{alignat*}{2}
[K(\la) f](x) &= \CF_{\xi}^{-1}[k(\xi,\la)\wh f(\xi)](x) && \quad (\la\in\Si_{\si,0}), \\
[L(\la) f](x) &= \CF_{\xi}^{-1}[l(\xi,\la)\wh f(\xi)](x) && \quad (\la\in\Si_{\si,0}), \\
[M(\la) f](x) &=\CF_{\xi}^{-1}[m(\xi,\la)\wh f(\xi)](x) && \quad (\la\in\Si_{\si,\de}).
\end{alignat*}
Then the following assertions hold true:
\begin{enumerate}[$(1)$]
\item\label{lemm:whole1}
The set $\{K(\la) \mid \la\in\Si_{\si,0}\}$ is $\CR$-bounded on
$\CL(W_q^1(\ws),L_q(\ws))$, and there exists a positive constant $C_{N,q}$ such that
\begin{equation*}
\CR_{\CL(W_q^1(\ws),L_q(\ws))}
\left(\left\{K(\la) \mid \la\in\Si_{\si,0}\right\}\right)
\leq C_{N,q}\max_{|\al|\leq N+1} M_{\al,\si}.
\end{equation*}
\item\label{lemm:whole2}
Let $n=0,1$.
Then the set $\{L(\la) \mid \la\in\Si_{\si,0}\}$ is $\CR$-bounded on $\CL(W_q^n(\ws))$,
and there exists a positive constant $C_{N,q}$ such that
\begin{equation*}
\CR_{\CL(W_q^n(\ws))}
\left(\left\{L(\la) \mid \la\in\Si_{\si,0}\right\}\right)
\leq C_{N,q}\max_{|\al|\leq N+1} M_{\al,\si}.
\end{equation*}
\item\label{lemm:whole3}
The set $\{M(\la) \mid \la\in\Si_{\si,\de}\}$ is $\CR$-bounded on
$\CL(L_q(\ws),W_q^1(\ws))$, and there exists a positive constant $C_{N,q,\de}$ such that
\begin{equation*}
\CR_{\CL(L_q(\ws),W_q^1(\ws))}
\left(\left\{M(\la) \mid \la\in\Si_{\si,\de}\right\}\right) \leq C_{N,q,\de}\max_{|\al|\leq N+1} M_{\al,\si}.
\end{equation*}
\end{enumerate}
\end{lemm}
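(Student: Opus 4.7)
The plan is to reduce each of the three assertions to Lemma \ref{lemm:ES13}, which provides $\CR$-boundedness on $\CL(L_q(\ws))$ for any Fourier multiplier whose symbol is of Mikhlin order zero uniformly in the parameter. For each operator I would rewrite (or estimate) the symbol so that, after stripping off factors of $i\xi_j$ that can be absorbed into derivatives of $f$, what remains satisfies exactly the hypothesis $|\pa_\xi^\al t(\xi,\la)|\leq M'_{\al,\La}|\xi|^{-|\al|}$ of Lemma \ref{lemm:ES13}. The identity $(\pa_j f)\widehat{\phantom{x}}(\xi)=i\xi_j\wh f(\xi)$ together with the standard fact that $\CR$-bounds are preserved under finite sums and under composition with a fixed bounded operator will then carry the conclusion from $L_q$ into $W_q^1$ (or vice versa).

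For assertion \eqref{lemm:whole1}, I would use $|\xi|^2=\sum_{j=1}^N\xi_j^2$ to decompose
\begin{equation*}
k(\xi,\la)\wh f(\xi)
=\sum_{j=1}^N k_j(\xi,\la)\bigl(i\xi_j\wh f(\xi)\bigr),
\qquad
k_j(\xi,\la):=\frac{-i\xi_j}{|\xi|^2}\,k(\xi,\la).
\end{equation*}
Since $|\pa_\xi^\al(\xi_j/|\xi|^2)|\leq C_\al|\xi|^{-1-|\al|}$ and $k$ satisfies an order-one bound, Leibniz's rule yields $|\pa_\xi^\al k_j(\xi,\la)|\leq C_{\al,N}M_{\al,\si}|\xi|^{-|\al|}$ uniformly for $\la\in\Si_{\si,0}$. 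Lemma \ref{lemm:ES13} then gives $\CR$-boundedness on $\CL(L_q(\ws))$ of the multiplier $T_j(\la)$ with symbol $k_j$, and $K(\la)=\sum_j T_j(\la)\circ\pa_j$ is the composition of these with the fixed bounded operators $\pa_j\colon W_q^1(\ws)\to L_q(\ws)$. For assertion \eqref{lemm:whole2}, the case $n=0$ is a direct application of Lemma \ref{lemm:ES13}; the case $n=1$ follows because differentiation is itself a Fourier multiplier and therefore commutes with $L(\la)$, so the same $\CR$-bound on $\CL(L_q(\ws))$ gives one on $\CL(W_q^1(\ws))$.

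For assertion \eqref{lemm:whole3}, I would show separately that $M(\la)$ and each $\pa_j M(\la)$ are $\CR$-bounded on $\CL(L_q(\ws))$, uniformly for $\la\in\Si_{\si,\de}$. The symbol of $\pa_j M(\la)$ is $i\xi_j m(\xi,\la)$, and Leibniz combined with the assumed bound on $m$ yields $|\pa_\xi^\al(i\xi_j m)|\leq C_\al M_{\al,\si}|\xi|^{-|\al|}$; the symbol $m$ itself obeys $|\pa_\xi^\al m|\leq M_{\al,\si}\de^{-1/2}|\xi|^{-|\al|}$ whenever $|\la|\geq\de$. Both reformulated symbols are therefore of Mikhlin order zero, and Lemma \ref{lemm:ES13} applies. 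There is no real technical obstacle here beyond bookkeeping: the substantive harmonic-analytic content has already been absorbed into Lemma \ref{lemm:ES13}, and the work is limited to verifying that each Leibniz-type computation produces order-zero estimates with constants independent of $\la$ (up to the benign factor $\de^{-1/2}$ in part \eqref{lemm:whole3}).
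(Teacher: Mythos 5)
Your proposal is correct and takes essentially the same approach as the paper: in each part you reduce to Lemma \ref{lemm:ES13} by writing $|\xi|^{-2}\sum_j(i\xi_j)^2=-1$ to move a factor of $\xi_j$ onto $\wh f$ in (1), exploiting that $L(\la)$ commutes with $\pa_j$ in (2), and controlling $m$ and $i\xi_j m$ separately (using $(|\la|^{1/2}+|\xi|)^{-1}\leq\de^{-1/2}$) in (3). The paper carries out the closing Rademacher computation explicitly, whereas you invoke the standard preservation of $\CR$-bounds under sums and composition with fixed bounded operators; this is the same content.
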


\begin{proof}
Let $\al$ be any multi-index of $\BN_0^N$ in this proof.

\eqref{lemm:whole1}.
By using $1=|\xi|^2/|\xi|^2=-\sum_{j=1}^N(i\xi_j)^2/|\xi|^2$,
we write $K(\la) f$ as
\begin{equation*}
[K(\la) f](x)
=-\sum_{j=1}^N \CF_\xi^{-1}\left[\frac{i\xi_j k(\xi,\la)}{|\xi|^2}\wh{\pa_j f}(\xi)\right](x)
=:[\wt K(\la)\nabla f](x).
\end{equation*}
Since $|\pa_\xi^\al|\xi|^2|\leq 2|\xi|^{2-|\al|}$
for any $\xi\in\BR^N\setminus\{0\}$ by \eqref{170418_7},
we observe by \eqref{Bell} with $f(t)=t^{s/2}$ $(s\in\BR)$ and $t=g(\xi)=|\xi|^2$ that
$|\pa_\xi^\al|\xi|^s|\leq C_{s,\al}|\xi|^{s-|\al|}$ for any $\xi\in\BR^N\setminus\{0\}$.
Setting $s=-2$ in this inequality yields, together with Leibniz's rule and the assumption for $k(\xi,\la)$, that 
\begin{equation*}
\left|\pa_\xi^\al\left(\frac{i\xi_j k(\xi,\la)}{|\xi|^2}\right)\right|
\leq C_\al\left(\max_{|\beta|\leq |\al|}M_{\beta,\si}\right)|\xi|^{-|\al|}
\end{equation*}
for any $(\xi,\la)\in(\BR^N\setminus\{0\})\times\Si_{\si,0}$.
Thus, by Lemma \ref{lemm:ES13} and Definition \ref{defi:R},
we have
for any $m\in\BN$, $\{\la_j\}_{j=1}^m\subset\Si_{\si,0}$, and $\{\Bf_j\}_{j=1}^m\subset L_q(\ws)^N$
\begin{align*}
&\int_0^1\Big\|\sum_{j=1}^m r_j(u) K(\la_j) f_j\Big\|_{L_q(\ws)}^q\intd u
=\int_0^1\Big\|\sum_{j=1}^m r_j(u) \wt K(\la_j) \nabla  f_j\Big\|_{L_q(\ws)}^q\intd u \\
&\leq \left(C_{N,q}\max_{|\al|\leq N+1}M_{\al,\si}\right)^q
\int_0^1\Big\|\sum_{j=1}^m r_j(u)\nabla f_j\Big\|_{L_q(\ws)}^q\intd u \\
&\leq \left(C_{N,q}\max_{|\al|\leq N+1}M_{\al,\si}\right)^q\int_0^1\Big\|\sum_{j=1}^m r_j(u) f_j\Big\|_{W_q^1(\ws)}^q\intd u,
\end{align*}
which completes the proof of Lemma \ref{lemm:whole} \eqref{lemm:whole1}.

\eqref{lemm:whole2}.
The case $n=0$ was already proved in Lemma \ref{lemm:ES13}.
The proof of $n=1$ is similar to \eqref{lemm:whole1},
so that we may omit the detailed proof.

\eqref{lemm:whole3}.
By Leibniz's rule and the assumption for $m(\xi,\la)$,
\begin{align*}
&|\pa_\xi^\al m(\xi,\la)|\leq M_{\al,\si}\de^{-1/2}|\xi|^{-|\al|}, \\
&|\pa_\xi^\al\{i\xi_j m(\xi,\la)\}| \leq C_\al\left(\max_{|\beta|\leq |\al|} M_{\beta,\si}\right)|\xi|^{-|\al|}
\quad (j=1,\dots,N),
\end{align*}
for any $(\xi,\la)\in(\BR^N\setminus\{0\})\times \Si_{\si,\de}$. 
We then observe by Lemma \ref{lemm:ES13} that
$\{M(\la)\mid\la\in\Si_{\si,\de}\}$ and
$\{\pa_j M(\la)\mid\la\in\Si_{\si,\de}\}$ are $\CR$-bounded on $\CL(L_q(\BR^N))$
and their $\CR$-bounds are bounded above by $C_{N,q,\de}\max_{|\al|\leq N+1}M_{\al,\si}$
for some positive constant $C_{N,q,\de}$.
Thus we have the required properties by Definition \ref{defi:R} immediately.
This completes the proof of Lemma \ref{lemm:whole} \eqref{lemm:whole3}.
\end{proof}

In the same manner as we have obtained \eqref{170418_9},
we can prove the following inequality (cf. also \cite[Lemma 3.4]{SS12}):
For any $a,b>0$, $s\in\BR$, $\si\in(0,\pi/2)$, and multi-index $\al\in\BN_0^N$,
there is a positive constant $C_{a,b,s,\al,\si}$ such that
\begin{align}\label{ele:1}
\left|\pa_\xi^\al(a\la+b|\xi|^2)^s\right|
\leq C_{a,b,s,\al,\si}(|\la|^{1/2}+|\xi|)^{2s-|\al|}
\end{align}
for any $\la\in\Si_{\si,0}$ and $\xi\in\BR^N$.
Let $j,k,l,m=1,\dots,N$, $n=0,1$, and $\si\in(\si_*^w,\pi/2)$,
and let $\al$ be any multi-index of $\BN_0^N$ in what follows.

First, we consider the formula \eqref{160930_20}.
By Corollary \ref{coro:P}, \eqref{ele:1}, and Leibniz's rule, we have
for any $(\xi,\la)\in(\BR^N\setminus\{0\})\times\Si_{\si,0}$
\begin{align*}
\left|\pa_\xi^\al\left\{\left(\la\frac{d}{d\la}\right)^n
\left(\frac{i\xi_k\xi_l\xi_m\{\la+(\mu_*+\nu_*)|\xi|^2\}}{P(\xi,\la)}\right)\right\}\right|
&\leq C_{\al,\si,\mu_*,\nu_*,\ka_*}|\xi|^{1-|\al|}, \\
\left|\pa_\xi^\al\left\{\left(\la\frac{d}{d\la}\right)^n\left(\frac{\xi_j\xi_k\xi_l\xi_m }{P(\xi,\la)}\right)\right\}\right|
&\leq C_{\al,\si,\mu_*,\nu_*,\ka_*}|\xi|^{-|\al|},
\end{align*}
which, combined with Lemma \ref{lemm:whole},
furnishes 
\begin{align}\label{whole:A1}
&\CR_{\CL(Y_q(\ws),L_q(\ws)^{N^3})}
\big(\big\{\la^n(d/d\la)^n (\nabla^3\CA_1(\la)) \mid \la\in\Si_{\si,0}\big\}\big) \\
&\leq C_{N,q,\si,\mu_*,\nu_*,\ka_*}. \notag
\end{align}
Analogously, concerning \eqref{160930_21}, we can prove that 
\begin{align}\label{whole:A2}
&\CR_{\CL(Y_q(\ws),L_q(\ws)^{N^2})}
\big(\big\{\la^n(d/d\la)^n (\la^{1/2}\nabla^2\CA_1(\la)) \mid \la\in\Si_{\si,0}\big\}\big) \\
&\leq C_{N,q,\si,\mu_*,\nu_*,\ka_*}. \notag
\end{align}

Secondly, we consider the formula \eqref{160930_22}.
By Corollary \ref{coro:P}, \eqref{ele:1}, and Leibniz's rule, we have
for any $(\xi,\la)\in(\BR^N\setminus\{0\})\times\Si_{\si,0}$
\begin{align*}
&\left|\pa_\xi^\al\left\{\left(\la\frac{d}{d\la}\right)^n\left(\frac{\la\{\la+(\mu_*+\nu_*)|\xi|^2\}}{P(\xi,\la)}\right)\right\}\right|
\leq C_{\al,\si,\mu_*,\nu_*,\ka_*}|\xi|^{-|\al|}, \\
&\left|\pa_\xi^\al\left\{\left(\la\frac{d}{d\la}\right)^n\left(\frac{i\xi_j\la}{P(\xi,\la)}\right)\right\}\right|
\leq C_{\al,\si,\mu_*,\nu_*,\ka_*}(|\la|^{1/2}+|\xi|)^{-1}|\xi|^{-|\al|},
\end{align*}
which, combined with Lemma \ref{lemm:whole}, furnishes 
\begin{align}\label{whole:A3}
&\CR_{\CL(Y_q(\ws),W_q^1(\ws))}
\big(\big\{\la^n(d/d\la)^n (\la\CA_1(\la)) \mid \la\in\Si_{\si,\de}\big\}\big) \\
&\leq C_{N,q,\de,\si,\mu_*,\nu_*,\ka_*}. \notag
\end{align}
Thus \eqref{whole:A1}, \eqref{whole:A2}, and \eqref{whole:A3} imply 
the $\CR$-boundedness of $\CS_\la\CA_1(\la)$ required in Theorem \ref{theo:whole} \eqref{theo:whole2}.

Thirdly, we consider the formula \eqref{160930_23}.
By Corollary \ref{coro:P}, \eqref{ele:1}, and Leibniz's rule, we have
for any $(\xi,\la)\in(\BR^N\setminus\{0\})\times\Si_{\si,0}$
\begin{align*}
\left|\pa_\xi^\al\left\{\left(\la\frac{d}{d\la}\right)^n\left(\frac{\ka_*i\xi_k\xi_l\xi|\xi|^2}{P(\xi,\la)}\right)\right\}\right|
&\leq C_{\al,\si,\mu_*,\nu_*,\ka_*}|\xi|^{1-|\al|},\\
\left|\pa_\xi^\al  \left\{\left(\la\frac{d}{d\la}\right)^n\left(\frac{\xi_k\xi_l}{\la+\mu_*|\xi|^2}\right)\right\}\right|
&\leq C_{\al,\si,\mu_*,\nu_*,\ka_*}|\xi|^{-|\al|}, \\
\left|\pa_\xi^\al\left\{\left(\la\frac{d}{d\la}\right)^n
\left(\frac{\xi_j\xi_k\xi_l\xi(\nu_*\la+\ka_*|\xi|^2)}{(\la+\mu_*|\xi|^2)P(\xi,\la)}\right)\right\}\right|
&\leq C_{\al,\si,\mu_*,\nu_*,\ka_*}|\xi|^{-|\al|},
\end{align*}
which, combined with Lemma \ref{lemm:whole},
furnishes 
\begin{align}\label{whole:B1}
&\CR_{\CL(Y_q(\ws), L_q(\ws)^{N^3})}
\big(\big\{\la^n(d/d\la)^n (\nabla^2\CB_1(\la)) \mid \la\in\Si_{\si,0}\big\}\big) \\
&\leq C_{N,q,\si,\mu_*,\nu_*,\ka_*}. \notag
\end{align}
Analogously, we have
\begin{align}
&\CR_{\CL(Y_q(\ws), L_q(\ws)^{N^2})}
\big(\big\{\la^n(d/d\la)^n (\la^{1/2}\nabla\CB_1(\la)) \mid \la\in\Si_{\si,0}\big\}\big) \label{whole:B2} \\
&\leq C_{N,q,\si,\mu_*,\nu_*,\ka_*}, \notag \\
&\CR_{\CL(Y_q(\ws), L_q(\ws)^N)}
\big(\big\{\la^n(d/d\la)^n (\la\CB_1(\la)) \mid \la\in\Si_{\si,0}\big\}\big) \label{whole:B3} \\
&\leq C_{N,q,\si,\mu_*,\nu_*,\ka_*}. \notag 
\end{align}
Thus \eqref{whole:B1}, \eqref{whole:B2}, and \eqref{whole:B3} imply 
the $\CR$-boundedness of $\CT_\la\CB_1(\la)$ required in Theorem \ref{theo:whole} \eqref{theo:whole2}.

Finally, we show the uniqueness by means of the existence of solutions already proved above.
Let $(\rho,\Bu)\in W_q^3(\BR^N)\times W_q^2(\BR^N)^N$ be a solution 
of \eqref{eq:whole} with $\la\in\Si_{\si,\de}$ ($\si\in(\si_*^w,\pi/2)$, $\de>0$), $d=0$, and $\Bf=0$,
and let $\ph\in C_0^\infty(\BR^N)^N$.
Since $\ph\in L_{q'}(\BR^N)^N$ for $q'=q/(q-1)$,
we have a solution $(\wt\rho,\wt\Bu)\in W_{q'}^3(\BR^N)\times W_{q'}^2(\BR^N)^N$ to 
the following system:
\begin{equation*}
\left\{\begin{aligned}
\la\wt\rho + \di\wt\Bu &= 0 && \text{in $\ws$,} \\
\la\wt\Bu - \mu_*\De\wt\Bu -\nu_*\nabla\di\wt\Bu
-\ka_*\De\nabla\wt\rho &= \ph && \text{in $\ws$.}
\end{aligned}\right.
\end{equation*}
Then, by integration by parts, 
\begin{equation*}
(\Bu,\ph)_{\BR^N} 
= (\la\Bu - \mu_*\De\Bu -\nu_*\nabla\di\Bu,\wt\Bu)_{\BR^N}
+(\di\Bu,\ka_*\De\wt\rho)_{\BR^N}. 
\end{equation*}
We furthermore observe that 
\begin{align*}
(\di\Bu,\ka_*\De\wt\rho)_{\BR^N}
&=-(\la\rho,\ka_*\De\wt\rho)_{\BR^N}=
-(\ka_*\De\rho,\la\wt\rho)_{\BR^N} \\
& = (\ka_*\De\rho,\di\wt\Bu)_{\BR^N} 
=-(\ka_*\De\nabla\rho,\wt\Bu)_{\BR^N}.
\end{align*}
Thus, for any $\ph\in C_0^\infty(\BR^N)^N$,
\begin{equation*}
(\Bu,\ph)_{\BR^N}=(\la\Bu - \mu_*\De\Bu -\nu_*\nabla\di\Bu-\ka_*\De\nabla\rho,\wt\Bu)_{\BR^N}=0.
\end{equation*}
This implies that $\Bu=0$, which, combined with the equation $\la\rho+\di\Bu=0$ in $\BR^N$,
furnishes that $\rho=0$.
The proof of Theorem \ref{theo:whole} has been completed.

\section{Representation formulas of solutions for a half-space problem}\label{sec:half}
We prove Theorem \ref{theo1} from this section to Section \ref{sec:proof1},
i.e. we consider 
\begin{equation}\label{eq:full}
\left\{\begin{aligned}
\la\rho + \di\Bu &= d && \text{in $\uhs$,} \\
\la\Bu - \mu_*\De\Bu-\nu_*\nabla\di\Bu
-\ka_*\De\nabla\rho &= \Bf && \text{in $\uhs$,} \\
\left\{\mu_*\BD(\Bu)+(\nu_*-\mu_*)\di\Bu\BI
+\ka_*\De\rho\BI\right\}\Bn &= \Bg && \text{on $\bdry$,} \\
\Bn\cdot\nabla\rho &=h && \text{on $\bdry$.}
\end{aligned}\right.
\end{equation}

Let $E$ be an extension operator from $W_q^1(\BR_+^N)$ to $W_q^1(\BR^N)$,
and let $E_0$ be the zero extension operator from $L_q(\BR_+^N)$ to $L_q(\BR^N)$.
Setting in \eqref{eq:full}
\begin{equation*}
\rho = \CA_1(\la)(E d, E_0\Bf)+\wt\rho, \quad \Bu = \CB_1(\la)(E d, E_0\Bf)+\wt\Bu
\end{equation*}
for $\CA_1(\la)$, $\CB_1(\la)$ obtained in Theorem \ref{theo:whole},
we have
\begin{equation*}
\left\{\begin{aligned}
\la\wt\rho + \di\wt\Bu &= 0 && \text{in $\uhs$,} \\
\la\wt\Bu- \mu_*\De\wt\Bu -\nu_*\nabla\di\wt\Bu
-\ka_*\De\nabla\wt\rho &= 0 && \text{in $\uhs$,} \\
\left\{\mu_*\BD(\wt\Bu)+(\nu_*-\mu_*)\di\wt\Bu\BI
+\ka_*\De\wt\rho\,\BI\right\}\Bn &= \wt\Bg && \text{on $\bdry$,} \\
\Bn\cdot\nabla\wt\rho &= \wt h && \text{on $\bdry$,}
\end{aligned}\right.
\end{equation*}
where we have set
\begin{align*}
\wt\Bg
&=\Bg 
- \big\{\mu_*\BD\left(\CB_1(\la)(E d, E_0\Bf)\right)
+(\nu_*-\mu_*)\di\CB_1(\la)(E d, E_0\Bf)\BI \\
& +\ka_*\De\CA_1(\la)(E d, E_0\Bf)\BI\big\}\Bn, \\
\wt h &= h - \Bn\cdot\nabla\CA_1(\la)(E d, E_0\Bf).
\end{align*}
From this viewpoint, we consider the following reduced system:
\begin{equation}\label{eq:half1}
\left\{\begin{aligned}
\la\rho + \di\Bu &= 0 && \text{in $\uhs$,} \\
\la\Bu - \mu_*\De\Bu -\nu_*\nabla\di\Bu
-\ka_*\De\nabla\rho&= 0 && \text{in $\uhs$,} \\
\left\{\mu_*\BD(\Bu)+(\nu_*-\mu_*)\di\Bu\BI
+\ka_*\De\rho\BI\right\}\Bn &= \Bg && \text{on $\bdry$,} \\
\Bn\cdot\nabla\rho &=h && \text{on $\bdry$.}
\end{aligned}\right.
\end{equation}

We denote the reduced versions of $X_q(\BR_+^N)$, $\FX_q(\BR_+^N)$, and $\CF_\la$
by $\wt X_q(\BR_+^N)$, $\wt\FX_q(\BR_+^N)$, and $\wt\CF_\la$, respectively,
i.e. we set $\wt X_q(\BR_+^N)=W_q^1(\BR_+^N)^N\times W_q^2(\BR_+)$ and set, for $\BG=(\Bg,h)\in \wt X_q(\BR_+^N)$, 
\begin{align*}
&\wt\FX_q(\BR_+^N)=L_q(\BR_+^N)^{\wt\CN}, \quad \wt\CN=N^2+N+N^2+N+1, \\
&\wt\CF_\la \BG = (\nabla\Bg,\la^{1/2}\Bg,\nabla^2 h ,\nabla\la^{1/2}h,\la h)\in\wt\FX_q(\BR_+^N).
\end{align*}
It then suffices to show the following theorem
in order to prove Theorem \ref{theo1}.

\begin{theo}\label{theo:half}
Let $q\in(1,\infty)$ and $\de>0$.
Assume that $\mu_*$, $\nu_*$, and $\ka_*$ are positive constants
and that the condition \eqref{para_cond} holds.
Then there exists a constant $\si_*\in(\si_*^w,\pi/2)$,
independent of $q$ and $\de$, such that
for any $\si\in(\si_*,\pi/2)$ the following assertions hold true:
\begin{enumerate}[$(1)$]
\item
For any $\la\in\Si_{\si,\de}$ there are operator families $\CA_2(\la)$, $\CB_2(\la)$, with
\begin{align*}
\CA_2(\la) &\in \Hol(\Si_{\si,\de},\CL(\wt\FX_q(\uhs),W_q^3(\uhs))), \\
\CB_2(\la) &\in \Hol(\Si_{\si,\de},\CL(\wt\FX_q(\uhs),W_q^2(\uhs)^N)),
\end{align*}
such that, for every $\BG=(\Bg,h)\in\wt X_q(\uhs)$,
$(\rho,\Bu)=(\CA_2(\la)\wt\CF_\la\BG,\CB_2(\la)\wt\CF_\la\BG)$
is a solution of \eqref{eq:half1}.
\item
There exists a positive constant $c_2$,
depending on at most $N$, $q$, $\de$, $\si$, $\mu_*$, $\nu_*$, and $\ka_*$,
such that for $n=0,1$
\begin{align*}
\CR_{\CL(\wt\FX_q(\uhs), \FA_q(\uhs))}
\left(\left\{\left(\la\frac{d}{d\la}\right)^n(\CS_\la\CA_2(\la)) \mid \la\in\Si_{\si,\de}\right\}\right)&\leq c_2, \\
\CR_{\CL(\wt\FX_q(\uhs),\FB_q(\uhs))}
\left(\left\{\left(\la\frac{d}{d\la}\right)^n(\CT_\la\CB_2(\la)) \mid \la\in\Si_{\si,\de}\right\}\right)&\leq c_2,
\end{align*}
where $\FA(\BR_+^N)$, $\FB(\BR_+^N)$, $\CS_\la$, and $\CT_\la$ are given by \eqref{ABset}.
\end{enumerate}
\end{theo}

\begin{rema}\label{rema:half}
\begin{enumerate}[(1)]
\item\label{rema:half1}
The uniqueness of \eqref{eq:full} follows from the existence of solutions
for a dual problem in a similar way to the whole space problem.
\item\label{rema:half2}
The $\CR$-boundedness has the following properties (cf. \cite[Proposition 3.4]{DHP03}):
\begin{enumerate}[(a)]
\item
Let $X$ and $Y$ be Banach spaces, and let $\CT$ and $\CS$ be $\CR$-bounded families on $\CL(X,Y)$.
Then $\CT+\CS=\{T+S \mid T\in\CT, S\in\CS\}$ is also $\CR$-bounded on $\CL(X,Y)$
and $\CR_{\CL(X,Y)}(\CT+\CS)\leq \CR_{\CL(X,Y)}(\CT)+\CR_{\CL(X,Y)}(\CS)$.
\item
Let $X$, $Y$, and $Z$ be Banach spaces, and let $\CT$ and $\CS$ be $\CR$-bounded families
on $\CL(X,Y)$ and on $\CL(Y,Z)$, respectively.
Then $\CS\CT=\{ST \mid S\in\CS, T\in\CT\}$
is also $\CR$-bounded on $\CL(X,Z)$ and $\CR_{\CL(X,Z)}(\CS\CT)\leq \CR_{\CL(X,Y)}(\CT)\CR_{\CL(Y,Z)}(\CS)$.
\end{enumerate}
By these properties, 
Theorem \ref{theo1} follows from Theorems \ref{theo:whole}, \ref{theo:half} immediately.
\end{enumerate}
\end{rema}

The remaining part of this section is devoted to compute
the representation formulas of solutions of \eqref{eq:half1}.
Let us define the partial Fourier transform with respect to $x'=(x_1,\dots,x_{N-1})$ and its inverse transform as
\begin{align*}
&\wh u = \wh u(x_N) =\wh u(\xi',x_N) =\int_{\BR^{N-1}}e^{-ix'\cdot\xi'} u(x',x_N)\intd x', \\
&\CF_{\xi'}^{-1}[v(\xi',x_N)](x') = \frac{1}{(2\pi)^{N-1}}\int_{\BR^{N-1}}e^{ix'\cdot\xi'} v(\xi',x_N)\intd \xi'.
\end{align*}

The system \eqref{eq:half1} can be written as
\begin{equation*}
\left\{\begin{aligned}
\la \rho +\di\Bu &= 0 && \text{in $\uhs$,} \\
\la\Bu -\mu_*\De\Bu-\nu_*\nabla\di\Bu-\ka_*\De\nabla\rho&=0 && \text{in $\uhs$,} \\
\mu_*(\pa_j u_N+\pa_N u_j) &=-g_j && \text{on $\BR_0^N$,} \\
2\mu_*\pa_N u_N +(\nu_*-\mu_*)\di\Bu +\ka_*\De\rho&=-g_N && \text{on $\BR_0^N$,} \\
\pa_N\rho &=-h && \text{on $\BR_0^N$,}
\end{aligned}\right.
\end{equation*}
for $j=1,\dots,N-1$, where $\Bu=(u_1,\dots,u_N)^\SST$.
Set $\ph=\di\Bu$.
Applying the partial Fourier transform to the above system yields
the ordinary differential equations: 
\begin{align}
\la\wh\rho +\wh\ph &=0,
\text{ $x_N>0$,} \label{eq:1} \\ 
\la \wh u_j-\mu_*(\pa_N^2-|\xi'|^2)\wh u_j-\nu_*i\xi_j\wh\ph
-\ka_*i\xi_j(\pa_N^2-|\xi'|^2)\wh\rho&=0,
\text{ $x_N>0$,} \label{eq:2} \\
\la \wh u_N-\mu_*(\pa_N^2-|\xi'|^2)\wh u_N-\nu_*\pa_N\wh\ph
-\ka_*\pa_N(\pa_N^2-|\xi'|^2)\wh\rho&=0, 
\text{ $x_N>0$,} \label{eq:3}
\end{align}
with the boundary conditions: 
\begin{align}
\mu_*(i\xi_j\wh u_N(0)+\pa_N\wh u_j(0)) &=-\wh g_j(0), \label{eq:4} \\
2\mu_*\pa_N\wh u_N(0)+(\nu_*-\mu_*)\wh \ph(0)+\ka_*(\pa_N^2-|\xi'|^2)\wh\rho(0) &=-\wh g_N(0), \label{eq:5} \\
\pa_N\wh\rho(0) &=-\wh h(0). \label{eq:6}
\end{align}
We insert \eqref{eq:1} into \eqref{eq:2}, \eqref{eq:3}, \eqref{eq:5}, and \eqref{eq:6}
in order to obtain
\begin{align}
\la^2\wh u_j-\la\mu_*(\pa_N^2-|\xi'|^2)\wh u_j
-i\xi_j\{\la\nu_* -\ka_*(\pa_N^2-|\xi'|^2)\}\wh \ph &=0,\text{ $x_N>0$,} \label{eq:41} \\
\la^2\wh u_N-\la\mu_*(\pa_N^2-|\xi'|^2)\wh u_N
-\pa_N\{\la\nu_* -\ka_*(\pa_N^2-|\xi'|^2)\}\wh \ph &=0, \text{ $x_N>0$,} \label{eq:42} \\
2\la\mu_*\pa_N\wh u_N(0)+
\left\{\la(\nu_*-\mu_*)-\ka_*(\pa_N^2-|\xi'|^2)\right\}\wh\ph(0) &=-\la\wh g_N(0), \label{eq:43} \\
\pa_N\wh \ph(0) &=\la\wh h(0). \label{eq:44}
\end{align}
By \eqref{eq:41} and \eqref{eq:42}, we have
\begin{equation*}
\la^2\wh \ph-\la(\mu_*+\nu_*) (\pa_N^2-|\xi'|^2)\wh \ph 
+\ka_*(\pa_N^2-|\xi'|^2)^2\wh \ph=0, \text{ $x_N>0$,}
\end{equation*}
which implies that $P_\la(\pa_N)\wh \ph =0$ with
\begin{equation*}
P_\la(t) 
=\la^2 -\la(\mu_*+\nu_*)(t^2-|\xi'|^2)
+\ka_*(t^2-|\xi'|^2)^2.
\end{equation*}
Here we  set
\begin{equation}\label{omega}
\om_\la = \sqrt{|\xi'|^2+ \frac{\la}{\mu_*}}, \quad
\Re\om_\la>0 \quad \text{for $\la\in\Si_{\si,0}$, $\si\in(0,\pi/2)$.}
\end{equation}
Applying $P_\la(\pa_N)$ to \eqref{eq:41}, \eqref{eq:42} furnishes 
\begin{equation}\label{160827_1}
(\pa_N^2-\om_\la^2)P_\la(\pa_N) \wh u_J =0 \quad
(J=1,\dots,N).
\end{equation}

We consider the roots of $P_\la(t)$ at this point. 
Since
\begin{equation*}
P_\la(t)=\ka_*\la^2\left\{\frac{1}{\ka_*}-\left(\frac{\mu_*+\nu_*}{\ka_*}\right)
\left(\frac{t^2-|\xi'|^2}{\la}\right)+\left(\frac{t^2-|\xi'|^2}{\la}\right)^2\right\},
\end{equation*}
we set $s=(t^2-|\xi'|^2)/\la$ and solve the equation:
\begin{equation}\label{170309_1}
s^2-\frac{\mu_*+\nu_*}{\ka_*}s+\frac{1}{\ka_*}=0.
\end{equation}
By the assumption $\eta_*^w\neq0$, we have the solutions $s_1$, $s_2$
$(s_1\neq s_2)$ of \eqref{170309_1}
such that $s_1=s_+$ and $s_2=s_-$ with
\begin{equation*}
s_\pm = 
\left\{\begin{aligned}
&\frac{\mu_*+\nu_*}{2\ka_*} \pm \sqrt{\eta_*^w} && (\eta_*^w>0), \\
&\frac{\mu_*+\nu_*}{2\ka_*} \pm i\sqrt{|\eta_*^w|} && (\eta_*^w<0).
\end{aligned}\right.
\end{equation*}
Thus setting, for $\la\in\Si_{\si,0}$ with $\si\in(\si_*^w,\pi/2)$,
\begin{alignat}{2}\label{t1t2}
t_1&=\sqrt{|\xi'|^2+s_1\la}, \quad &t_2&=\sqrt{|\xi'|^2+s_2\la}, \\
t_3&=-\sqrt{|\xi'|^2+s_1\la}, \quad &t_4 &=-\sqrt{|\xi'|^2+s_2\la},
\notag
\end{alignat}
we see that $t_k=t_k(\xi',\la)$ $(k=1,2,3,4)$ are the roots of $P_\la(t)$ different from each other
and that $\Re t_1>0$, $\Re t_2>0$, $\Re t_3<0$, and $\Re t_4<0$.

\begin{rema}\label{rema:sol}
We have in general the following situations concerning roots with positive real parts
for the characteristic equation of \eqref{160827_1}: 
\begin{enumerate}[(1)]
\item Case $\eta_*^w<0$.
It holds that $\om_\la\neq t_1$, $\om_\la\neq t_2$, and $t_1\neq t_2$.
\item Case $\eta_*^w=0$.
There are two cases: $\om_\la\neq t_1$ and $t_1=t_2$; $\om_\la=t_1=t_2$.
\item Case $\eta_*^w>0$.
There are three cases: 
$\om_\la\neq t_1$, $\om_\la\neq t_2$, and $t_1\neq t_2$;
$\om_\la=t_1$ and $t_1\neq t_2$;
$\om_\la=t_2$ and $t_1\neq t_2$.
\end{enumerate}
The condition \eqref{para_cond} guarantees that
we have the three roots with positive real parts different from each other.
We, however, think that our technique in the following 
can be applied to the case of equal roots.
\end{rema}

In view of \eqref{160827_1} and Remark \ref{rema:sol},
we look for solutions $\wh u_J$ of the forms:
\begin{equation*}
\wh u_J = \al_J e^{-\om_\la x_N} + \beta_J(e^{-t_1 x_N}-e^{-\om_\la x_N})
+ \ga_J(e^{-t_2 x_N}-e^{-\om_\la x_N}).
\end{equation*}
Here and subsequently, $J$ runs from $1$ to $N$, while $j$ runs from $1$ to $N-1$.
It then holds that
\begin{align}
\pa_N \wh u_J
&=\big(-\om_\la\al_J+\om_\la\beta_J+\om_\la\ga_J\big)e^{-\om_\la x_N} \label{eq:8} \\
&-t_1\beta_J e^{-t_1 x_N}-t_2\ga_J e^{-t_2 x_N}, \notag \\
\wh \ph
&=\big(i\xi'\cdot\al'-i\xi'\cdot\beta'-i\xi'\cdot\ga'-\om_\la\al_N+\om_\la\beta_N+\om_\la\ga_N\big)e^{-\om_\la x_N} \label{eq:9} \\
&+\big(i\xi'\cdot\beta'-t_1\beta_N\big)e^{-t_1 x_N} 
+ \big(i\xi'\cdot\ga' -t_2\ga_N \big)e^{-t_2 x_N}, \notag
\end{align}
where $i\xi'\cdot a'=\sum_{j=1}^{N-1}i\xi_j a_j$ for $a\in \{\al,\beta,\ga\}$.
By \eqref{eq:41} and \eqref{eq:42}, we have
\begin{align*}
\mu_*\la(\pa_N^2-\om_\la^2)\wh u_j
+i\xi_j\{\nu_*\la-\ka_*(\pa_N^2-|\xi'|^2)\}\wh \ph=0, \text{ $x_N>0$,} \\
\mu_*\la(\pa_N^2-\om_\la^2)\wh u_N
+\pa_N\{\nu_*\la-\ka_*(\pa_N^2-|\xi'|^2)\}\wh \ph=0, \text{ $x_N>0$,}
\end{align*}
which, combined with \eqref{eq:9} and the assumption $\ka_*\neq \mu_*\nu_*$, furnishes that
\begin{align}
i\xi'\cdot\al'-i\xi'\cdot\beta'-i\xi'\cdot\ga'
-\om_\la\al_N+\om_\la\beta_N+\om_\la\ga_N &=0, \label{eq:10} \\
\mu_*\la\beta_j(t_1^2-\om_\la^2)+i\xi_j(i\xi'\cdot\beta'-t_1\beta_N)
\{\nu_*\la-\ka_*(t_1^2-|\xi'|^2)\}&=0, \label{eq:11} \\
\mu_*\la\ga_j(t_2^2-\om_\la^2)+i\xi_j(i\xi'\cdot\ga'-t_2\ga_N)
\{\nu_*\la-\ka_*(t_2^2-|\xi'|^2)\}&=0, \label{eq:12} \\
\mu_*\la\beta_N(t_1^2-\om_\la^2)-t_1(i\xi'\cdot\beta'-t_1\beta_N)
\{\nu_*\la-\ka_*(t_1^2-|\xi'|^2)\}&=0, \label{eq:13} \\
\mu_*\la\ga_N(t_2^2-\om_\la^2)-t_2(i\xi'\cdot\ga'-t_2\ga_N)
\{\nu_*\la-\ka_*(t_2^2-|\xi'|^2)\}&=0. \label{eq:14}
\end{align}
By \eqref{eq:11}-\eqref{eq:14}, we have
\begin{equation*}
(t_1^2-\om_\la^2)\left(\beta_j+\frac{i\xi_j}{t_1}\beta_N\right) = 0,\quad
(t_2^2-\om_\la^2)\left(\ga_j+\frac{i\xi_j}{t_2}\ga_N\right)=0.
\end{equation*}
As was seen in Remark \ref{rema:sol}, we know that $\om_\la\neq t_1$ and $\om_\la \neq t_2$
under the condition \eqref{para_cond},
and therefore the last two identities imply that
\begin{equation}\label{eq:51}
\beta_j = -\frac{i\xi_j}{t_1}\beta_N, \quad \ga_j = -\frac{i\xi_j}{t_2}\ga_N.
\end{equation}
These relations, furthermore, yield that
\begin{align}
i\xi'\cdot\beta'-t_1\beta_N
&=-t_1^{-1}(t_1^2-|\xi'|^2)\beta_N, \label{eq:60} \\
i\xi'\cdot\ga'-t_2\ga_N
&=	-t_2^{-1}(t_2^2-|\xi'|^2)\ga_N. \label{eq:61} 
\end{align}
On the other hand, we have by \eqref{eq:9} and \eqref{eq:10}
\begin{equation}\label{eq:50}
\wh \ph = (i\xi'\cdot\beta'-t_1\beta_N)e^{-t_1 x_N}
+(i\xi'\cdot\ga'-t_2\ga_N)e^{-t_2 x_N}. 
\end{equation}

Next we consider the boundary conditions.
By \eqref{eq:4} and \eqref{eq:8}, we have 
\begin{equation}\label{eq:54}
i\xi_j\al_N -\om_\la \al_j+(-t_1+\om_\la)\beta_j + (-t_2+\om_\la)\ga_j  = -\mu_*^{-1}\wh g_j(0).
\end{equation}
In addition, by \eqref{eq:43}, \eqref{eq:44}, \eqref{eq:8}, and \eqref{eq:50}, we have
\begin{align}
&2\mu_*\la\{-\om_\la\al_N + (-t_1+\om_\la)\beta_N + (-t_2 +\om_\la)\ga_N\} \label{eq:55} \\
&\quad+\{\la(\nu_*-\mu_*)-\ka_*(t_1^2-|\xi'|^2)\}(i\xi'\cdot\beta'-t_1\beta_N)  \notag\\
&\quad+\{\la(\nu_*-\mu_*)-\ka_*(t_2^2-|\xi'|^2)\}(i\xi'\cdot\ga'-t_2\ga_N)  \notag \\
&=-\la\wh{g}_N(0), \notag \\
&t_1\big(i\xi'\cdot\beta'-t_1\beta_N\big)
+t_2\big(i\xi'\cdot\ga' -t_2\ga_N \big)
= -\la\wh h(0). \label{eq:56}
\end{align}

We now derive simultaneous equations with respect to $\beta_N$ and $\ga_N$.
By \eqref{eq:51} and \eqref{eq:54}, we have
\begin{align}
&i\xi_j\al_N -\om_\la \al_j
= -\mu_*^{-1}\wh g_j-t_1^{-1}i\xi_j(t_1-\om_\la)\beta_N-t_2^{-1}i\xi_j(t_2-\om_\la)\ga_N. \label{eq:59}
\end{align}
Since \eqref{eq:55} can be written as
\begin{align*}
&2\mu_*\la\{-\om_\la\al_N + (-t_1+\om_\la)\beta_N + (-t_2 +\om_\la)\ga_N\}  \\
&\quad+\{\la\nu_*-\ka_*(t_1^2-|\xi'|^2)\}(i\xi'\cdot\beta'-t_1\beta_N)  \notag\\
&\quad+\{\la\nu_*-\ka_*(t_2^2-|\xi'|^2)\}(i\xi'\cdot\ga'-t_2\ga_N)  \notag \\
&\quad -\la\mu_*(i\xi'\cdot\beta'-t_1\beta_N+i\xi'\cdot\ga'-t_2\ga_N) \notag \\
&=-\la\wh{g}_N(0), \notag 
\end{align*}
we observe that, by \eqref{eq:13}, \eqref{eq:14}, \eqref{eq:60}, and \eqref{eq:61},
\begin{align}\label{eq:63}
&2t_1t_2\om_\la\al_N  \\
&=\mu_*^{-1}t_1t_2\wh g_N(0) +t_2(2t_1\om_\la-\om_\la^2-|\xi'|^2)\beta_N
+t_1(2t_2\om_\la-\om_\la^2-|\xi'|^2)\ga_N.
\notag
\end{align}
By \eqref{eq:59} and \eqref{eq:63}, we have
\begin{align}\label{eq:64}
\al_j 
&= \frac{1}{\om_\la}\bigg\{\frac{\wh g_j(0)}{\mu_*} + \frac{i\xi_j}{2\mu_*\om_\la}\wh g_N(0)
+\frac{i\xi_j}{2t_1\om_\la}(4t_1\om_\la-3\om_\la^2-|\xi'|^2)\beta_N \\
&+\frac{i\xi_j}{2t_2\om_\la}(4t_2\om_\la-3\om_\la^2-|\xi'|^2)\ga_N\bigg\}. \notag
\end{align}
On the other hand, by \eqref{eq:54}, we have for $i\xi'\cdot\wh \Bg'(0)=\sum_{j=1}^{N-1}i\xi_j\wh g_j(0)$
\begin{equation*}
\om_\la i\xi'\cdot\al'
= \mu_*^{-1}i\xi'\cdot\wh \Bg'(0) -|\xi'|^2\al_N +(-t_1+\om_\la)i\xi'\cdot\beta' + (-t_2+\om_\la)i\xi'\cdot\ga',
\end{equation*}
which, inserted into \eqref{eq:10}, furnishes 
\begin{equation}\label{eq:71}
-t_1i\xi'\cdot\beta' - t_2i\xi'\cdot\ga' -(\om_\la^2+|\xi'|^2)\al_N+\om_\la^2\beta_N+\om_\la^2\ga_N
 = -\mu_*^{-1}i\xi'\cdot\wh \Bg'(0).
\end{equation}
Since it holds by \eqref{eq:51} that
$i\xi'\cdot\beta' = t_1^{-1}|\xi'|^2\beta_N$ and $i\xi'\cdot \ga' = t_2^{-1}|\xi'|^2\ga_N$,
we insert these identities and \eqref{eq:63} into \eqref{eq:71}
in order to obtain
\begin{align}\label{eq:65}
&t_2 \{(\om_\la^2+|\xi'|^2)^2-4t_1\om_\la|\xi'|^2\}\beta_N 
+t_1\{(\om_\la^2+|\xi'|^2)^2-4t_2\om_\la|\xi'|^2\}\ga_N  \\
&=-2\mu_*^{-1}t_1t_2\om_\la i\xi'\cdot\wh \Bg'(0) +\mu_*^{-1}t_1t_2(\om_\la^2+|\xi'|^2)\wh g_N(0).
\notag
\end{align}
Furthermore, by \eqref{eq:60}, \eqref{eq:61}, and \eqref{eq:56}, 
\begin{equation}\label{eq:66}
(t_1^2-|\xi'|^2)\beta_N +(t_2^2-|\xi'|^2)\ga_N = \la \wh h(0).
\end{equation}
Thus, by \eqref{eq:65} and \eqref{eq:66}, we have achieved
\begin{align}
&\BL 
\begin{pmatrix}
\beta_N \\ \ga_N
\end{pmatrix}
=
\begin{pmatrix}
-2\mu_*^{-1}t_1t_2\om_\la i\xi'\cdot\wh\Bg'(0) +\mu_*^{-1}t_1t_2(\om_\la^2+|\xi'|^2)\wh g_N(0) \\
\la \wh h(0)
\end{pmatrix},\label{SEs} \\
&\BL
=\begin{pmatrix}
t_2\{(\om_\la^2+|\xi'|^2)^2-4t_1\om_\la|\xi'|^2\} &
t_1\{(\om_\la^2+|\xi'|^2)^2-4t_2\om_\la|\xi'|^2\} \\
t_1^2-|\xi'|^2 & t_2^2-|\xi'|^2
\end{pmatrix}. \notag 
\end{align}

Finally, we solve \eqref{SEs} and the equations \eqref{eq:1}-\eqref{eq:6}.
By direct calculations, 
\begin{align*}
\det \BL &= 
(t_2-t_1)\{(\om_\la^2+|\xi'|^2)^2(t_2^2+t_2t_1+t_1^2-|\xi'|^2) \\ 
&-4t_1t_2\om_\la|\xi'|^2(t_2+t_1)\}.
\end{align*}
We here prove

\begin{lemm}\label{lemm:detL}
Assume that $\mu_*$, $\nu_*$, and $\ka_*$ satisfy 
the same assumption as in Theorem $\ref{theo:half}$. 
Then $\det\BL\neq 0$
for any $(\xi',\la)\in\BR^{N-1}\times(\overline{\BC_+}\setminus\{0\})$, where $\overline{\BC_+}=\{z\in\BC \mid \Re z\geq0\}$. 
\end{lemm}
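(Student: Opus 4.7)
The plan is to exploit the factorization
\[
\det \BL = (t_2-t_1)\bigl\{(\om_\la^2+|\xi'|^2)^2(t_1^2+t_1 t_2+t_2^2-|\xi'|^2) - 4 t_1 t_2 \om_\la |\xi'|^2(t_1+t_2)\bigr\}
\]
already recorded above, and treat the two factors separately. For the linear factor, $t_1^2 - t_2^2 = (s_1-s_2)\la$, and the hypothesis $\eta_*^w \neq 0$ in \eqref{para_cond} gives $s_1 \neq s_2$; hence $t_2 - t_1$ vanishes only at $\la = 0$, which is excluded by $\la \in \overline{\BC_+} \setminus \{0\}$.

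Denote the bracket by $\CD$ and show $\CD \neq 0$. I would first dispose of two degenerate sub-cases. At $\xi' = 0$, the second term of $\CD$ drops out and $\CD = (\la/\mu_*)^2(t_1^2 + t_1 t_2 + t_2^2)$, which is nonzero because $\la \neq 0$ and the branch convention $\Re t_1, \Re t_2 > 0$ forces $\Re(t_1^2 + t_1 t_2 + t_2^2) > 0$; moreover $t_1 + t_2 = 0$ would give $t_1^2 = t_2^2$, hence $\la = 0$, also excluded. Away from these, the crucial structural observation is that, after substituting $\om_\la^2 + |\xi'|^2 = 2|\xi'|^2 + \la/\mu_*$ (independent of $\om_\la$), the expression $\CD$ is \emph{linear in} $\om_\la$. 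The equation $\CD = 0$ therefore solves explicitly for $\om_\la$; squaring and using $\om_\la^2 = |\xi'|^2 + \la/\mu_*$, together with the Vieta identities
\[
(t_1+t_2)^2 = 2|\xi'|^2 + (s_1+s_2)\la + 2 t_1 t_2, \qquad (t_1 t_2)^2 = |\xi'|^4 + (s_1+s_2)|\xi'|^2\la + \la^2/\ka_*
\]
for the polynomial $P_\la$, one reduces the relation modulo the second constraint to a linear equation $C_1(|\xi'|^2,\la)\,t_1 t_2 = -C_0(|\xi'|^2,\la)$ with polynomial coefficients. Squaring once more and eliminating $(t_1 t_2)^2$ then yields a polynomial identity
\[
C_0(|\xi'|^2,\la)^2 = \bigl(|\xi'|^4 + (s_1+s_2)|\xi'|^2\la + \la^2/\ka_*\bigr)\,C_1(|\xi'|^2,\la)^2
\]
in $|\xi'|^2$ and $\la$ alone, whose failure on $\BR^{N-1}\times(\overline{\BC_+}\setminus\{0\})$ under \eqref{para_cond} is the final assertion.

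The main obstacle is precisely this last polynomial verification: one must keep careful track of how $\ka_*,\mu_*,\nu_*$ enter the coefficients $C_0,C_1$ and argue that the defect $C_0^2 - (\cdots) C_1^2$ cannot vanish on $[0,\infty)\times(\overline{\BC_+}\setminus\{0\})$ once $\ka_* \neq \mu_*\nu_*$. The guiding principle here is Remark \ref{rema:sol}: the condition $\ka_* \neq \mu_*\nu_*$ is exactly what rules out $\om_\la \in \{t_1,t_2\}$, so any degeneracy of $\det\BL$ ought to be reflected in a coincidence among the characteristic roots, which \eqref{para_cond} forbids. In practice I would expand the defect as a polynomial in $\la$ with coefficients in $\BR[|\xi'|^2]$, isolate a leading coefficient involving a factor proportional to $(\ka_* - \mu_*\nu_*)$ (or an analogous discriminant), and then split into the two regimes $\Re\la > 0$ and $\la \in i\BR\setminus\{0\}$, using $\Re\om_\la, \Re t_1, \Re t_2 > 0$ in the first regime and an explicit real/imaginary-part balance in the second.
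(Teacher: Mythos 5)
Your approach is entirely different from the paper's and, more importantly, it is not complete. The paper proves the lemma by contradiction via a \emph{uniqueness argument for the boundary value problem}: it observes that $\det\BL=0$ would produce a nontrivial exponentially decaying solution $(\rho,u_1,\dots,u_N)$ of the homogeneous ODE system \eqref{eq:100}--\eqref{eq:105}, and then derives an energy identity (by multiplying by conjugates, integrating by parts, and using \eqref{eq:100} to handle the capillary term) whose real and imaginary parts force $\ph=0$, $\rho=0$, and $\Bu=0$. The hypothesis $\nu_*>0$ enters precisely in the coercivity step \eqref{eq:270}. This route bypasses the algebraic structure of $\det\BL$ altogether.

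Your plan is instead a direct algebraic elimination: use the linearity of the bracket in $\om_\la$, square, invoke the Vieta relations for $P_\la$, and reduce to a polynomial non-vanishing statement in $(|\xi'|^2,\la)$. The structural observations you make along the way (that the bracket is affine in $\om_\la$; that the linear factor $t_2-t_1$ cannot vanish for $\la\neq 0$ thanks to $s_1\neq s_2$; that $\ka_*\neq\mu_*\nu_*$ is exactly what excludes $\om_\la\in\{t_1,t_2\}$) are all correct. However, the argument stops precisely at the decisive step: you explicitly label the polynomial non-vanishing $C_0^2 \neq (|\xi'|^4+(s_1+s_2)|\xi'|^2\la+\la^2/\ka_*)\,C_1^2$ as ``the main obstacle'' and then only describe a strategy for attacking it (``In practice I would expand \dots''), without actually computing $C_0$, $C_1$, the defect polynomial, or establishing its sign/non-vanishing in either regime $\Re\la>0$ or $\la\in i\BR\setminus\{0\}$. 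This is not a proof but a research plan, and it is far from clear \emph{a priori} that the defect polynomial admits the clean factorization (say, with a factor $\ka_*-\mu_*\nu_*$) that would make the argument close; the squaring steps also risk introducing spurious branches that would need to be excluded by sign considerations you have not supplied. A secondary, smaller issue: in the $\xi'=0$ sub-case, the claim that $\Re t_1,\Re t_2>0$ ``forces'' $\Re(t_1^2+t_1t_2+t_2^2)>0$ is not a valid implication in general (the square and the product of right-half-plane numbers need not lie in the right half-plane); one must instead use that $t_1^2+t_2^2=(s_1+s_2)\la$ and $t_1t_2=\la/\sqrt{\ka_*}$ are both positive real multiples of $\la$, which is a sharper and correct statement. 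As it stands, the proposal does not establish the lemma; the paper's energy argument, which you do not reproduce or even hint at, is what actually does the work.
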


\begin{proof}
The lemma is proved by contradiction.
Suppose that $\det\BL=0$ for some 
$(\xi',\la)\in\BR^{N-1}\times(\overline{\BC_+}\setminus\{0\})$.
Then we observe that there is $(\beta_N,\ga_N)\neq(0,0)$
satisfying \eqref{SEs} with $\Bg=0$ and $h=0$.
This implies that the equations \eqref{eq:1}-\eqref{eq:6} with $\Bg=0$ and $h=0$
admits a non-trivial solution sufficiently smooth and decaying exponentially as $x_N\to\infty$,
i.e. there exists $(\rho,u_1,\dots,u_N)\neq(0,0,\dots,0)$ such that,
for $x_N>0$ and $\ph(x_N) = \sum_{j=1}^{N-1}i\xi_j u_j(x_N)+\pa_N u_N(x_N)$,
\begin{align}
&\la\rho(x_N) +\ph(x_N) =0, \label{eq:100} \\ 
&\la u_j(x_N)-\mu_*(\pa_N^2-|\xi'|^2) u_j(x_N) -\nu_*i\xi_j \ph(x_N) \label{eq:101}\\
&\quad -\ka_*i\xi_j(\pa_N^2-|\xi'|^2)\rho(x_N)=0,  \quad j=1,\dots,N-1, \notag \\
&\la  u_N(x_N)-\mu_*(\pa_N^2-|\xi'|^2) u_N(x_N) -\nu_*\pa_N  \ph(x_N)\label{eq:102} \\
&\quad-\ka_*\pa_N(\pa_N^2-|\xi'|^2)\rho(x_N)=0, \notag \\
&\mu_*(i\xi_j u_N(0)+\pa_N u_j(0)) = 0, \quad j=1,\dots,N-1, \label{eq:103} \\
&2\mu_*\pa_N u_N(0)+(\nu_*-\mu_*) \ph(0)+\ka_*(\pa_N^2-|\xi'|^2)\rho(0) =0, \label{eq:104} \\
&\pa_N\rho(0) =0. \label{eq:105}
\end{align}
%
%
%
%

In this proof,  we set 
$(a,b)=\int_0^\infty a(x_N)\,\overline{b(x_N)}\intd x_N$
and $\|a\| =\sqrt{(a,a)}$
for functions $a=a(x_N)$, $b=b(x_N)$ on $\BR_+$,
and also
$\Phi=\|\pa_N \ph\|^2+|\xi'|^2\|\ph\|^2$.
{\bf Step 1.}
We prove in this step
\begin{align}\label{161102_5}
&\la\sum_{J=1}^N\|u_J\|^2+ \mu_*\sum_{j,k=1}^{N-1}\|i\xi_k u_j\|^2
+\mu_*\|\sum_{j=1}^{N-1}i\xi_j u_j\|^2
+2\mu_*\|\pa_N u_N\|^2 \\
&+\mu_*\sum_{j=1}^{N-1}\|\pa_N u_j+i\xi_j u_N\|^2 
+(\nu_*-\mu_*)\|\ph\|^2
+\frac{\ka_*}{|\la|^2}\overline{\la}\Phi
=0. \notag
\end{align}

It now holds that for $j=1,\dots,N-1$
\begin{align*}
&\mu_*(\pa_N^2-|\xi'|^2)u_j(x_N)+\nu_*i\xi_j\ph(x_N) 
=\mu_*\sum_{k=1}^{N-1}i\xi_k\left(i\xi_k u_j(x_N)+i\xi_j u_k(x_N)\right) \\
&\quad +\mu_*\pa_N\left(\pa_N u_j(x_N)+i\xi_j u_N(x_N)\right)
+(\nu_*-\mu_*)i\xi_j \ph(x_N), \\
&\mu_*(\pa_N^2-|\xi'|^2) u_N(x_N)+\nu_*\pa_N \ph(x_N)
=\mu_*\sum_{k=1}^{N-1}i\xi_k\left(i\xi_k u_N(x_N)+\pa_N u_k(x_N)\right) \\
&\quad +2\mu_*\pa_N^2 u_N(x_N)+(\nu_*-\mu_*)\pa_N \ph(x_N).
\end{align*}
Thus \eqref{eq:101} and \eqref{eq:102} can be written as 
\begin{align}
0&=\la u_j(x_N)-\mu_*\sum_{k=1}^{N-1}i\xi_k\left(i\xi_k u_j(x_N)+i\xi_j u_k(x_N)\right) \label{eq:106}\\
& -\mu_*\pa_N\left(\pa_N u_j(x_N)+i\xi_j u_N(x_N)\right) -(\nu_*-\mu_*)i\xi_j \ph(x_N) \notag \\
& -\ka_*i\xi_j(\pa_N^2-|\xi'|^2)\rho(x_N), \notag \\
0&=\la u_N(x_N)-\mu_*\sum_{k=1}^{N-1}i\xi_k\left(i\xi_k u_N(x_N)+\pa_N u_k(x_N)\right)
-2\mu_*\pa_N^2 u_N(x_N)\label{eq:107} \\
&-(\nu_*-\mu_*)\pa_N \ph(x_N)-\ka_*\pa_N(\pa_N^2-|\xi'|^2)\rho(x_N). \notag
\end{align}
Multiplying \eqref{eq:106} by $\overline{u_j(x_N)}$
and integrating the resultant formula with respect to $x_N\in(0,\infty)$,
we observe that, by integration by parts together with \eqref{eq:103},
\begin{align*}
0&=\la\|u_j\|^2 + \mu_*\sum_{k=1}^{N-1}(i\xi_k u_j+i\xi_j u_k,i\xi_k u_j)
+\mu_*(\pa_N u_j+i\xi_j u_N,\pa_N u_j) \\ 
&+(\nu_*-\mu_*)(\ph, i\xi_j u_j)
+\ka_*((\pa_N^2-|\xi'|^2)\rho,i\xi_j u_j).
\end{align*}
Analogously, it follows from \eqref{eq:104}, \eqref{eq:107} that
\begin{align*}
0&=\la\|u_N\|^2+\mu_*\sum_{k=1}^{N-1}(i\xi_k u_N+\pa_N u_k,i\xi_k u_N) +2\mu_*\|\pa_Nu_N\|^2 \\
&+(\nu_*-\mu_*)(\ph,\pa_N u_N)
+\ka_*((\pa_N^2-|\xi'|^2)\rho(x_N),\pa_N u_N).
\end{align*}
Summing these identities,
we see by integration by parts with \eqref{eq:105} that
\begin{align*}
&\la\sum_{J=1}^N\|u_J\|^2+ \mu_*\sum_{j,k=1}^{N-1}(i\xi_k u_j+i\xi_j u_k,i\xi_k u_j)
+\mu_*\sum_{j=1}^{N-1}(\pa_N u_j+i\xi_j u_N,\pa_N u_j) \\
&+\mu_*\sum_{k=1}^{N-1}(i\xi_k u_N+\pa_N u_k,i\xi_k u_N) +2\mu_*\|\pa_Nu_N\|^2 
+(\nu_*-\mu_*)\|\ph\|^2
+\frac{\ka_*}{\la}\Phi=0,
\end{align*}
where we have used \eqref{eq:100} in order to obtain the last term of the left-hand side.
Combining the last identity with
\begin{align*}
&\sum_{j,k=1}^{N-1}(i\xi_k u_j+i\xi_j u_k,i\xi_k u_j) 
=\sum_{j,k=1}^{N-1}\|i\xi_k u_j\|^2 +\|\sum_{j=1}^{N-1}i\xi_j u_j\|^2, \\
&\sum_{j=1}^{N-1}(\pa_N u_j+i\xi_j u_N,\pa_N u_j)+ \sum_{k=1}^{N-1}(i\xi_k u_N+\pa_N u_k,i\xi_k u_N) 
=\sum_{j=1}^{N-1}\|\pa_N u_j+i\xi_j u_N\|^2
\end{align*}
furnishes \eqref{161102_5}.

{\bf Step 2.}
We take the real part of \eqref{161102_5} and the imaginary part of \eqref{161102_5} in order to obtain
\begin{align}
0&=(\Re\la)\sum_{J=1}^N\|u_J\|^2+ \mu_*\sum_{j,k=1}^{N-1}\|i\xi_k u_j\|^2 +\mu_*\|\sum_{j=1}^{N-1}i\xi_j u_j\|^2
+2\mu_*\|\pa_N u_N\|^2 \label{eq:152} \\
&+\mu_*\sum_{j=1}^{N-1}\|\pa_N u_j+i\xi_j u_N\|^2 
+(\nu_*-\mu_*)\|\ph\|^2
+\frac{\ka_*}{|\la|^2}(\Re\la)\Phi, \notag \\
0&=(\Im\la)\left(\sum_{J=1}^N\| u_J\|^2-\frac{\ka_*}{|\la|^2}\Phi\right).
\label{eq:153}
\end{align}
In addition, we have, by $ab\leq (a^2+b^2)/2$ with $a,b\geq0$,
\begin{equation*}
\|\ph\|^2\leq \sum_{j,k=1}^{N-1}\|i\xi_k u_j\|^2+\|\sum_{j=1}^{N-1}i\xi_j u_j\|^2 + 2\|\pa_N u_N\|^2,
\end{equation*}
which, combined with \eqref{eq:152}, furnishes 
\begin{equation}\label{eq:270}
0\geq (\Re\la)\sum_{J=1}^N\| u_J\|^2
+\mu_*\sum_{j=1}^{N-1}\|\pa_N u_j+i\xi_j u_N\|^2
+\nu_*\|\ph\|^2+\frac{\ka_*}{|\la|^2}(\Re\la)\Phi.
\end{equation}

{\bf Step 3.}
Recall that $\la\in\overline{\BC_+}\setminus\{0\}$. 
It then holds by \eqref{eq:270} that $\ph=0$,
which, combined with \eqref{eq:100}, implies $\rho=0$.

Next we show $(u_1,\dots,u_N)=(0,\dots,0)$.
This follows from \eqref{eq:270} immediately when $\Re\la>0$.
When $\Re\la=0$, one notes $\Im\la\neq 0$.
It thus holds for $\Re\la=0$ that $(u_1,\dots,u_N)=(0,\dots,0)$ by \eqref{eq:153} and by $\Phi=0$ following from $\ph=0$.

Summing up the above results, we have $(\rho,u_1,\dots,u_N)=(0,0,\dots,0)$,
which contradicts the fact that $(\rho,u_1,\dots,u_N)$ is a non-trivial solution.
This completes the proof of the lemma.
\end{proof}

Let us write $\BL^{-1} $ as follows:
\begin{equation*}
\BL^{-1} = \frac{1}{\det\BL}
\begin{pmatrix}
L_{11} & L_{12} \\
L_{21} & L_{22}
\end{pmatrix},
\end{equation*}
where
\begin{alignat*}{2}
L_{11} &=
t_2^2-|\xi'|^2,  &&\quad
L_{12} =
-t_1\{(\om_\la^2+|\xi'|^2)^2-4t_2\om_\la|\xi'|^2\}, \\
L_{21} &=
-(t_1^2-|\xi'|^2), && \quad
L_{22}
=t_2\{(\om_\la^2+|\xi'|^2)^2-4t_1\om_\la|\xi'|^2\}.
\end{alignat*}
We thus see that, by solving \eqref{SEs},
\begin{align}\label{bega}
\beta_N &= - \frac{2 t_1t_2\om_\la L_{11}}{\mu_*\det\BL}i\xi'\cdot\wh \Bg'(0)
+\frac{t_1t_2(\om_\la^2+|\xi'|^2)L_{11}}{\mu_*\det\BL}\wh g_N(0)
+\frac{\la L_{12}}{\det\BL}\wh h(0), \\
\ga_N &=
-\frac{2t_1t_2\om_\la L_{21}}{\mu_*\det\BL}i\xi'\cdot\wh \Bg'(0)
+\frac{t_1t_2(\om_\la^2+|\xi'|^2)L_{21}}{\mu_*\det\BL}\wh g_N(0)
+\frac{\la L_{22}}{\det\BL}\wh h(0), \notag
\end{align}
which, combined with \eqref{eq:51}, \eqref{eq:63}, and \eqref{eq:64},
gives the exact formulas of $\al_j$, $\beta_j$, $\ga_j$, and $\al_N$ for $j=1,\dots,N-1$.
Hence we obtain
\begin{align*}
\wh\rho(x_N)
&=
\left(\frac{t_1^2-|\xi'|^2}{\la t_1}\right)e^{-t_1 x_N}\beta_N 
+\left(\frac{t_2^2-|\xi'|^2}{\la t_2}\right) e^{-t_2 x_N}\ga_N, \\
\wh u_j(x_N)
&=
\frac{1}{\mu_*\om_\la} e^{-\om_\la x_N}\wh g_j(0)
+\frac{i\xi_j}{2\mu_*\om_\la^2}e^{-\om_\la x_N} \wh g_N(0) \notag \\
&+\frac{i\xi_j(4t_1\om_\la -3\om_\la^2-|\xi'|^2)}{2t_1 \om_\la^2}e^{-\om_\la x_N}\beta_N  \\
&+\frac{i\xi_j(4t_2\om_\la -3\om_\la^2-|\xi'|^2)}{2t_2 \om_\la^2}e^{-\om_\la x_N}\ga_N \notag \\
& - \frac{i\xi_j}{t_1}(e^{-t_1 x_N}-e^{-\om_\la x_N})\beta_N 
 - \frac{i\xi_j}{t_2}(e^{-t_2 x_N}-e^{-\om_\la x_N})\ga_N, \\ 
\wh u_N(x_N) &=
\frac{1}{2\mu_*\om_\la}e^{-\om_\la x_N} \wh g_N(0)  \\  
& +\left(\frac{2t_1\om_\la -\om_\la^2-|\xi'|^2}{2t_1\om_\la}\right)e^{-\om_\la x_N}\beta_N \\ 
&+\left(\frac{2t_2\om_\la -\om_\la^2-|\xi'|^2}{2t_2\om_\la}\right)e^{-\om_\la x_N}\ga_N \notag \\
& +\left(e^{-t_1 x_N}-e^{-\om_\la x_N}\right)\beta_N  
+\left(e^{-t_2 x_N}-e^{-\om_\la x_N}\right)\ga_N , \notag
\end{align*}
where we have used \eqref{eq:1}, \eqref{eq:60}, \eqref{eq:61}, and \eqref{eq:50}
in order to derive the representation formula of $\rho$.
Setting $\rho=\CF_{\xi'}^{-1}[\wh\rho(x_N)](x')$ and $u_J=\CF_{\xi'}^{-1}[\wh u_J(x_N)](x')$ $(J=1,\dots,N)$,
we see that $\rho$ and $\Bu=(u_1,\dots, u_N)^\SST$ solve the system \eqref{eq:half1}.

\section{Technical lemmas\label{sec:tec}}
In this section, we prove technical lemmas that are used in Section \ref{sec:proof1} below
in order to show the existence of $\CR$-bounded solution operator families associated with
the solution $(\rho,\Bu)$ of \eqref{eq:half1} obtained in Section \ref{sec:half}.
One assumes that $\mu_*$, $\nu_*$, and $\ka_*$ 
satisfy the same assumption as in Theorem \ref{theo:half}
and use the symbols introduced in Section \ref{sec:half} in the following.

Let $\de\geq 0$ and $\si\in(0,\pi/2)$, 
and let $m(\xi',\la)$ be a function defined on $(\BR^{N-1}\setminus\{0\})\times \Si_{\si,\de}$
that is infinitely many times differentiable with respect to $\xi'$
and analytic with respect to $\la$.
If there exists a real number $s$ such that
for any multi-index $\al'=(\al_1,\dots,\al_{N-1})\in\BN_0^{N-1}$ 
and $(\xi',\la)\in(\BR^{N-1}\setminus\{0\})\times\Si_{\si,\de}$
\begin{equation*}
\left|\pa_{\xi'}^{\al'}\left(\left(\la\frac{d}{d\la}\right)^n m(\xi',\la)\right)\right|
\leq C(|\la|^{1/2}+|\xi'|)^{s-|\al'|} \quad (n=0,1)
\end{equation*}
with some positive constant $C=C_{s,\al',\de,\si,\mu_*,\nu_*,\ka_*}$,
then $m(\xi',\la)$ is called a multiplier of order $s$ with type $1$.
If there exists a real number $s$ such that
for any multi-index $\al'=(\al_1,\dots,\al_{N-1})\in\BN_0^{N-1}$ 
and $(\xi',\la)\in(\BR^{N-1}\setminus\{0\})\times\Si_{\si,\de}$
\begin{equation*}
\left|\pa_{\xi'}^{\al'}\left(\left(\la\frac{d}{d\la}\right)^n m(\xi',\la)\right)\right|
\leq C(|\la|^{1/2}+|\xi'|)^{s}|\xi'|^{-|\al'|} \quad (n=0,1)
\end{equation*}
with some positive constant $C=C_{s,\al',\de,\si,\mu_*,\nu_*,\ka_*}$,
then $m(\xi',\la)$ is called a multiplier of order $s$ with type $2$.
In what follows, we denote the set of all multipliers of order $s$ with type $l$ $(l=1,2)$ on 
$(\BR^{N-1}\setminus\{0\})\times\Si_{\si,\de}$ by $\BBM_{\si,\de}^l(s)$.
For instance, 
$\xi_j/|\xi'|\in \BBM_{\si,0}^2(0)$ and
$\xi_j,\la^{1/2}\in\BBM_{\si,0}^1(1)$ for $j=1,\dots,N-1$,
and also $|\xi'|^2,\la \in\BBM_{\si,0}^1(2)$.

\begin{rema}
The sets $\BBM_{\si,\de}^l(s)$ are vector spaces on $\BC$.
In addition, $\BBM_{\si,\de_1}^l(s)\subset \BBM_{\si,\de_2}^l(s)$ for $0\leq \de_1\leq \de_2$,
and especially $\BBM_{\si,0}^l(s)\subset \BBM_{\si,\de}^l(s)$ for any $\de\geq 0$.
\end{rema}


At this point, we introduce two lemmas concerning multipliers.
The first lemma was proved by \cite[Lemma 5.1]{SS12},
and the second one by \cite[Lemma 5.4]{SS12}.

\begin{lemm}\label{lemm:algebra}
Let $r_1,r_2\in\BR$, $\de\geq 0$, and $\si\in(0,\pi/2)$. 
\begin{enumerate}[$(1)$]
\item
Given $l_j\in\BBM_{\si,\de}^1(r_j)$ $(j=1,2)$,
we have $l_1l_2\in \BBM_{\si,\de}^1(r_1+r_2)$.
\item
Given $m_j\in\BBM_{\si,\de}^j(r_j)$ $(j=1,2)$,
we have $m_1m_2\in\BBM_{\si,\de}^2(r_1+r_2)$.
\item
Given $n_j\in\BBM_{\si,\de}^2(r_j)$ $(j=1,2)$,
we have $n_1n_2\in\BBM_{\si,\de}^2(r_1+r_2)$.
\end{enumerate}
\end{lemm}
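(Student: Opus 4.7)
The plan is to prove all three parts uniformly by a single application of the Leibniz rule in $\xi'$, combined with the observation that the operator $\lambda\,d/d\lambda$ is a derivation on pointwise products and commutes with $\partial_{\xi'}^{\alpha'}$. The key elementary inequality that converts type $1$ factors into type $2$ factors for part (2) is
\[
(|\lambda|^{1/2}+|\xi'|)^{-k}\leq |\xi'|^{-k} \qquad (k\geq 0),
\]
which holds because $|\lambda|^{1/2}+|\xi'|\geq |\xi'|$.

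First I would observe that, for a product $m n$ of smooth functions,
\[
\lambda\frac{d}{d\lambda}(mn)=\Bigl(\lambda\frac{dm}{d\lambda}\Bigr)n+m\Bigl(\lambda\frac{dn}{d\lambda}\Bigr),
\]
so $(\lambda\,d/d\lambda)^n$ for $n=0,1$ distributes across products, and $\partial_{\xi'}^{\alpha'}$ commutes with $\lambda\,d/d\lambda$. Hence for either $n=0$ or $n=1$, applying Leibniz yields
\[
\partial_{\xi'}^{\alpha'}\Bigl(\bigl(\lambda\tfrac{d}{d\lambda}\bigr)^n(mn)\Bigr)
=\sum_{\beta'\leq\alpha'}\binom{\alpha'}{\beta'}\sum_{n_1+n_2=n}
\partial_{\xi'}^{\beta'}\bigl((\lambda\tfrac{d}{d\lambda})^{n_1}m\bigr)\,
\partial_{\xi'}^{\alpha'-\beta'}\bigl((\lambda\tfrac{d}{d\lambda})^{n_2}n\bigr),
\]
and each factor on the right is estimable by the hypothesis on $m$ and $n$ respectively.

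For part (1), each summand with $l_1\in\BBM^1_{\si,\de}(r_1)$, $l_2\in\BBM^1_{\si,\de}(r_2)$ is bounded by
$C(|\la|^{1/2}+|\xi'|)^{r_1-|\beta'|}(|\la|^{1/2}+|\xi'|)^{r_2-|\alpha'-\beta'|}=C(|\la|^{1/2}+|\xi'|)^{r_1+r_2-|\alpha'|}$, giving a type $1$ multiplier of order $r_1+r_2$. For part (3), each summand with $n_1\in\BBM^2_{\si,\de}(r_1)$, $n_2\in\BBM^2_{\si,\de}(r_2)$ is bounded by
$C(|\la|^{1/2}+|\xi'|)^{r_1+r_2}|\xi'|^{-|\beta'|}|\xi'|^{-|\alpha'-\beta'|}=C(|\la|^{1/2}+|\xi'|)^{r_1+r_2}|\xi'|^{-|\alpha'|}$, giving a type $2$ multiplier of order $r_1+r_2$.

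The only step requiring any thought is part (2), where the two factor types are mixed. Here I apply the displayed inequality to the type $1$ factor: writing $(|\la|^{1/2}+|\xi'|)^{r_1-|\beta'|}=(|\la|^{1/2}+|\xi'|)^{r_1}(|\la|^{1/2}+|\xi'|)^{-|\beta'|}\leq(|\la|^{1/2}+|\xi'|)^{r_1}|\xi'|^{-|\beta'|}$, the product of the two estimates becomes $C(|\la|^{1/2}+|\xi'|)^{r_1+r_2}|\xi'|^{-|\alpha'|}$, which is exactly the type $2$ bound of order $r_1+r_2$. Summing the finite Leibniz expansion then gives the result. This absorption of the type $1$ factor into a type $2$ bound is the only conceptual step; everything else is bookkeeping, so I expect no real obstacle.
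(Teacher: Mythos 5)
Your proof is correct. The paper itself does not reprove this lemma but cites \cite[Lemma 5.1]{SS12}, whose argument is the same routine application of the Leibniz rule together with the derivation property of $\la\,d/d\la$ and the elementary bound $(|\la|^{1/2}+|\xi'|)^{-k}\leq |\xi'|^{-k}$ for $k\ge 0$; you have reproduced exactly that argument, including the only nontrivial point, which is the absorption of the type $1$ factor into a type $2$ bound in part (2).
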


\begin{lemm}\label{lemm:R-bound0}
Let $q\in(1,\infty)$, $\de\geq 0$, and $\si\in(0,\pi/2)$. 
For
$k(\xi',\la)\in \BBM_{\si,\de}^1(0)$ and 
$l(\xi',\la)\in\BBM_{\si,\de}^2(0)$,
we define 
\begin{align*}
[K(\la)f] (x)
&=\int_0^\infty\CF_{\xi'}^{-1}\left[k(\xi',\la)\la^{1/2}e^{-\om_\la(x_N+y_N)}\wh f(\xi',y_N)\right](x')\intd y_N, \\
[L(\la)f](x)
&=\int_0^\infty\CF_{\xi'}^{-1}\left[l(\xi',\la)|\xi'|e^{-\om_\la(x_N+y_N)}\wh f(\xi',y_N)\right](x')\intd y_N,
\end{align*}
with $\la\in\Si_{\si,\de}$. 
Then the sets 
\begin{align*}
\{\la^n(d/d\la)^n K(\la)\mid \la\in\Si_{\si,\de}\}, \quad
\{\la^n(d/d\la)^n L(\la)\mid\la\in\Si_{\si,\de}\}
\end{align*}
are $\CR$-bounded on $\CL(L_q(\BR_+^N))$ for $n=0,1$,
and also their $\CR$-bounds do not exceed some constant $C_{N,q,\de,\si,\mu_*,\nu_*,\ka_*}$.
\end{lemm}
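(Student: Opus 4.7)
The plan is to reduce the $\CR$-boundedness on $L_q(\BR_+^N)$ to the $\CR$-boundedness of a family of $\BR^{N-1}$-Fourier multipliers parametrized by $z=x_N+y_N>0$, and then absorb the integration in the normal direction via the classical Hilbert double-integral inequality. Concretely, for each $z>0$ I would introduce the $\BR^{N-1}$-Fourier multiplier operators
\begin{align*}
[T_z^K(\la)g](x') &= \CF_{\xi'}^{-1}\bigl[k(\xi',\la)\la^{1/2}e^{-\om_\la z}\wh g(\xi')\bigr](x'), \\
[T_z^L(\la)g](x') &= \CF_{\xi'}^{-1}\bigl[l(\xi',\la)|\xi'|e^{-\om_\la z}\wh g(\xi')\bigr](x'),
\end{align*}
so that $[K(\la)f](x',x_N)=\int_0^\infty [T_{x_N+y_N}^K(\la)f(\cdot,y_N)](x')\intd y_N$, with an analogous formula for $L(\la)$.

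The central estimate to establish is that, for each multi-index $\al'\in\BN_0^{N-1}$ and each $n=0,1$,
\begin{equation*}
\Bigl|\pa_{\xi'}^{\al'}\bigl(\la\tfrac{d}{d\la}\bigr)^n\bigl(k(\xi',\la)\la^{1/2}e^{-\om_\la z}\bigr)\Bigr|\leq C_{\al'}\,z^{-1}|\xi'|^{-|\al'|}
\end{equation*}
uniformly in $(\xi',\la)\in(\BR^{N-1}\setminus\{0\})\times\Si_{\si,\de}$, together with the same type of bound for the symbol of $T_z^L(\la)$. By Lemma \ref{lemm:algebra} one has $k\la^{1/2}\in\BBM_{\si,\de}^1(1)$ and $l|\xi'|\in\BBM_{\si,\de}^2(1)$; from $\om_\la=\sqrt{|\xi'|^2+\la/\mu_*}$ and the estimate \eqref{160910_1} one also obtains $\om_\la\in\BBM_{\si,0}^1(1)$ together with $\Re\om_\la\geq c(|\la|^{1/2}+|\xi'|)$. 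I would then apply Bell's formula \eqref{Bell} to $e^{-\om_\la z}$ (with $f(t)=e^{-tz}$ and $t=\om_\la$), combine it with Leibniz's rule, and use the elementary bound $u^m e^{-cu}\leq C_m$ evaluated at $u=c(|\la|^{1/2}+|\xi'|)z/2$: the exponential converts every occurrence of the factor $z(|\la|^{1/2}+|\xi'|)$ into a constant, leaving a single uniform factor $z^{-1}$ multiplied by $(|\la|^{1/2}+|\xi'|)^{-|\al'|}\leq|\xi'|^{-|\al'|}$. The case $n=1$ is structurally identical because $\la(d/d\la)$ preserves the classes $\BBM_{\si,\de}^l(s)$ by definition, and applied to $e^{-\om_\la z}$ it produces the extra factor $-z\la(d\om_\la/d\la)$ whose contribution costs a single additional power of $(|\la|^{1/2}+|\xi'|)z$, which is again absorbed by the exponential.

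With the multiplier bound in hand, Lemma \ref{lemm:ES13} yields that $\{\la^n(d/d\la)^n T_z^K(\la)\mid \la\in\Si_{\si,\de}\}$ is $\CR$-bounded on $\CL(L_q(\BR^{N-1}))$ with bound at most $Cz^{-1}$, and similarly for $T_z^L(\la)$. Then, for any finite families $\{\la_j\}\subset\Si_{\si,\de}$ and $\{f_j\}\subset L_q(\BR_+^N)$, Minkowski's inequality in the $y_N$-variable together with this $\CR$-bound will give
\begin{equation*}
\Bigl(\int_0^1\Bigl\|\sum_j r_j(u)K(\la_j)f_j(\cdot,x_N)\Bigr\|_{L_q(\BR^{N-1})}^q\intd u\Bigr)^{1/q}
\leq C\int_0^\infty\frac{H(y_N)}{x_N+y_N}\intd y_N,
\end{equation*}
where $H(y_N)=\bigl(\int_0^1\|\sum_j r_j(u)f_j(\cdot,y_N)\|_{L_q(\BR^{N-1})}^q\intd u\bigr)^{1/q}$. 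Raising to the $q$-th power, integrating in $x_N\in(0,\infty)$, and invoking the classical Hilbert double-integral inequality (which is $L_q(\BR_+)$-bounded for $1<q<\infty$) converts this pointwise-in-$x_N$ estimate into the desired global bound, yielding the $\CR$-boundedness of $\{\la^n(d/d\la)^n K(\la)\mid \la\in\Si_{\si,\de}\}$ on $\CL(L_q(\BR_+^N))$; the argument for $L(\la)$ runs along the same lines, with Lemma \ref{lemm:algebra}(3) replacing Lemma \ref{lemm:algebra}(1) in the initial symbol computation.

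The main technical obstacle will be the symbol estimate above: keeping precise track of the interplay between the polynomial factors $(|\la|^{1/2}+|\xi'|)^{k}$ generated by iterated $\xi'$-derivatives of $\om_\la$ (via Bell's formula) and the exponential decay $|e^{-\om_\la z}|\leq e^{-c(|\la|^{1/2}+|\xi'|)z}$, so as to extract the precise factor $z^{-1}$ with constants independent of $\la$. Once this is secured, the reduction from $\CR$-boundedness on $\CL(L_q(\BR^{N-1}))$ to $\CR$-boundedness on $\CL(L_q(\BR_+^N))$ via Minkowski plus the Hilbert integral is routine.
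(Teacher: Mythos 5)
Your proof is mathematically sound, but it takes a genuinely different route from the one the paper (via its citation to Shibata--Shimizu \cite[Lemma 5.4]{SS12}) relies on. The paper's approach, which is laid out explicitly for the closely analogous operators in Lemma \ref{lemm:R-bound1}, writes $[M(\la)f](x)=\int_{\uhs}k_\la(x'-y',x_N+y_N)f(y)\,dy$, establishes a pointwise kernel-decay estimate $|\la^n(d/d\la)^n k_\la(x)|\leq C|x|^{-N}$ using \cite[Theorem 2.3]{SS01} for the $x'$-decay together with the exponential decay in $x_N$, and then invokes the general $\CR$-boundedness criterion of \cite[Proposition 3.3]{DHP03} for integral operators with such kernel bounds. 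You instead freeze $z=x_N+y_N$, prove the symbol estimate $|\pa_{\xi'}^{\al'}(\la d/d\la)^n(k\la^{1/2}e^{-\om_\la z})|\lesssim z^{-1}|\xi'|^{-|\al'|}$ (the chain of Bell's formula, Leibniz, and the exponential absorbing all the $z(|\la|^{1/2}+|\xi'|)$ factors is correct, and the argument for $l|\xi'|$ via the type-2 class works because $(|\la|^{1/2}+|\xi'|)^{-|\ga'|}\leq|\xi'|^{-|\ga'|}$), deduce that the $(N-1)$-dimensional multiplier family $\{T_z^K(\la)\}$ is $\CR$-bounded with bound $\lesssim z^{-1}$ (here one needs the $\BR^{N-1}$-analogue of Lemma \ref{lemm:ES13}, which holds verbatim in lower dimension, though you should say so), and then sum over $y_N$ by two applications of Minkowski and the Hilbert double-integral inequality. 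Both routes are valid; the SS12 kernel-decay route is more uniform with the rest of the paper and extends smoothly to the kernels $\CM_j$ of Lemma \ref{lemm:R-bound1}, where the $z$-dependence is not a pure exponential, whereas your route is more elementary in that it bypasses both DHP03 Proposition 3.3 and SS01 Theorem 2.3, at the cost of exploiting the specific layered structure $e^{-\om_\la(x_N+y_N)}$.
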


Let $N_1=N+1$ and $N_2=N^2+N+1$ in what follows,
and note that
\begin{align}\label{170427_9}
\om_\la^2&=|\xi'|^2+\frac{\la}{\mu_*}
=-\sum_{l=1}^{N-1}(i\xi_l)^2+\frac{\la}{\mu_*}, \\
t_k&=|\xi'|^2+s_k\la=-\sum_{l=1}^{N-1}(i\xi_l)^2+s_k\la \quad (k=1,2),
\notag
\end{align}
where $\om_\la$ and $t_k$ are defined as \eqref{omega} and \eqref{t1t2}, respectively.
Then we have 

\begin{coro}\label{coro:R-bound0}
Let $q\in(1,\infty)$ and $\si\in(0,\pi/2)$, and let $j=1,2$.
For $k^j(\xi',\la)\in\BBM_{\si,0}^1(j-2)$, we define operators $K^j(\la)$ by
\begin{equation*}
[K^j(\la)f](x)
=\CF_{\xi'}^{-1}\left[k^j(\xi',\la)e^{-\om_\la x_N}\wh f(\xi',0)\right](x'),
\end{equation*}
with $\la\in\Si_{\si,0}$ and $f\in W_q^j(\BR_+^N)$.
Then there exists operators $\wt K^j(\la)$, with
\begin{equation*}
\wt K^j(\la) \in \Hol(\Si_{\si,0},\CL(L_q(\BR_+^N)^{N_j},W_q^2(\BR_+^N))),
\end{equation*} 
such that for any $g\in W_q^1(\BR_+^N)$ and any $h\in W_q^2(\BR_+^N)$
\begin{equation*}
K^1(\la)g = \wt K^1(\la)(\nabla g,\la^{1/2}g), \quad
K^2(\la)h =\wt K^2(\la)(\nabla^2 h,\la^{1/2}\nabla h,\la h),
\end{equation*}
and also for $n=0,1$
\begin{equation*}
\CR_{\CL(L_q(\BR_+^N)^{N_j},\FB_q(\BR_+^N))}
\big(\big\{\la^n\big(d/d\la\big)^n\big(\CT_\la\wt K^j(\la)\big) \mid 
\la\in\Si_{\si,0}\big\}\big)\leq C
\end{equation*}
with a positive constant $C=C_{N,q,\si,\mu_*,\nu_*,\ka_*}$,
where $\FB_q(\BR_+^N)$, $\CT_\la$ are given in \eqref{ABset}.
\end{coro}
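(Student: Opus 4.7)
The plan is to convert the boundary trace $\wh f(\xi',0)$ appearing in the definition of $K^j(\la)$ into an integral representation over $y_N\in(0,\infty)$ whose integrand only involves the components of the input vector $(\nabla g,\la^{1/2}g)$ (for $j=1$) or $(\nabla^2 h,\la^{1/2}\nabla h,\la h)$ (for $j=2$), and then to match each resulting kernel against the form of Lemma \ref{lemm:R-bound0} by factoring out $\la^{1/2}$ or $|\xi'|$ appropriately. The starting point is the identity obtained by integrating $-\pa_{y_N}(e^{-\om_\la y_N}\wh f)$ from $y_N=0$ to $\infty$,
\[
\wh f(\xi',0) = \int_0^\infty \om_\la e^{-\om_\la y_N}\wh f(\xi',y_N)\,dy_N - \int_0^\infty e^{-\om_\la y_N}\widehat{\pa_N f}(\xi',y_N)\,dy_N,
\]
combined with the algebraic identity $\om_\la^2=|\xi'|^2+\la/\mu_*$ from \eqref{170427_9}, which yields
\[
\om_\la\wh f = -\sum_{l=1}^{N-1}\frac{i\xi_l}{\om_\la}\widehat{\pa_l f} + \frac{\la^{1/2}}{\mu_*\om_\la}\widehat{\la^{1/2}f};
\]
since $|\om_\la|\sim|\la|^{1/2}+|\xi'|$ on $\Si_{\si,0}$, both $i\xi_l/\om_\la$ and $\la^{1/2}/\om_\la$ lie in $\BBM_{\si,0}^1(0)$.

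Substituting the second identity into the first and plugging into the definition of $K^1(\la)g$ gives a formula for $K^1(\la)g$ expressed through $\widehat{\pa_l g}$, $\widehat{\la^{1/2}g}$, and $\widehat{\pa_N g}$; I would then define $\wt K^1(\la)(G_1,\dots,G_{N+1})$ by replacing these particular traces by arbitrary $L_q$ data, and verify $\wt K^1(\la)(\nabla g,\la^{1/2}g)=K^1(\la)g$. For $K^2(\la)h$, the tangential terms arising from $\om_\la\wh h$ become $(1/\om_\la)\widehat{\pa_l^2 h}$ through $(i\xi_l)\widehat{\pa_l h}=\widehat{\pa_l^2 h}$, and the accompanying $(1/(\mu_*\om_\la))\widehat{\la h}$ already uses the input $\la h$. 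The leftover $\widehat{\pa_N h}$ integral is not yet in admissible form, since $\pa_N h$ itself is not an input; I would treat it by inserting $\om_\la/\om_\la$ and applying the same expansion to $\pa_N h\in W^1_q$, giving
\[
\int_0^\infty e^{-\om_\la(x_N+y_N)}\widehat{\pa_N h}\,dy_N = -\sum_{l=1}^{N-1}\int_0^\infty\frac{i\xi_l}{\om_\la^2}e^{-\om_\la(x_N+y_N)}\widehat{\pa_l\pa_N h}\,dy_N + \int_0^\infty\frac{\la^{1/2}}{\mu_*\om_\la^2}e^{-\om_\la(x_N+y_N)}\widehat{\la^{1/2}\pa_N h}\,dy_N,
\]
whose integrands involve only components of $(\nabla^2 h,\la^{1/2}\nabla h,\la h)$; this defines $\wt K^2(\la)$ analogously.

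Having these representations, I would apply $\pa_i\pa_j$, $\la^{1/2}\pa_i$, or $\la$ under each integral sign to obtain kernels of the form $m(\xi',\la)e^{-\om_\la(x_N+y_N)}$ whose prefactor $m$ has overall order $1$ (namely the order $-1$ of the post-substitution multiplier plus the order $2$ contributed by $\CT_\la$). Each such $m$ must then be rewritten either as $k(\xi',\la)\la^{1/2}$ with $k\in\BBM_{\si,0}^1(0)$ (to invoke the $K$-part of Lemma \ref{lemm:R-bound0}) or as $l(\xi',\la)|\xi'|$ with $l\in\BBM_{\si,0}^2(0)$ (to invoke the $L$-part), with Lemma \ref{lemm:algebra} providing the multiplier algebra; the resulting $\CR$-bounds assemble by the additivity and composition properties of $\CR$-bounds recalled in Remark \ref{rema:half}, and holomorphy of $\wt K^j(\la)$ in $\la$ is inherited from the kernel representation. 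I expect the main obstacle to lie in this last step: for kernels produced by $\pa_N$-differentiation of the exponential, which contribute a factor $-\om_\la$, the naive factoring as $k\la^{1/2}$ fails because $\om_\la/\la^{1/2}$ is unbounded in $|\xi'|$, so one must switch to factoring out $|\xi'|$ and absorb $\om_\la/|\xi'|\in\BBM_{\si,0}^2(0)$ as a type-$2$ multiplier. This case distinction has to be handled for each of the several kernels produced by expanding $\CT_\la\wt K^j(\la)$, and care is needed to confirm that every resulting prefactor indeed lies in $\BBM_{\si,0}^1(0)$ or $\BBM_{\si,0}^2(0)$.
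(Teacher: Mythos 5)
Your overall plan matches the paper's: pass to the volume-integral form of the trace by integrating $-\pa_{y_N}\bigl(e^{-\om_\la(x_N+y_N)}\wh f(\xi',y_N)\bigr)$ over $(0,\infty)$ (formula \eqref{vole}), and then use $\om_\la^2=|\xi'|^2+\la/\mu_*$ to rewrite the boundary datum through the permitted input components before invoking Lemma \ref{lemm:R-bound0}. But the final step, where you propose to deal with the factor $\om_\la$ produced by normal differentiation by factoring as $(\om_\la/|\xi'|)\cdot|\xi'|$ with $\om_\la/|\xi'|\in\BBM_{\si,0}^2(0)$, is wrong. At $\al'=0$ the type-$2$ order-$0$ bound would require $|\om_\la|/|\xi'|\le C$, but $|\om_\la|\sim|\la|^{1/2}+|\xi'|$ stays bounded away from zero as $|\xi'|\to0$ for fixed $\la$, so the ratio is unbounded; indeed $|\xi'|^{-1}$ does not lie in $\BBM_{\si,0}^2(s)$ for any $s$, and hence neither does $\om_\la/|\xi'|$. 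The bounded quantities are the reciprocals: $|\xi'|/\om_\la\in\BBM_{\si,0}^2(0)$ and $\la^{1/2}/\om_\la\in\BBM_{\si,0}^1(0)$.

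The correct device is the one you already use twice earlier: apply $\om_\la^2=|\xi'|^2+\la/\mu_*=-\sum_l(i\xi_l)^2+\la/\mu_*$ once more, splitting the $\pa_N^2$-contribution into a \emph{sum} of an $|\xi'|$-type kernel and a $\la^{1/2}$-type kernel, each with an admissible order-$0$ coefficient. For instance, for the $\wh{\pa_N g}$ term of your $\wt K^1$ the post-$\pa_N^2$ symbol is
\begin{equation*}
\om_\la^2 k^1 = \bigl(|\xi'|k^1\bigr)\,|\xi'| + \bigl(\la^{1/2}k^1/\mu_*\bigr)\,\la^{1/2},
\end{equation*}
and by Lemma \ref{lemm:algebra} together with \eqref{170426_1} one has $|\xi'|k^1\in\BBM_{\si,0}^2(0)$ and $\la^{1/2}k^1/\mu_*\in\BBM_{\si,0}^1(0)$. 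The paper performs this expansion already in the definition of $\wt K^j$ (inserting $\om_\la^2/\om_\la^2$ in front of $\wh{\pa_N g}$), so that every pre-$\CT_\la$ symbol carries an explicit $\la^{1/2}$ or $|\xi'|$ factor and every post-$\CT_\la$ symbol is directly of the form required by Lemma \ref{lemm:R-bound0}; your variant, which defers the decomposition until after $\CT_\la$, is equivalent once you replace the $\om_\la/|\xi'|$ step by this further splitting, but as written that step does not go through.
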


\begin{proof}
We only prove the case $j=1$.

For functions $a(x_N)$, $b(x_N)$ $(x_N\geq 0)$
with $a(x_N+y_N)b(y_N)\to 0$ as $y_N\to\infty$,
we observe that
\begin{align}\label{vole}
&a(x_N)b(0) = -\int_0^\infty\frac{d}{dy_N}\left(a(x_N+y_N)b(y_N)\right)\intd y_N \\
&=-\int_0^\infty(\pa_N a)(x_N+y_N)b(y_N)\intd y_N 
-\int_0^\infty a(x_N+y_N)(\pa_N b)(y_N)\intd y_N.
\notag
\end{align}
By this relation and \eqref{170427_9}, 
\begin{align*}
[K^1(\la)g](x)&=
\int_0^\infty \CF_{\xi'}^{-1}
\left[\frac{k^1(\xi',\la)}{\mu_*\om_\la}\la^{1/2}e^{-\om_\la(x_N+y_N)}\wh{\la^{1/2}g}(\xi',y_N)\right](x')\intd y_N \\
&-\sum_{l=1}^{N-1}\int_0^\infty
\CF_{\xi'}^{-1}\left[
\frac{i\xi_l k^1(\xi',\la)}{|\xi'|\om_\la}|\xi'| e^{-\om_\la(x_N+y_N)}\wh {\pa_l g}(\xi',y_N)\right](x')\intd y_N \\
&-\int_0^\infty \CF_{\xi'}^{-1}
\left[\frac{\la^{1/2} k^1(\xi',\la)}{\mu_*\om_\la^2}\la^{1/2}e^{-\om_\la(x_N+y_N)}\wh{\pa_N g}(\xi',y_N)\right](x')\intd y_N \\
&-\sum_{l=1}^{N-1}
\int_0^\infty\CF_{\xi'}^{-1}\left[\frac{|\xi'|k^1(\xi',\la)}{\om_\la^2}|\xi'| e^{-\om_\la(x_N+y_N)}\wh{\pa_N g}(\xi',y_N)\right](x')\intd y_N \\
&=:\wt K^1(\la)(\nabla g,\la^{1/2}g).
\end{align*}
We combine Lemma \ref{lemm:algebra} and the assumption for $k^1(\xi',\la)$ with
\begin{equation}\label{170426_1}
\om_\la^s \in\BBM_{\si,0}^1(s) \quad \text{for any $s\in\BR$}
\end{equation}
(cf. \cite[Lemma 5.2]{SS12}) in order to obtain
\begin{align*}
\frac{k^1(\xi',\la)}{\mu_*\om_\la}, \frac{\la^{1/2} k^1(\xi',\la)}{\mu_*\om_\la^2}\in\BBM_{\si,0}^1(-2), \quad
\frac{i\xi_l k^1(\xi',\la)}{|\xi'|\om_\la}, \frac{|\xi'|k^1(\xi',\la)}{\om_\la^2}\in\BBM_{\si,0}^2(-2).
\end{align*}
Thus Lemmas \ref{lemm:algebra}, \ref{lemm:R-bound0} and \eqref{170426_1} yield
that $\CT_\la\wt K^1(\la)$ has the required properties of Corollary \ref{coro:R-bound0}. 
This completes the proof of the corollary.
\end{proof}

\subsection{Analysis of symbols\label{subsec:5_2}}
In this subsection, we estimate several symbols appearing
in the representation formulas obtained in Section \ref{sec:half}.

\begin{lemm}\label{est:1}
Let $\si_1\in(0,\pi/2)$ and $\si_2\in(\si_*^w,\pi/2)$.
\begin{enumerate}[$(1)$]
\item\label{est:1_1}
There is a positive constant $C_{\mu_*}$ such that
\begin{equation*}
\Re\om_\la\geq C_{\mu_*}\{\sin(\si_1/2)\}^{3/2}(|\la|^{1/2}+|\xi'|),
\quad (\xi',\la)\in\BR^{N-1}\times\Si_{\si_1,0}.
\end{equation*}
\item\label{est:1_2}
There is a positive constant $C_{\si_2,\mu_*,\nu_*,\ka_*}$ such that
for $j=1,2$
\begin{equation*}
\Re t_j\geq C_{\si_2,\mu_*,\nu_*,\ka_*}(|\la|^{1/2}+|\xi'|),
\quad (\xi',\la)\in\BR^{N-1}\times\Si_{\si_2,0}.
\end{equation*}
\end{enumerate}
\end{lemm}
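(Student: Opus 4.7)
The strategy is to combine two elementary facts about the principal branch of the square root in a sector: that the modulus of a sum $z + a$ ($a\ge0$) is controlled by \eqref{160910_1}, and that for $z$ lying in a sector $\Si_{\tau,0}$ the quantity $\Re\sqrt z = |z|^{1/2}\cos((\arg z)/2)$ admits a uniform lower bound by $|z|^{1/2}\sin(\tau/2)$, since $|(\arg z)/2|<\pi/2-\tau/2$.

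For part \eqref{est:1_1}, I would write $\om_\la^2=|\xi'|^2+\la/\mu_*$. For $\la\in\Si_{\si_1,0}$ the complex number $|\xi'|^2+\la/\mu_*$ has argument with modulus no larger than $|\arg\la|<\pi-\si_1$ (adding a nonnegative real number drags the argument toward $0$), so it still lies in $\Si_{\si_1,0}$. Applying \eqref{160910_1} to $\la/\mu_*$ with $a=|\xi'|^2$ gives
\[
\bigl||\xi'|^2+\la/\mu_*\bigr|\ge \sin(\si_1/2)\bigl(|\xi'|^2+|\la|/\mu_*\bigr).
\]
Taking square roots and using the angular bound $\cos((\arg z)/2)\ge\sin(\si_1/2)$ yields $\Re\om_\la\ge\{\sin(\si_1/2)\}^{1/2}\sin(\si_1/2)\sqrt{|\xi'|^2+|\la|/\mu_*}$. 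A final application of $\sqrt{a^2+b^2}\ge(a+b)/\sqrt2$ with $a=|\xi'|$, $b=|\la|^{1/2}/\mu_*^{1/2}$ delivers the claimed bound with $C_{\mu_*}=(2\mu_*)^{-1/2}\wedge 2^{-1/2}$ times the $\{\sin(\si_1/2)\}^{3/2}$ factor.

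For part \eqref{est:1_2}, the only new point is that $s_j$ is either a positive real number (when $\eta_*^w>0$) or a complex number of argument $\pm\si_*^w$ (when $\eta_*^w<0$); in either case $|\arg s_j|\le\si_*^w$ and $|s_j|$ is a constant depending only on $\mu_*,\nu_*,\ka_*$. Thus for $\la\in\Si_{\si_2,0}$ one has
\[
|\arg(s_j\la)|\le \si_*^w+(\pi-\si_2)=\pi-(\si_2-\si_*^w),
\]
so $s_j\la\in\Si_{\si_2-\si_*^w,0}$, and the hypothesis $\si_2>\si_*^w$ makes this a genuine (open) sector. Now repeat the argument of part \eqref{est:1_1} verbatim with $\la/\mu_*$ replaced by $s_j\la$ and $\si_1$ replaced by $\si_2-\si_*^w$: apply \eqref{160910_1} to get $|s_j\la+|\xi'|^2|\ge\sin((\si_2-\si_*^w)/2)(|s_j||\la|+|\xi'|^2)$, then use the angular bound to obtain $\Re t_j\ge\{\sin((\si_2-\si_*^w)/2)\}^{3/2}(|s_j||\la|+|\xi'|^2)^{1/2}/\sqrt2$, and finally absorb the constant $|s_j|$ into a single constant depending only on $\si_2,\mu_*,\nu_*,\ka_*$.

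There is no real obstacle; the only care point is to notice that multiplying by $s_j$ rotates the sector by at most $\si_*^w$, which is precisely why the threshold $\si_*^w$ appears in the hypothesis on $\si_2$. The estimate is uniform in $\xi'\in\BR^{N-1}$ because every bound is expressed purely in terms of $|\xi'|$ and $|\la|$.
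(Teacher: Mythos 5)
Your proof is correct and follows essentially the same route as the paper: for (1), bound the argument of $|\xi'|^2+\la/\mu_*$, combine the angular estimate $\cos(\te/2)\ge\sin(\si_1/2)$ with \eqref{160910_1}, and pass to $|\la|^{1/2}+|\xi'|$; for (2), observe that $|\arg s_j|\le\si_*^w$, so $s_j\la\in\Si_{\si_2-\si_*^w,0}$. The only cosmetic difference is that the paper dispatches (2) in one line by noting $t_j=\om_z$ with $z=\mu_*s_j\la$ and citing (1), while you rederive the estimate explicitly (and the $/\sqrt2$ appears one step earlier in your write-up than it should, a trivial bookkeeping slip).
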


\begin{proof}
\eqref{est:1_1}. 
Let $\mu_*^{-1}\la+|\xi'|^2=|\mu_*^{-1}\la+|\xi'|^2|e^{i\te}$.
Then $-(\pi-\si_1)\leq \te\leq  \pi-\si_1$, and
\begin{equation*}
\Re\om_\la =|\mu_*^{-1}\la+|\xi'|^2|^{1/2}\cos(\te/2)
\geq \min(1,\mu_*^{-1/2})|\la+|\xi'|^2|^{1/2}\sin(\si_1/2).
\end{equation*}
Combining this inequality with \eqref{160910_1} yields the required inequality,
which completes the proof of Lemma \ref{est:1} \eqref{est:1_1}.

\eqref{est:1_2}.
Note that $s_j\la\in\Si_{\si_2-\si_*^w,0}$ $(j=1,2)$ with $\si_2-\si_*^w\in(0,\pi/2)$.
Since $t_j=\om_z$ for $z=\mu_*s_j\la\in\Si_{\si_2-\si_*^w,0}$,
we have the required inequality by \eqref{est:1_1}.
This completes the proof of Lemma \ref{est:1} \eqref{est:1_2}.
\end{proof}

Let us define the following symbols: for $j=1,2$, 
\begin{align*}
\Fm_j(\xi',\la)
&=
\la^{-1}(t_j+\om_\la)\{(\om_\la^2+|\xi'|^2)^2-4t_j\om_\la|\xi'|^2\}, \\
\Fp_j(\xi',\la)
&=
\la^{-1}(t_j+\om_\la)(4t_j\om_\la-3\om_\la^2-|\xi'|^2), \notag \\
\Fq_j(\xi',\la)
&=\la^{-1}(t_j+\om_\la)(2t_j\om_\la -\om_\la^2-|\xi'|^2). \notag
\end{align*}
On the other hand, we have
\begin{align*}
(\om_\la^2+|\xi'|^2)^2-4t_j\om_\la|\xi'|^2
&=(\om_\la^2-|\xi'|^2)^2-4|\xi'|^2\om_\la(t_j-\om_\la), \\
4t_j\om_\la-3\om_\la^2-|\xi'|^2
&=4\om_\la(t_j-\om_\la)+\om_\la^2-|\xi'|^2, \\
2t_j\om_\la -\om_\la^2-|\xi'|^2
&=2\om_\la(t_j-\om_\la)+\om_\la^2-|\xi'|^2,
\end{align*}
which, combined with $\om_\la^2-|\xi'|^2=\mu_*^{-1}\la$ and
$t_j^2-\om_\la^2=(s_j-\mu_*^{-1})\la$ following from \eqref{170427_9}, furnishes that
\begin{align*}
\Fm_j(\xi',\la)
&=\mu_*^{-2}\la(t_j+\om_\la)-4(s_j-\mu_*^{-1})|\xi'|^2\om_\la, \\
\Fp_j(\xi',\la)
&=(4s_j-3\mu_*^{-1})\om_\la+\mu_*^{-1}t_j, \notag \\
\Fq_j(\xi',\la)
&=(2s_j-\mu_*^{-1})\om_\la+\mu_*^{-1}t_j. \notag
\end{align*}
Analogously, $\det\BL$ can be written as
\begin{equation}\label{170321_3}
\det\BL = \frac{\la(t_2-t_1)\Fl_1(\xi',\la)}{t_1(t_1+\om_\la)} = \frac{\la(t_2-t_1)\Fl_2(\xi',\la)}{t_2(t_2+\om_\la)},
\end{equation}
where we have set
\begin{align*}
\Fl_j(\xi',\la)
&=\mu_*^{-2}\la t_j(t_j+\om_\la)(t_2^2+t_2t_1+t_1^2-|\xi'|^2) \\
&+4\om_\la|\xi'|^2
\{s_jt_j\om_\la(t_j+\om_\la)-(s_j-\mu_*^{-1})t_1t_2(t_2+t_1)\}.
\end{align*}

Now we prove

\begin{lemm}\label{lemm:symbols}
There is a constant $\si_*\in(\si_*^w,\pi/2)$ such that,
for any  $\si\in(\si_*,\pi/2)$,
there exists a positive constant $C_{\si,\mu_*,\nu_*,\ka_*}$ such that,
for any $\la\in\Si_{\si,0}$ and $\xi'\in\BR^{N-1}$, we have for $j=1,2$
\begin{equation}\label{170315_1}
|\Fl_j(\xi',\la)|\geq C_{\si,\mu_*,\nu_*,\ka_*}(|\la|^{1/2}+|\xi'|)^6. 
\end{equation}
\end{lemm}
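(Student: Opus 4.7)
The plan is to prove Lemma \ref{lemm:symbols} by a homogeneity-plus-compactness reduction, combined with the non-vanishing of $\det\BL$ on the closed right half-plane established in Lemma \ref{lemm:detL}. Under the parabolic rescaling $(\xi',\la)\mapsto(r\xi',r^2\la)$ with $r>0$, the principal square roots obey $\om_\la\mapsto r\om_\la$ and $t_j\mapsto rt_j$, so going through each monomial of the defining formula for $\Fl_j$ shows that every term has total scaling degree $6$; hence $\Fl_j(r\xi',r^2\la)=r^6\Fl_j(\xi',\la)$. Taking $r=1/(|\la|^{1/2}+|\xi'|)$ reduces the desired inequality \eqref{170315_1} to a uniform positive lower bound for $|\Fl_j|$ on the compact ``unit sphere'' $S:=\{(\xi',\la):|\la|^{1/2}+|\xi'|=1\}$ intersected with the admissible region of $\la$.

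Next I show that $\Fl_j$ does not vanish anywhere on $\BR^{N-1}\times(\overline{\BC_+}\setminus\{0\})$. From the first identity in \eqref{170321_3}, $\Fl_j=\det\BL\cdot t_j(t_j+\om_\la)/[\la(t_2-t_1)]$, and on this set every factor in the numerator and denominator is non-zero: $\det\BL\neq 0$ by Lemma \ref{lemm:detL}; $\la\neq 0$ by hypothesis; $t_1\neq t_2$ because $s_1\neq s_2$ (from $\eta_*^w\neq 0$) and $\la\neq 0$; while $\Re t_j>0$ and $\Re\om_\la>0$ (Lemma \ref{est:1}) prevent both $t_j$ and $t_j+\om_\la$ from vanishing. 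The remaining boundary point $\la=0$, $|\xi'|=1$ of $S$ is handled by direct substitution: setting $\om_\la=t_1=t_2=|\xi'|$ makes the first summand vanish and collapses the second to $\Fl_j(\xi',0)=8\mu_*^{-1}|\xi'|^6>0$.

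To extend this non-vanishing to a slightly larger sector, I argue by contradiction. If no such $\si_*$ exists, one can extract $\si_n\nearrow\pi/2$ and $(\xi'_n,\la_n)\in S$ with $\la_n\in\overline{\Si_{\si_n,0}}$ and $|\Fl_j(\xi'_n,\la_n)|\to 0$. Compactness of $S$ yields a convergent subsequence with limit $(\xi'_0,\la_0)\in S$, and $|\arg\la_n|\leq\pi-\si_n\to\pi/2$ forces $\la_0\in\overline{\BC_+}$. The functions $\om_\la$ and $t_j$ are jointly continuous on $\BR^{N-1}\times\overline{\Si_{\si,0}}$ for any $\si>\si_*^w$ (the quantities $|\xi'|^2+\la/\mu_*$ and $|\xi'|^2+s_j\la$ avoid the negative real axis precisely because $\si>\si_*^w$, so the principal square roots are continuous), hence so is $\Fl_j$, giving $\Fl_j(\xi'_0,\la_0)=0$ and contradicting the preceding paragraph. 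The main obstacle lies not in any intricate calculation but in the careful verification of this joint continuity up to the boundary of $\BC_+$; the decisive non-vanishing input on $\overline{\BC_+}\setminus\{0\}$ has already been secured by the energy argument in Lemma \ref{lemm:detL}.
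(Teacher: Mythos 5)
Your proof is correct, and the key ingredients agree with the paper's (the degree-$6$ parabolic homogeneity of $\Fl_j$, the factorization \eqref{170321_3} to transfer non-vanishing from $\det\BL$, and a compactness-plus-continuity step), but the overall architecture is genuinely different. The paper splits $\BR^{N-1}\times\Si_{\si,0}$ into three regimes: $|\xi'|^2/|\la|\le R_1$, $|\la|/|\xi'|^2\le R_2$, and the intermediate range. In the two extreme regimes it uses explicit asymptotic expansions of $t_j$ and $\om_\la$ to extract the leading term of $\Fl_j$ (whose coefficient is shown to be non-zero using $s_1+s_2=(\mu_*+\nu_*)/\ka_*$ and $s_1s_2=1/\ka_*$), and only in the intermediate regime does it appeal to compactness, on a set $K_{\wt\si}$ that is carefully cut away from the ``poles'' $\wt\xi'=0$ and $\wt\la=0$. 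You instead run a single compactness argument on the full scaled sphere $S=\{|\la|^{1/2}+|\xi'|=1\}$, which forces you to verify continuity and non-vanishing of $\Fl_j$ up to and including $\la=0$ and $\xi'=0$. You do this: $\la\ne 0$ (including $\xi'=0$) is covered by the factorization and Lemma \ref{lemm:detL}, and $\la=0$, $|\xi'|=1$ is covered by the explicit evaluation $\Fl_j(\xi',0)=8\mu_*^{-1}|\xi'|^6$, which matches the paper's $|y|\to 0$ asymptotics. Your route avoids the three-case split at the cost of having to check that the principal square roots are jointly continuous on $\BR^{N-1}\times\overline{\Si_{\si,0}}$ for $\si>\si_*^w$ (which you justify by the argument-of-$\la$ constraint that $\si_*^w$ encodes), while the paper's route avoids the boundary continuity check at the cost of the asymptotic case analysis. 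Both yield a constant that depends only on $N,q,\si,\mu_*,\nu_*,\ka_*$, as the lemma requires.

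One small point worth making explicit in a final write-up: the compact set you need is $S\cap\{\la\in\overline{\Si_{\si_*,0}}\}$ for some fixed $\si_*\in(\si_*^w,\pi/2)$, and the conclusion for every $\si\in(\si_*,\pi/2)$ then follows immediately because $\Si_{\si,0}\subset\Si_{\si_*,0}\subset\overline{\Si_{\si_*,0}}$, so the constant can in fact be taken independent of $\si$. You reach this via a contradiction along $\si_n\nearrow\pi/2$, which is fine; alternatively one can observe that the sets $S\cap\{\la\in\overline{\Si_{\si,0}}\}$ form a nested family of compacta shrinking to $S\cap\{\la\in\overline{\BC_+}\}$, so if $\Fl_j$ vanished on each of them it would vanish on the intersection, contradicting Step 2.
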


\begin{proof}
Let $\wt\si$ be any number of $(\si_*^w,\pi/2)$.

First, we consider the case $|\xi'|^2/|\la|\leq R_1$ for $(\xi',\la)\in\BR^{N-1}\times\Si_{\wt\si,0}$
and for some sufficiently small positive number $R_1$ determined below.
Let $z=|\xi'|^2/\la$. Then, 
\begin{equation*}
t_j = \sqrt{s_j\la}\left(1+O(z)\right), \quad 
\om_\la = \sqrt{\la/\mu_*}(1+O(z)) \quad  \text{as $|z|\to 0$,} 
\end{equation*}
which implies that
\begin{equation*}
\Fl_j(\xi',\la)
=\mu_*^{-2}\la^3\sqrt{s_j}(\sqrt{s_j}+\sqrt{1/\mu_*})(s_2+\sqrt{s_2s_1}+s_1)(1+O(z)).
\end{equation*}
%
%
Here we note that, by $s_1+s_2=(\mu_*+\nu_*)/\ka_*$ and $s_1s_2=\ka_*^{-1}$,
\begin{equation*}
s_2+\sqrt{s_2s_1}+s_1
=\frac{\mu_*+\nu_*}{\ka_*}+\frac{1}{\sqrt{\ka_*}}>0.
\end{equation*}
Thus there exists a constant $R_1\in(0,1)$
such that, for any $(\xi',\la)\in\BR^{N-1}\times\Si_{\wt\si,0}$ with $|\xi'|^2/|\la|\leq R_1$,
we have \eqref{170315_1}.

Next, we consider the case $|\la|/|\xi'|^2\leq R_2$ for $(\xi',\la)\in\BR^{N-1}\times\Si_{\wt\si,0}$ and
for some sufficiently small positive number $R_2$ determined below. 
Let $y=\la/|\xi'|^2$. Then,
\begin{equation*}
\om_\la=|\xi'|(1+O(y)), \quad t_j = |\xi'|(1+O(y)) \quad \text{as $|y|\to 0$,} 
\end{equation*}
which yields that
$\Fl_j(\xi',\la)=8\mu_*^{-1}|\xi'|^6(1+O(y))$.
Hence there is a constant $R_2\in(0,1)$
such that, for any $(\xi',\la)\in\BR^{N-1}\times\Si_{\wt\si,0}$ with $|\la|/|\xi'|^2\leq R_2$,
we have \eqref{170315_1}.

Finally, we consider the case $|\xi'|^2/|\la|\geq R_1/2$ and $|\la|/|\xi'|^2\geq R_2/2$, i.e.
\begin{equation}\label{161118_1}
\frac{R_2}{2} |\xi'|^2 \leq |\la| \leq \frac{2}{R_1}|\xi'|^2.
\end{equation}
Let $\wt\xi'$, $\wt\la$, $\wt t_j$, and $\wt\om_\la$ be given by
\begin{align*}
&\wt\xi'=(|\la|^{1/2}+|\xi'|)^{-1}\xi',\quad
\wt\la = (|\la|^{1/2}+|\xi'|)^{-2}\la, \\
&\wt{t_j}=\sqrt{|\wt\xi'|^2+s_j\wt\la}, \quad
\wt\om_\la=\sqrt{|\wt\xi'|^2+\mu_*^{-1}\wt\la}.
\end{align*}
Then we observe that
\begin{equation}\label{170321_1}
\Fl_j(\xi',\la)=(|\la|^{1/2}+|\xi'|)^6\Fl_j(\wt\xi',\wt\la)
\end{equation}
and that \eqref{161118_1} is equivalent to
the condition: $r_1 \leq |\wt \xi'| \leq r_2$, $r_3 \leq |\wt \la| \leq r_4$, where
\begin{alignat*}{2}
r_1 &=\left(\sqrt{\frac{2}{R_1}}+1\right)^{-1}, \quad
&r_2 &=\left(\sqrt{\frac{R_2}{2}}+1\right)^{-1}, \\
r_3 &= \left(\sqrt{\frac{2}{R_2}}+1\right)^{-2}, \quad
&r_4 &=\left(\sqrt{\frac{R_1}{2}}+1\right)^{-2}.
\end{alignat*}
We define a compact set $K_{\wt\si}$ by
\begin{equation*}
K_{\wt\si} = \{(\wt\xi',\wt\la)\in \BR^{N-1}\cap\overline{\Si_{\wt\si,0}}
\mid r_1\leq |\wt\xi'|\leq r_2, r_3\leq |\wt\la|\leq r_4\}.
\end{equation*}
%
%
%
%
%
%
%
%
%
%
%
Since $\Fl_j(\xi',\la)$ are continuous functions on
$\BR^{N-1}\times(\overline{\Si_{\wt\si,0}}\setminus\{0\})$
and since $\Fl_j(\xi',\la)\neq 0$ on $\BR^{N-1}\times(\overline{\BC_+}\setminus\{0\})$
by \eqref{170321_3} and Lemma \ref{lemm:detL}, 
there exist constants $C_{\mu_*,\nu_*,\ka_*}>0$ and $\si_*\in(\si_*^w,\pi/2)$ such that
$|\Fl_j(\wt\xi',\wt\la)|\geq C_{\mu_*,\nu_*,\ka_*}$ for any $(\wt\xi',\wt\la)\in K_{\si_*}$.
Combining this inequality with \eqref{170321_1} yields that
\eqref{170315_1} holds for any $(\xi',\la)\in \BR^{N-1}\times\Si_{\si,0}$ with $\si\in(\si_*,\pi/2)$ and \eqref{161118_1}.
This completes the proof of Lemma \ref{lemm:symbols}. 
%
%
%
%
%
%
\end{proof}

By Lemma \ref{est:1}, we have for $j=1,2$
\begin{align*}
&|t_j|\geq \Re t_j \geq C_{\si,\mu_*,\nu_*,\ka_*}(|\la|^{1/2}+|\xi'|), \\
&|t_j+\om_\la|\geq \Re(t_j+\om_\la)\geq C_{\si,\mu_*,\nu_*,\ka_*}(|\la|^{1/2}+|\xi'|),
\end{align*}
for any $\la\in\Si_{\si,0}$ $(\si\in(\si_*^w,\pi/2))$ and $\xi'\in\BR^{N-1}$.
Together with these inequalities and \eqref{170426_1},
we can prove the following corollary
by Bell's formula \eqref{Bell} and Lemma \ref{lemm:symbols}
in the same manner as we have obtained Corollary \ref{coro:P} from Lemma \ref{lemm:P}.

\begin{coro}\label{coro:symbols}
Let $s\in\BR$, $\si_2\in(\si_*^w,\pi/2)$, and $\si_3\in(\si_*,\pi/2)$
for the same constant $\si_*$ as in Lemma $\ref{lemm:symbols}$. Then, for $j=1,2$,
\begin{equation*}
t_j^s, (t_j+\om_\la)^s\in\BBM_{\si_2,0}^1(s), \quad
\Fl_j(\xi',\la)^s\in \BBM_{\si_3,0}^{1}(6s),
\end{equation*}
and also
\begin{equation*}
\Fm_j(\xi',\la)\in \BBM_{\si_2,0}^1(3), \quad \Fp_j(\xi',\la),\Fq_j(\xi',\la)\in\BBM_{\si_2,0}^1(1).
\end{equation*}
\end{coro}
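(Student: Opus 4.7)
The plan is to mimic exactly the derivation of Corollary \ref{coro:P} from Lemma \ref{lemm:P}: first verify that each of the symbols $t_j$, $t_j+\om_\la$, $\Fl_j$, $\Fm_j$, $\Fp_j$, $\Fq_j$ (together with its $\la\tfrac{d}{d\la}$-derivative) satisfies elementary estimates that identify it as a type-$1$ multiplier of the stated order; then raise to the power $s$ via Bell's formula \eqref{Bell}, using the pointwise lower bounds from Lemmas \ref{est:1} and \ref{lemm:symbols} to control the factor with negative exponent.

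For the base estimates I would start from $t_j^2=|\xi'|^2+s_j\la$. Direct inspection (as in \eqref{170418_7}) gives $|\pa_{\xi'}^{\al'} t_j^2|\leq C(|\la|^{1/2}+|\xi'|)^{2-|\al'|}$, and $\la\tfrac{d}{d\la}t_j^2=s_j\la$ admits the same bound, so $t_j^2\in\BBM^1_{\si_2,0}(2)$. Combining this with the known membership $\om_\la^s\in\BBM^1_{\si_2,0}(s)$ (cited from \cite[Lemma 5.2]{SS12} at \eqref{170426_1}), the closure of $\BBM^1_{\si_2,0}$ under sums and the product rule of Lemma \ref{lemm:algebra}(1) immediately place $t_j+\om_\la$ in order $1$, and—applied to the explicit expansions displayed just above the corollary— place $\Fl_j$, $\Fm_j$, $\Fp_j$, $\Fq_j$ into orders $6$, $3$, $1$, $1$, respectively.

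To upgrade from the base symbol $g\in\{t_j^2,\,t_j+\om_\la,\,\Fl_j\}$ to $g^s$ I would apply Bell's formula with $f(t)=t^s$ (or $t^{s/2}$ in the case $g=t_j^2$) and sum type bounds analogous to \eqref{170418_9}:
\begin{equation*}
\bigl|\pa_{\xi'}^{\al'} g^s\bigr|
\leq C\sum_{k=1}^{|\al'|}|g|^{s-k}(|\la|^{1/2}+|\xi'|)^{(\text{ord}\,g)\cdot k-|\al'|}.
\end{equation*}
The lower bound $|t_j|\geq C(|\la|^{1/2}+|\xi'|)$ and $|t_j+\om_\la|\geq C(|\la|^{1/2}+|\xi'|)$ from Lemma \ref{est:1}, respectively $|\Fl_j|\geq C(|\la|^{1/2}+|\xi'|)^6$ from Lemma \ref{lemm:symbols}, let me absorb $|g|^{s-k}$ into $(|\la|^{1/2}+|\xi'|)^{(\text{ord}\,g)(s-k)}$, yielding the desired estimate $(|\la|^{1/2}+|\xi'|)^{(\text{ord}\,g)s-|\al'|}$. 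For $n=1$ the identity $\la\tfrac{d}{d\la}g^s=s\,g^{s-1}(\la\tfrac{d}{d\la}g)$ reduces the $\la$-derivative case to the $n=0$ case via Leibniz, since $\la\tfrac{d}{d\la}g$ is itself of the same order as $g$ by Step~1.

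The only point demanding care, and the principal obstacle, is the bookkeeping of sectors: the elementary lower bound for $t_j$ and $t_j+\om_\la$ holds on $\Si_{\si_2,0}$ with $\si_2>\si_*^w$, whereas the nonvanishing lower bound for $\Fl_j$ is available only on $\Si_{\si_3,0}$ with $\si_3>\si_*$ (the wider angle supplied by Lemma \ref{lemm:symbols}). Keeping the two sectors separate in the statement—exactly as in the corollary—ensures that Lemma \ref{lemm:symbols} is only invoked where it applies, and otherwise the argument is a line-by-line transcription of the proof of Corollary \ref{coro:P}.
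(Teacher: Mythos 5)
Your proposal is correct and follows essentially the same route as the paper, which simply records the lower bounds $|t_j|,\,|t_j+\om_\la|\geq C(|\la|^{1/2}+|\xi'|)$ from Lemma \ref{est:1}, invokes $\om_\la^s\in\BBM^1_{\si,0}(s)$ from \eqref{170426_1}, and then says the corollary follows ``by Bell's formula \eqref{Bell} and Lemma \ref{lemm:symbols} in the same manner as we have obtained Corollary \ref{coro:P} from Lemma \ref{lemm:P}.'' You have merely unpacked that one-sentence proof: base estimate on $t_j^2$ and the explicit expansions of $\Fl_j,\Fm_j,\Fp_j,\Fq_j$ to pin down the orders, Bell's formula with $f(t)=t^s$ (or $t^{s/2}$) together with the quoted lower bounds to absorb negative powers, the identity $\la\tfrac{d}{d\la}g^s=s\,g^{s-1}\la\tfrac{d}{d\la}g$ for $n=1$, and careful tracking of the two sectors $\Si_{\si_2,0}$ versus $\Si_{\si_3,0}$ — all of which matches the intended argument.
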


\subsection{Technical lemmas\label{subsec:5_3}}
This subsection introduces some technical lemmas that
play an important role to show the existence of $\CR$-bounded solution operator families.
Let $\CM_j(x_N)$ $(j=0,1,2)$ be given by
\begin{equation}\label{defi:M}
\CM_0(x_N) = \frac{e^{-t_2 x_N}-e^{-t_1 x_N}}{t_2 - t_1}, \quad
\CM_j(x_N) = \frac{e^{- t_j x_N}-e^{-\om_\la x_N}}{t_2-t_1} \quad (j=1,2).
\end{equation}
Then we have

\begin{lemm}\label{lemm:R-bound1}
Let $q\in(1,\infty)$, $\de\geq 0$, $\si_1\in(0,\pi/2)$, and $\si_2\in(\si_*^w,\pi/2)$.
For
\begin{equation*}
k(\xi',\la)\in\BBM_{\si_1,\de}^1(1), \quad  l(\xi',\la)\in\BBM_{\si_2,\de}^1(1), \quad
m(\xi',\la)\in\BBM_{\si_2,\de}^1(2),
\end{equation*}
we define operators $K(\la)$, $L(\la)$, and $M_j(\la)$ $(j=0,1,2)$ by
\begin{alignat*}{2}
[K(\la)f](x)
&=\int_0^\infty
\CF_{\xi'}^{-1}\left[k(\xi',\la)e^{-\om_\la(x_N+y_N)}\wh f(\xi',y_N)\right](x')\intd y_N 
&& \quad (\la\in\Si_{\si_1,\de}), \\
[L(\la)f](x)
&=\int_0^\infty
\CF_{\xi'}^{-1}\left[l(\xi',\la)e^{-t_1(x_N+y_N)}\wh f(\xi',y_N)\right](x')\intd y_N 
&& \quad (\la\in\Si_{\si_2,\de}), \\
[M_j(\la)f](x)
&=\int_0^\infty
\CF_{\xi'}^{-1}\left[m(\xi',\la)\CM_j(x_N+y_N)\wh f(\xi',y_N)\right](x')\intd y_N
&&\quad (\la\in\Si_{\si_2,\de}).
\end{alignat*}
Then the sets
$\{\la^n(d/d\la)^n K(\la) \mid \la\in\Si_{\si_1,\de}\}$, 
$\{\la^n(d/d\la)^n L(\la) \mid \la\in\Si_{\si_2,\de}\}$, 
and $\{\la^n(d/d\la)^n M_j(\la) \mid \la\in\Si_{\si_2,\de}\}$
are $\CR$-bounded on $\CL(L_q(\BR_+^N))$ for $n=0,1$ and $j=0,1,2$.
In addition, the $\CR$-bound of the first set is bounded above by some constant $C_{N,q,\de,\si_1,\mu_*,\nu_*,\ka_*}$,
while the $\CR$-bounds of the other sets are bounded above by some constant $C_{N,q,\de,\si_2,\mu_*,\nu_*,\ka_*}$.
\end{lemm}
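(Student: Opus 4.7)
I will reduce each of the three operator families to finite combinations of operators already covered by Lemma~\ref{lemm:R-bound0}, and then invoke the closure of $\CR$-boundedness under sums, products, and averaging together with the algebra of multiplier classes (Lemma~\ref{lemm:algebra}).

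For $K(\la)$, I would split the symbol of order~$1$ by means of $\om_\la^2=|\xi'|^2+\la/\mu_*$:
\begin{equation*}
k(\xi',\la)e^{-\om_\la(x_N+y_N)}
=\frac{k|\xi'|}{\om_\la^2}\,|\xi'|e^{-\om_\la(x_N+y_N)}
+\frac{k\la^{1/2}}{\mu_*\om_\la^2}\,\la^{1/2}e^{-\om_\la(x_N+y_N)}.
\end{equation*}
Since $\om_\la^{-2}\in\BBM_{\si_1,0}^1(-2)$ by \eqref{170426_1}, Lemma~\ref{lemm:algebra} yields $k|\xi'|/\om_\la^2\in\BBM_{\si_1,\de}^2(0)$ and $k\la^{1/2}/(\mu_*\om_\la^2)\in\BBM_{\si_1,\de}^1(0)$, so $K(\la)$ is the sum of an $L$-type and a $K$-type operator from Lemma~\ref{lemm:R-bound0}, and the required $\CR$-boundedness (including the $\la\,d/d\la$ derivative) follows. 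The same device with $t_1^2=|\xi'|^2+s_1\la$ in place of $\om_\la^2$ handles $L(\la)$: by Lemma~\ref{est:1}\eqref{est:1_2} one has $\Re t_1\ge C(|\la|^{1/2}+|\xi'|)$ for $\la\in\Si_{\si_2,0}$ (valid since $\si_2>\si_*^w$), so the proof of Lemma~\ref{lemm:R-bound0} carries over verbatim with $t_1$ replacing $\om_\la$; equivalently the substitution $z=\mu_*s_1\la$ identifies $t_1$ with $\om_z$ and maps $\Si_{\si_2,\de}$ into $\Si_{\si_2-\si_*^w,|\mu_*s_1|\de}$, reducing the claim directly to Lemma~\ref{lemm:R-bound0}.

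For $M_j(\la)$, the fundamental theorem of calculus, applied along the straight segment from $\om_\la$ (resp.\ $t_1$) to $t_j$ (resp.\ $t_2$), gives
\begin{equation*}
\CM_j(x_N)=-\frac{t_j-\om_\la}{t_2-t_1}\,x_N\int_0^1 e^{-\tau_\theta x_N}\,d\theta\ (j=1,2),\quad
\CM_0(x_N)=-x_N\int_0^1 e^{-\tilde\tau_\theta x_N}\,d\theta,
\end{equation*}
with $\tau_\theta=(1-\theta)\om_\la+\theta t_j$ and $\tilde\tau_\theta=(1-\theta)t_1+\theta t_2$. Writing $t_j-\om_\la=(s_j-\mu_*^{-1})\la/(t_j+\om_\la)$ and $t_2-t_1=(s_2-s_1)\la/(t_2+t_1)$ and invoking Corollary~\ref{coro:symbols}, the prefactor $(t_j-\om_\la)/(t_2-t_1)$ lies in $\BBM_{\si_2,0}^1(0)$, while $\tau_\theta,\tilde\tau_\theta\in\BBM_{\si_2,0}^1(1)$ with $\Re\tau_\theta,\Re\tilde\tau_\theta\ge c(|\la|^{1/2}+|\xi'|)$ uniformly in $\theta\in[0,1]$. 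Thus $M_j(\la)$ becomes the $\theta$-average of operators with kernel $\tilde m(\xi',\la)(x_N+y_N)e^{-\tau_\theta(x_N+y_N)}$, where $\tilde m\in\BBM_{\si_2,\de}^1(2)$. The extra factor $(x_N+y_N)$ supplies the missing order: rewriting $(x_N+y_N)e^{-\tau_\theta(x_N+y_N)}=\tau_\theta^{-1}\bigl(\tau_\theta(x_N+y_N)e^{-\tau_\theta(x_N+y_N)}\bigr)$ and combining with the decomposition $\tilde m=(\tilde m/\om_\la^2)\om_\la^2$ from the $K$-case and the uniform pointwise bound $|se^{-s}|\le C$ for $\Re s\ge0$, each resulting piece reduces to an $L$- or $K$-form of Lemma~\ref{lemm:R-bound0} (with $\tau_\theta$ in place of $\om_\la$, admissible by the argument of the $L(\la)$-case) with $\CR$-bounds uniform in $\theta$; averaging over $\theta\in[0,1]$ preserves $\CR$-boundedness.

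The main obstacle is the $M_j$ case: a naive integration by parts in $y_N$ against $(x_N+y_N)e^{-\tau_\theta(x_N+y_N)}$ would generate $\pa_{y_N}$-derivatives of $\wh f$ that are not controlled on $L_q$. The symbol-level trade of $(x_N+y_N)$ against $\tau_\theta^{-1}$ sketched above, which rests only on the elementary bound on $se^{-s}$, circumvents this difficulty and keeps the analysis inside the $L_q(\uhs)$ framework, paralleling the technique of \cite[Lemma~5.5]{SS12}.
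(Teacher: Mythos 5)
Your treatment of $K(\la)$ and $L(\la)$ is correct and is essentially the paper's argument: decompose the order-$1$ symbol via $\om_\la^2=|\xi'|^2+\la/\mu_*$ and invoke Lemma~\ref{lemm:R-bound0}, then handle $L(\la)$ by the substitution $t_1=\om_z$ with $z=\mu_*s_1\la$.

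The $M_j(\la)$ case, however, has a genuine gap. After the trade $(x_N+y_N)e^{-\tau_\theta(x_N+y_N)}=\tau_\theta^{-1}\bigl(\tau_\theta(x_N+y_N)e^{-\tau_\theta(x_N+y_N)}\bigr)$, the kernel still carries the polynomial factor $(x_N+y_N)$, which is not a shape Lemma~\ref{lemm:R-bound0} covers: that lemma treats only $\la^{1/2}e^{-\om_\la(x_N+y_N)}$ and $|\xi'|e^{-\om_\la(x_N+y_N)}$. The pointwise bound $|se^{-s}|\le C$ is not enough, because what the $\CR$-boundedness machinery needs are $\pa_{\xi'}$-derivative estimates of the full kernel (not just of the symbol), and a pointwise trade does not transform your operator into one of the two hypothesized forms. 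Moreover, invoking "$\tau_\theta$ in place of $\om_\la$, admissible by the argument of the $L(\la)$-case" is not justified: the $L(\la)$-reduction works precisely because $t_1=\sqrt{|\xi'|^2+s_1\la}$ is a rescaled $\om_\la$, whereas $\tau_\theta=(1-\theta)\om_\la+\theta t_j$ is a convex combination of two such square roots and is not of the form $\sqrt{|\xi'|^2+c\la}$ for any $c$, so the substitution argument does not apply.

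What the paper actually does for $M_0(\la)$ is to bypass Lemma~\ref{lemm:R-bound0} and reprove a kernel estimate from scratch, following \cite[Lemma 5.4]{SS12}: it splits $m(\xi',\la)=m\,\la/(\mu_*\om_\la^2)+m\,|\xi'|^2/\om_\la^2$, forms the kernels $k_{l\la}(x)=\CF_{\xi'}^{-1}\bigl[m_l(\xi',\la)(\la\text{ or }|\xi'|^2)\CM_0(x_N)\bigr](x')$, and proves $|\la^n(d/d\la)^n k_{l\la}(x)|\le C|x|^{-N}$ directly, using the $\theta$-integral representation of $\CM_0$ (i.e.\ \eqref{170429_5}) to trade one power of $x_N$ against one of $|\la|^{1/2}$ (or $|\xi'|$), integration by parts in $\xi'$ for the $|x'|^{-N}$ decay, and \cite[Theorem 2.3]{SS01} for the $|\xi'|^2$-piece; $\CR$-boundedness then follows from \cite[Proposition 3.3]{DHP03}. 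The $j=1,2$ cases are then handled by writing $\CM_j(x_N)=\Fr_j(\xi',\la)\,(e^{-t_jx_N}-e^{-\om_\la x_N})/(t_j-\om_\la)$ with $\Fr_j\in\BBM_{\si_2,0}^1(0)$ (Corollary~\ref{coro:symbols}). Your $\theta$-average is the correct first identity, but from there you need a direct kernel-estimate argument of this kind rather than a reduction to Lemma~\ref{lemm:R-bound0}.
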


\begin{proof}
{\bf Case $K(\la)$.}
The proof is similar to Corollary \ref{coro:R-bound0},
so that we may omit the detailed proof.

{\bf Case $L(\la)$.}
As was seen in the proof of Lemma \ref{est:1}, 
we know that $t_1=\om_z$ for $z=\mu_* s_1\la\in\Si_{\si_2-\si_2^w,0}$ with $\si_2-\si_*^w\in(0,\pi/2)$.
Thus the required properties follow from the above result of $K(\la)$ and Definition \ref{defi:R} immediately.

{\bf Case $M_0(\la)$.}
Following Shibata and Shimizu \cite[Lemma 5.4]{SS12},
we prove the $\CR$-boundedness 
by means of \cite[Theorem 2.3]{SS01} and \cite[Proposition 3.3]{DHP03}.

Let $\al'$ be any  multi-index of $\BN_0^{N-1}$ and $n=0,1$ in this proof.
We then note that for $r_1,r_2\in\{\om_\la,t_1,t_2\}$
and for non-negative real numbers $a_1$, $a_2$
\begin{align}
&\big|\pa_{\xi'}^{\al'}\{\la^n(d/d\la)^n(e^{-a_1 r_1}e^{-a_2 r_2})\}\big| \label{170429_2} \\
&\leq c(|\la|^{1/2}+|\xi'|)^{-|\al'|}e^{-b(|\la|^{1/2}+|\xi'|)(a_1+a_2)},
\notag
\end{align}
for any $(\xi',\la)\in(\BR^{N-1}\setminus\{0\})\times\Si_{\si_2,0}$ (cf. \cite[Lemma 5.2]{Saito15b}).
Here and subsequently,
$c$ is a positive constant depending on at most $\al'$, $\de$, $\si_2$, $\mu_*$, $\nu_*$, and $\ka_*$,
while $b$ is a positive constant depending on at most $\si_2$, $\mu_*$, $\nu_*$, and $\ka_*$.
They especially are independent of  $\xi'$, $\la$, $a_1$, and $a_2$.


By using \eqref{170427_9}, we rewrite $[M_0(\la)f](x)$ as 
\begin{align*}
[M_0(\la)f](x)
&=\int_0^\infty
\CF_{\xi'}^{-1}\left[\frac{m(\xi',\la)}{\mu_*\om_\la^2}
\la \CM_0(x_N+y_N)\wh f(\xi',y_N)\right](x') \\
&+\int_0^\infty
\CF_{\xi'}^{-1}\left[\frac{m(\xi',\la)}{\om_\la^2}
|\xi'|^2\CM_0(x_N+y_N)\wh f(\xi',y_N)\right](x') \\
&=:[M_{0,1}(\la)f](x)+[M_{0,2}(\la)f](x).
\end{align*}
Let $k_{1\la}(x)$, $k_{2\la}(x)$ be given by
\begin{alignat*}{2}
k_{1\la}(x)&= \CF_{\xi'}^{-1}\left[m_1(\xi',\la)\la\CM_0(x_N)\right](x') \quad
&&\text{for } m_1(\xi',\la)=\frac{m(\xi',\la)}{\mu_*\om_\la^2}, \\
k_{2\la}(x)&= \CF_{\xi'}^{-1}\left[m_2(\xi',\la)|\xi'|^2\CM_0(x_N)\right](x') \quad
&&\text{for } m_2(\xi',\la)=\frac{m(\xi',\la)}{\om_\la^2},
\end{alignat*}
and then
\begin{equation}\label{170324_1}
[M_{0,l}(\la)f](x) =\int_{\uhs}k_{l \la}(x'-y',x_N+y_N)f(y)\intd y \quad (l=1,2)
\end{equation}
and $m_l(\xi',\la)\in\BBM_{\si_2,\de}^1(0)$ $(l=1,2)$ by \eqref{170426_1} and the assumption for $m(\xi',\la)$.
Since $\CM_0(x_N)=-x_N\int_0^1e^{-\{\te t_2+(1-\te)t_1\}x_N}\intd\te$,
we have, by \eqref{170429_2} with $r_1=t_1$, $r_2=t_2$, $a_1=\te x_N$, and $a_2=(1-\te)x_N$,
\begin{equation}\label{170429_5}
|\pa_{\xi'}^{\al'}\{\la^n(d/d\la)^n\CM_0(x_N)\}|
\leq c x_N(|\la|^{1/2}+|\xi'|)^{-|\al'|}e^{-b(|\la|^{1/2}+|\xi'|)x_N}.
\end{equation}

First we estimate $k_{1\la}(x)$.
By \eqref{170429_5}, $m_1(\xi',\la)\in\BBM_{\si_2,\de}^1(0)$, and Leibniz's rule,
\begin{align}\label{160830_1}
&\big|\pa_{\xi'}^{\al'}\big\{\la^n(d/d\la)^n\big(m_1(\xi',\la)\la\CM_0(x_N)\big)\big\}\big|   \\
&\leq c x_N|\la|(|\la|^{1/2}+|\xi'|)^{-|\al'|}e^{-b(|\la|^{1/2}+|\xi'|)x_N} \notag \\ 
&\leq c |\la|^{1/2}(|\la|^{1/2}+|\xi'|)^{-|\al'|}e^{-(b/2)(|\la|^{1/2}+|\xi'|)x_N}. \notag 
\end{align}
On the other hand, using the identity:
\begin{equation*}
e^{ix'\cdot\xi'} 
= \sum_{|\al'|=l}\left(\frac{-ix'}{|x'|^2}\right)^{\al'}\pa_{\xi'}^{\al'}e^{ix'\cdot\xi'} \quad (l\in\BN_0),
\end{equation*}
we have, by integration by parts,
\begin{align*}
&\left(\la\frac{d}{d\la}\right)^n k_{1\la}(x) 
= \int_{\BR^{N-1}}e^{ix'\cdot\xi'}\left(\la\frac{d}{d\la}\right)^n\left( m_1(\xi',\la)\la\CM_0(x_N)\right) \intd \xi' \\
&=\left(\frac{1}{2\pi}\right)^{N-1}\sum_{|\al'|=N}\left(\frac{ix'}{|x'|^2}\right)^{\al'} \\
&\quad \cdot\int_{\BR^{N-1}}e^{ix'\cdot\xi'} \pa_{\xi'}^{\al'}\left\{\left(\la\frac{d}{d\la}\right)^n\left(m_1(\xi',\la)\la\CM_0(x_N)\right)\right\} \intd \xi'.
\end{align*}
This identity, together with \eqref{160830_1}, furnishes that
\begin{equation}\label{160904_10}
|\la^n(d/d\la)^n k_{1\la}(x)| \leq C|x'|^{-N}|\la|^{1/2}\int_{\BR^{N-1}}(|\la|^{1/2}+|\xi'|)^{-N}\intd\xi' \leq C|x'|^{-N}
\end{equation}
for a positive constant $C=C_{N,\de,\si_2,\mu_*,\ka_*,\nu_*}$.
In addition, by \eqref{160830_1} with $|\al'|=0$,
\begin{align*}
&|\la^n(d/d\la)^n k_{1\la}(x)|\leq C|\la|^{1/2}e^{-(b/2)|\la|^{1/2}x_N}\int_{\BR^{N-1}}e^{-(b/2)|\xi'|x_N}\intd\xi' \\
&\leq C (x_N)^{-1}\int_{\BR^{N-1}}e^{-(b/2)|\xi'|x_N}\intd\xi' \leq C (x_N)^{-N},
\end{align*}
which, combined with \eqref{160904_10}, implies that $|\la^n(d/d\la)^n k_{1\la}(x)| \leq C|x|^{-N}$.

Next we estimate $k_{2\la}(x)$.
Since $|\pa_{\xi'}^{\al'}|\xi'|^2|\leq c|\xi'|^{2-|\al'|}$ by \eqref{170418_7},
we have, by \eqref{170429_5} with $n=0$, $m_2(\xi',\la)\in\BBM_{\si_2,\de}^1(0)$, and Leibniz's rule,
\begin{align}\label{170430_5}
&\big|\pa_{\xi'}^{\al'}\big(m_2(\xi',\la)|\xi'|^2\CM_0(x_N)\big)\big| 
\leq c\sum_{\beta'+\ga'+\de'=\al'}\big|\pa_{\xi'}^{\beta'}|\xi'|^2\big|
\big|\pa_{\xi'}^{\ga'}m_2(\xi',\la)\big|\big|\pa_{\xi'}^{\de'}\CM_0(x_N)\big|  \\
&\leq cx_N\sum_{\beta'+\ga'+\de'=\al'}
|\xi'|^{2-|\beta'|}(|\la|^{1/2}+|\xi'|)^{-|\ga'|}(|\la|^{1/2}+|\xi'|)^{-|\de'|}e^{-b(|\la|^{1/2}+|\xi'|)x_N} \notag \\
&\leq c x_N|\xi'|^{2-|\al'|}e^{-b(|\la|^{1/2}+|\xi'|)x_N}
\leq c|\xi'|^{1-|\al'|}e^{-(b/2)(|\la|^{1/2}+|\xi'|)x_N},
\notag
\end{align}
which, combined with \cite[Theorem 2.3]{SS01}, furnishes that
$|k_{2\la}(x)| \leq C|x'|^{-N}$
for some positive constant $C=C_{N,\de,\si_2,\mu_*,\ka_*,\nu_*}$.
On the other hand, by \eqref{170430_5} with $|\al'|=0$,
\begin{equation*}
|k_{2\la}(x)|\leq C\int_{\BR^{N-1}}|\xi'|e^{-(b/2)|\xi'|x_N}\intd\xi'
\leq C(x_N)^{-N}.
\end{equation*}
Thus $|k_{2\la}(x)|\leq C|x|^{-N}$, and similarly $|\la (d/d\la)k_{2\la}(x)|\leq C|x|^{-N}$.

Summing up the above calculations, 
we have obtained
$|\la^n(d/d\la)^n k_{l\la}(x)| \leq C|x|^{-N}$ for $l=1,2$.
Thus, following the proof of Shibata and Shimizu \cite[Lemma 5.4]{SS12},
we can prove by \eqref{170324_1} and \cite[Poposition 3.3]{DHP03} that $\{\la^n(d/d\la)^nM_{0,l}(\la)\mid\la\in\Si_{\si_2,\de}\}$
are $\CR$-bounded on $\CL(L_q(\BR_+^N))$
and their $\CR$-bounds do not exceed a positive constant $C_{N,q,\de,\si_2,\mu_*,\ka_*,\nu_*}$.
This completes the case $M_0(\la)$.

{\bf Case $M_j(\la)$ $(j=1,2)$.}
By \eqref{170427_9}, we have for $j=1,2$
\begin{equation}\label{170429_11}
\CM_j(x_N)=\Fr_j(\xi',\la)
\left(\frac{e^{-t_j x_N}-e^{-\om_\la x_N}}{t_j-\om_\la}\right),
\quad \Fr_j(\xi',\la)= \frac{(s_j-\mu_*^{-1})(t_2+t_1)}{(s_2-s_1)(t_j+\om_\la)}.
\end{equation}
Since $\CM_j(x_N)=-\Fr_j(\xi',\la) x_N\int_0^1 e^{-\{\te t_j+(1-\te)\om_\la\}x_N}\intd\te$
and $\Fr_j(\xi',\la)\in\BBM_{\si_2,0}^1(0)$ by Corollary \ref{coro:symbols},
we can prove the required properties in the same manner as the case $M_0(\la)$
by virtue of \eqref{170429_2}.
This completes the proof of Lemma \ref{lemm:R-bound1}.
\end{proof}

\begin{coro}\label{coro:density}
Let $q\in(1,\infty)$, $\de>0$, and $\si\in(\si_*^w,\pi/2)$. 
\begin{enumerate}[$(1)$]
\item\label{coro:density1}
Let $j=1,2$. For 
$l^j(\xi',\la)\in\BBM_{\si,0}^1(j-3)$ and
$m^j(\xi',\la)\in\BBM_{\si,0}^1(j-2)$,
we define operators $L^j(\la)$, $M_0^j(\la)$ by
\begin{align*}
[L^j(\la)f](x)
&=\CF_{\xi'}^{-1}\left[l^j(\xi',\la)e^{-t_1 x_N}\wh f(\xi',0)\right](x'), \\
[M_0^j(\la)f](x)
&=\CF_{\xi'}^{-1}\left[m^j(\xi',\la)\CM_0(x_N)\wh f(\xi',0)\right](x'),
\end{align*}
for $\la\in\Si_{\si,\de}$ and $f\in W_q^j(\BR_+^N)$.
Then there exist operators $\wt L^j(\la)$, $\wt M_0^j(\la)$, with
\begin{equation*}
\wt L^j(\la), \wt M_0^j(\la)\in
\Hol(\Si_{\si,\de},\CL(L_q(\BR_+)^{N_j},W_q^3(\BR_+^N))),
\end{equation*}
such that for any $g\in W_q^1(\BR_+^N)$ and for any $h\in W_q^2(\BR_+^N)$
\begin{alignat*}{2}
L^1(\la)g&=\wt L^1(\la)(\nabla g,\la^{1/2}g),  \quad
&L^2(\la)h &=\wt L^2(\la)(\nabla^2 h,\la^{1/2}\nabla h,\la h), \\
M_0^1(\la)g &=\wt M_0^1(\la)(\nabla g,\la^{1/2}g), \quad
&M_0^2(\la)h & =\wt M_0^2(\la)(\nabla^2 h,\la^{1/2}\nabla h,\la h),
\end{alignat*}
and also for $n=0,1$
\begin{align*}
\CR_{\CL(L_q(\BR_+^N)^{N_j},\FA_q(\BR_+^N))}
\big(\big\{\la^n(d/d\la)^n\big(\CS_\la\wt L^j(\la)\big) \mid \la\in\Si_{\si,\de}\big\}\big)
&\leq C, \\
\CR_{\CL(L_q(\BR_+^N)^{N_j},\FA_q(\BR_+^N))}
\big(\big\{\la^n(d/d\la)^n\big(\CS_\la\wt M_0^j(\la)\big) \mid \la\in\Si_{\si,\de}\big\}\big)
&\leq C,
\end{align*} 
with a positive constant $C=C_{N,q,\de,\si,\mu_*,\nu_*,\ka_*}$.
\item\label{coro:density2}
Let $j,k=1,2$.
For 
$m^j(\xi',\la)\in\BBM_{\si,0}^1(j-1)$,
we define operators $M_k^j(\la)$ by
\begin{equation*}
[M_k^j(\la)f](x)
=\CF_{\xi'}^{-1}\left[m^j(\xi',\la)\CM_k(x_N)\wh f(\xi',0)\right](x'),
\end{equation*}
for $\la\in\Si_{\si,0}$ and $f\in W_q^j(\BR_+^N)$.
Then there exist operators $\wt M_k^j(\la)$, with
\begin{equation*}
\wt M_k^j(\la)\in\Hol(\Si_{\si,0},\CL(L_q(\BR_+^N)^{N_j},W_q^2(\BR_+^N))),
\end{equation*}
such that for any $g\in W_q^1(\BR)$ and for any $h\in W_q^2(\BR_+^N)$
\begin{equation*}
M_k^1(\la)g =\wt M_k^1(\la)(\nabla g,\la^{1/2}g), \quad
M_k^2(\la)h =\wt M_k^2(\la)(\nabla^2 h,\la^{1/2}\nabla h,\la h),
\end{equation*}
and also for $n=0,1$
\begin{equation*}
\CR_{\CL(L_q(\BR_+^N)^{N_j},\FB_q(\BR_+^N))}
\big(\big\{\la^n(d/d\la)^n\big(\CT_\la\wt M_k^j(\la)\big) \mid \la\in\Si_{\si,0}\big\}\big)
\leq C,
\end{equation*} 
with some positive constant $C=C_{N,q,\si,\mu_*,\nu_*,\ka_*}$.
\end{enumerate}
\end{coro}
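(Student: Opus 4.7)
\noindent\emph{Plan.} The proof adapts the template of Corollary \ref{coro:R-bound0} to the new boundary kernels $e^{-t_1 x_N}$ (for $L^j$) and $\CM_k(x_N)$ (for $M_0^j,M_k^j$). The overall strategy has three steps: (i) use the integration-by-parts identity \eqref{vole} to convert the boundary trace $\wh g(\xi',0)$ or $\wh h(\xi',0)$ into a volume integral over $\{y_N>0\}$; (ii) use the quadratic identities $t_k^2=|\xi'|^2+s_k\la$ and $\om_\la^2=|\xi'|^2+\la/\mu_*$ (cf.\ \eqref{170427_9}) to convert each residual factor of $t_k,\om_\la,|\xi'|^2,\la$ into tangential derivatives, producing an expression whose inputs are precisely the tuples $(\nabla g,\la^{1/2}g)$ and $(\nabla^2 h,\la^{1/2}\nabla h,\la h)$; (iii) read off the $\CR$-bound from Lemma \ref{lemm:R-bound1} combined with the symbol calculus of Lemma \ref{lemm:algebra} and Corollary \ref{coro:symbols}.

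\noindent\emph{Construction of $\wt L^j,\wt M_0^j,\wt M_k^j$.} For the $L^1$-case, inserting $a(x_N)=e^{-t_1 x_N}$ and $b(y_N)=\wh g(\xi',y_N)$ into \eqref{vole} yields
\begin{equation*}
e^{-t_1 x_N}\wh g(\xi',0)=\int_0^\infty t_1\,e^{-t_1(x_N+y_N)}\wh g(y_N)\intd y_N-\int_0^\infty e^{-t_1(x_N+y_N)}\wh{\pa_N g}(y_N)\intd y_N,
\end{equation*}
and the residual $t_1\wh g$ is resolved by $t_1\wh g=-\sum_l(i\xi_l/t_1)\wh{\pa_l g}+(s_1\la^{1/2}/t_1)\wh{\la^{1/2}g}$, which defines $\wt L^1(\la)$. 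The $L^2$-case is handled by iterating this reduction: the residual $\wh h$ after the first application of \eqref{vole} is replaced by $-t_1^{-2}\sum_l\wh{\pa_l^2 h}+s_1 t_1^{-2}\wh{\la h}$, and the remaining $\wh{\pa_N h}$-piece is rewritten by one more integration by parts so as to produce $\wh{\pa_N^2 h}$ (a component of $\nabla^2 h$) against the multiplier $1/t_1\in\BBM_{\si,0}^1(-1)$. The $M_0^j$- and $M_k^j$-cases are treated in complete analogy, with $\CM_0(x_N)$ or $\CM_k(x_N)$ in place of $e^{-t_1 x_N}$; the auxiliary differentiation rule $\pa_N\CM_k=-(t_k e^{-t_k x_N}-\om_\la e^{-\om_\la x_N})/(t_2-t_1)$ decomposes the arising kernels into pure $K$- and $L$-type kernels of Lemma \ref{lemm:R-bound1}.

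\noindent\emph{$\CR$-bound and main obstacle.} Applying $\CS_\la$ (resp.\ $\CT_\la$) to the representations of $\wt L^j,\wt M_0^j$ (resp.\ $\wt M_k^j$) multiplies every symbol by up to three (resp.\ two) further factors from $\{i\xi_l,-t_1,\la^{1/2},\la\}$. Lemma \ref{lemm:algebra}, Corollary \ref{coro:symbols} and \eqref{170426_1} then show that each resulting symbol lies in the class $\BBM_{\si,0}^1(s)$ with $s$ equal to the order required by the corresponding case ($K, L$, or $M_j$) of Lemma \ref{lemm:R-bound1}, which delivers the claimed $\CR$-bound for $n=0$; the case $n=1$ follows identically because every preceding estimate is stated uniformly in $n\in\{0,1\}$. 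The main obstacle, and the reason why $\de>0$ is imposed in Part~(1) but not in Part~(2), is the component $\la\,\wt L^1$ (and likewise $\la\,\wt M_0^1$) appearing in $\CS_\la\wt L^1$: its raw symbol has order $0$, one unit below what the $L(\la)$-case of Lemma \ref{lemm:R-bound1} requires. The hypothesis $\de>0$ saves this step through the trivial embedding $\BBM_{\si,0}^1(0)\subset\BBM_{\si,\de}^1(1)$ on $\Si_{\si,\de}$, with an embedding constant of order $\de^{-1/2}$, which removes the one-unit deficit and allows Lemma \ref{lemm:R-bound1} to apply verbatim. In Part~(2) the exponent $j-1$ on $m^j$ is already one unit higher, so no such rescaling is needed.
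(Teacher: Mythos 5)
Your overall strategy matches the paper's: use the integration-by-parts identity \eqref{vole} to convert the boundary trace into a volume integral, resolve residual symbols into tangential/normal derivatives and $\la^{1/2}$-factors via the quadratic relations \eqref{170427_9}, and close with Lemma \ref{lemm:R-bound1} together with the symbol calculus of Lemma \ref{lemm:algebra} and Corollary \ref{coro:symbols}. Your diagnosis of the role of $\de>0$ — that the $\la$-component of $\CS_\la\wt L^1$ (resp.\ $\CS_\la\wt M_0^1$) falls one order short of what the $L(\la)$- (resp.\ $M_j(\la)$-) case of Lemma \ref{lemm:R-bound1} requires, and that $|\la|\geq\de$ supplies the missing unit via the embedding $\BBM_{\si,0}^1(s)|_{\Si_{\si,\de}}\subset\BBM_{\si,\de}^1(s+1)$ at a $\de^{-1/2}$-cost — is exactly the mechanism the paper uses, written there for $\wt M_{0,5}^1$ as $\la m^1\in\BBM_{\si,0}^1(1)\subset\BBM_{\si,\de}^1(2)$.

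There is, however, a genuine gap in your treatment of the $\CM_k$-kernels. You state the differentiation rule in raw form, $\pa_N\CM_k=-(t_k e^{-t_k x_N}-\om_\la e^{-\om_\la x_N})/(t_2-t_1)$, and claim this "decomposes the arising kernels into pure $K$- and $L$-type kernels." If you actually split along that formula, the two exponential kernels carry the multipliers $t_k/(t_2-t_1)$ and $\om_\la/(t_2-t_1)$, each containing $1/(t_2-t_1)=(t_2+t_1)/\{(s_2-s_1)\la\}$, which blows up as $\la\to0$ at fixed $\xi'\neq0$ and therefore lies in no class $\BBM_{\si,\de}^l(s)$; the Fourier-multiplier machinery of Lemma \ref{lemm:R-bound1} cannot be applied to those pieces. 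This singularity is precisely what the $M_j(\la)$-case of Lemma \ref{lemm:R-bound1} is built to absorb: $\CM_j$ must be kept as a single kernel, because the extra factor of $x_N$ in $\CM_j=-\Fr_j x_N\int_0^1 e^{-\{\te t_j+(1-\te)\om_\la\}x_N}\intd\te$ compensates for the missing power. The paper's differentiation identities \eqref{M:deriv_1} and \eqref{M:deriv_2} — $\pa_N\CM_0=-t_2\CM_0-e^{-t_1 x_N}$ and $\pa_N\CM_j=-t_j\CM_j-\Fr_j e^{-\om_\la x_N}$ with $\Fr_j\in\BBM_{\si,0}^1(0)$ — are exactly the regroupings that avoid splitting off the bad factor, yielding a $\CM_j$-kernel (handled by the $M_j(\la)$-case, not $K$ or $L$) plus an exponential kernel with an order-zero multiplier. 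You do parenthetically list "$M_j$" among the cases to invoke downstream, so the idea is latent in your proposal, but the decomposition as you have written it does not go through without the $\Fr_j$-regrouping.
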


\begin{proof}
By \eqref{170427_9} and $\Fr_j(\xi',\la)$ defined in \eqref{170429_11}, we have for $j=1,2$
\begin{align}
\pa_N \CM_0(x_N)
&=-t_2\CM_0(x_N)-e^{-t_1 x_N}, \label{M:deriv_1} \\
\pa_N \CM_j(x_N)
&=-t_j\CM_j(x_N)-\Fr_j(\xi',\la)e^{-\om_\la x_N}. \label{M:deriv_2}
\end{align}

\eqref{coro:density1}.
We only prove the case $M_0^1(\la)$. 
By \eqref{170427_9}, \eqref{vole}, and \eqref{M:deriv_1}, 
\begin{align*}
M_0^1(\la)g
&=\int_0^\infty\CF_{\xi'}^{-1}
\left[\frac{\la^{1/2}t_2m^1(\xi',\la)}{\mu_*\om_\la^2}\CM_0(x_N+y_N)
\wh{\la^{1/2}g}(\xi',y_N)\right](x')\intd y_N \\
&-\sum_{k=1}^{N-1}\int_0^\infty
\CF_{\xi'}^{-1}\left[\frac{i\xi_k t_2 m^1(\xi',\la)}{\om_\la^2}\CM_0(x_N+y_N)\wh{\pa_k g}(\xi',y_N)\right](x')\intd y_N \\
&+\int_0^\infty \CF_{\xi'}^{-1}
\left[\frac{\la^{1/2}m^1(\xi',\la)}{\mu_*\om_\la^2}e^{-t_1(x_N+y_N)}\wh{\la^{1/2}g}(\xi',y_N)\right](x')\intd y_N \\
&-\sum_{k=1}^{N-1}\int_0^\infty
\CF_{\xi'}^{-1}
\left[\frac{i\xi_k m^1(\xi',\la)}{\om_\la^2}e^{-t_1(x_N+y_N)}\wh{\pa_ k g}(\xi',y_N)\right](x')\intd y_N \\
&-\int_0^\infty\CF_{\xi'}^{-1}\left[m^1(\xi',\la)\CM_0(x_N+y_N)\wh{\pa_N g}(\xi',y_N)\right](x')\intd y_N \\
&=:\sum_{l=1}^5\wt M_{0,l}^1(\la)(\nabla g,\la^{1/2}g).
\end{align*}

First we estimate $\wt M_{0,1}^1(\la)$, $\wt M_{0,2}^1(\la)$, and $\wt M_{0,5}^1(\la)$.
Here we consider $\wt M_{0,5}^1(\la)$ only, because the other terms can be treated similarly.
Let $j,k,l=1,\dots,N-1$. By \eqref{M:deriv_1},  $\wt M_{0,5}^1(\la)$ can be written as
\begin{align*}
&\la\wt M_{0,5}^1(\la)(\nabla g,\la^{1/2}g) \\
&\quad = 
-\int_0^\infty\CF_{\xi'}^{-1}\Big[\la m^1(\xi',\la) \CM_0(x_N+y_N)\wh{\pa_N g}(\xi',y_N)\Big](x')\intd y_N,
\\
&\big(\la\pa_j,\la^{1/2}\pa_j\pa_k,\pa_j\pa_k\pa_l\big) 
\wt M_{0,5}^1(\la)(\nabla g,\la^{1/2}g) \\
&\quad = 
-\int_0^\infty\CF_{\xi'}^{-1}\Big[\big(\la i\xi_j,-\la^{1/2}\xi_j\xi_k,-i\xi_j\xi_k\xi_l\big) m^1(\xi',\la) \\
&\quad \cdot\CM_0(x_N+y_N)\wh{\pa_N g}(\xi',y_N)\Big](x')\intd y_N,
\\
&\big(\la,\la^{1/2}\pa_j,\pa_j\pa_k\big)\pa_N\wt M_{0,5}^1(\la)(\nabla g,\la^{1/2}g) \\
&\quad =
\int_0^\infty\CF_{\xi'}^{-1}\Big[\big(\la,\la^{1/2}i\xi_j,-\xi_j\xi_k\big) t_2 m^1(\xi',\la)
\CM_0(x_N+y_N)\wh{\pa_N g}(\xi',y_N)\Big](x')\intd y_N
\\
&\quad+\int_0^\infty\CF_{\xi'}^{-1}\Big[\big(\la,\la^{1/2}i\xi_j,-\xi_j\xi_k\big)m^1(\xi',\la) 
e^{-t_1(x_N+y_N)}\wh{\pa_N g}(\xi',y_N)\Big](x')\intd y_N,
\\
&\big(\la^{1/2},\pa_j\big)\pa_N^2\wt M_{0,5}^1(\la)(\nabla g,\la^{1/2}g) \\
&\quad =
-\int_0^\infty\CF_{\xi'}^{-1}\Big[\big(\la^{1/2},i\xi_j\big)t_2^2 m^1(\xi',\la)
\CM_0(x_N+y_N)\wh{\pa_N g}(\xi',y_N)\Big](x')\intd y_N \\
&\quad -\int_0^\infty\CF_{\xi'}^{-1}
\Big[\big(\la^{1/2},i\xi_j\big) (t_2+t_1) m^1(\xi',\la)
e^{-t_1(x_N+y_N)}\wh{\pa_N g}(\xi',y_N)\Big](x')\intd y_N,
\\
&\pa_N^3\wt M_{0,5}^1(\la)(\nabla g,\la^{1/2}g) \\
&\quad =
\int_0^\infty\CF_{\xi'}^{-1}\Big[t_2^3m^1(\xi',\la)\CM_0(x_N+y_N)\wh{\pa_N g}(\xi',y_N)\Big](x')\intd y_N \\
&\quad+\int_0^\infty\CF_{\xi'}^{-1}\Big[\{t_2^2+t_1(t_2+t_1)\}m^1(\xi',\la)e^{-t_1(x_N+y_N)}\wh{\pa_N g}(\xi',y_N)\Big](x')\intd y_N.
\end{align*}
Then, by Lemma \ref{lemm:algebra}, Corollary \ref{coro:symbols}, and the assumption for $m^1(\xi',\la)$,
\begin{align*}
&\la m^1(\xi',\la)\in \BBM_{\si,0}^1(1)\subset \BBM_{\si,\de}^1(2), 
\quad\big(\la i\xi_j,-\la^{1/2}\xi_j\xi_k,-i\xi_j\xi_k\xi_l\big) m^1(\xi',\la)\in \BBM_{\si,0}^1(2), \\
&\big(\la,\la^{1/2}i\xi_j,-\xi_j\xi_k\big) t_2 m^1(\xi',\la)\in \BBM_{\si,0}^1(2), 
\quad \big(\la,\la^{1/2}i\xi_j-\xi_j\xi_k\big)m^1(\xi',\la)\in \BBM_{\si,0}^1(1), \\
&\big(\la^{1/2},i\xi_j\big)t_2^2 m^1(\xi',\la)\in \BBM_{\si,0}^1(2), \quad
\big(\la^{1/2},i\xi_j\big) (t_2+t_1) m^1(\xi',\la)\in \BBM_{\si,0}^1(1), \\
&t_2^3m^1(\xi',\la)\in \BBM_{\si,0}^1(2), \quad
\{t_2^2+t_1(t_2+t_1)\}m^1(\xi',\la)\in \BBM_{\si,0}^1(1).
\end{align*}
Combining these properties with Lemma \ref{lemm:R-bound1} furnishes  for $n=0,1$ 
\begin{equation}\label{170501_1}
\CR_{\CL(L_q(\BR_+^N)^{N_1},\FA_q(\BR_+^N))}
\big(\big\{\la^n(d/d\la)^n\big(\CS_\la\wt M_{0,5}^1(\la)\big) \mid \la\in\Si_{\si,\de}\big\}\big) \leq C,
\end{equation}
with some positive constant $C=C_{N,q,\de,\si,\mu_*,\nu_*,\ka_*}$. Analogously, for $l=1,2$,
\begin{equation}\label{170501_2}
\CR_{\CL(L_q(\BR_+^N)^{N_1},\FA_q(\BR_+^N))}
\big(\big\{\la^n(d/d\la)^n\big(\CS_\la\wt M_{0,l}^1(\la)\big) \mid \la\in\Si_{\si,\de}\big\}\big) \leq C.
\end{equation}

Next we estimate $\wt M_{0,3}^1(\la)$, $\wt M_{0,4}^1(\la)$.
By Lemma \ref{lemm:algebra}, Corollary \ref{coro:symbols}, and the assumption for $m^1(\xi',\la)$, we have for $k=1,\dots,N-1$
\begin{equation*}
\frac{\la^{1/2}m^1(\xi',\la)}{\mu_*\om_\la^2}, \frac{i\xi_k m^1(\xi',\la)}{\om_\la^2}
\in\BBM_{\si,0}^1(-2).
\end{equation*}
Thus, similarly to the case $\wt M_{0,5}^1(\la)$,
we obtain by Lemma \ref{lemm:R-bound1}
\begin{equation*}
\CR_{\CL(L_q(\BR_+^N)^{N_1},\FA_q(\BR_+^N))}
\big(\big\{\la^n(d/d\la)^n(\CS_\la\wt M_{0,l}^1(\la)) \mid \la\in\Si_{\si,\de}\big\}\big)
\leq C
\end{equation*}
for $n=0,1$ and $l=3,4$ with $C=C_{N,q,\de,\si,\mu_*,\nu_*,\ka_*}$,
which, combined with \eqref{170501_1} and \eqref{170501_2}, completes the proof of the case $M_0^1(\la)$.
This completes the proof of Corollary \ref{coro:density} \eqref{coro:density1}.

\eqref{coro:density2}.
The proof is similar to \eqref{coro:density1} by virtue of \eqref{M:deriv_2} and $\Fr_j(\xi',\la)\in\BBM_{\si,0}^1(0)$,
so that we may omit the detailed proof.
This completes the proof of the corollary.
\end{proof}

\section{Proof of Theorem \ref{theo1}}\label{sec:proof1}
As was seen in Section \ref{sec:half},
it suffices to show Theorem \ref{theo:half} in order to
complete the proof of Theorem \ref{theo1}.
From this viewpoint, we prove Theorem \ref{theo:half} in this section.
Let $(\rho,\Bu)$ be the solution of \eqref{eq:half1}
obtained in Section \ref{sec:half},
and let $u_J$ be the $J$th component of $\Bu$ for $J=1,\dots,N$.

\subsection{Construction of solution operator families}
This subsection constructs solution operator families associated with $(\rho,\Bu)$.

First, we consider the formula of $\rho$.
It can be written as 
\begin{align*}
\rho 
&= 
\CF_{\xi'}^{-1}
\left[\left\{\left(\frac{t_1^2-|\xi'|^2}{\la t_1}\right)\beta_N 
+\left(\frac{t_2^2-|\xi'|^2}{\la t_2}\right)\ga_N\right\}e^{-t_1 x_N}\right](x') \\
&+ \CF_{\xi'}^{-1}\left[\left(\frac{t_2^2-|\xi'|^2}{\la t_2}\right)\ga_N\left(e^{-t_2 x_N}-e^{-t_1 x_N}\right)\right](x').
\end{align*}
Inserting \eqref{bega} into the above formula 
yields that $\rho = \SSS^1(\la)\Bg + \SSS^2(\la)h$ with
\begin{align*}
&\SSS^1(\la)\Bg := 
-\sum_{l=1}^2\sum_{k=1}^{N-1}
\CF_{\xi'}^{-1}\left[\frac{2i\xi_k \om_\la t_1 t_2   L_{l 1} (t_{l}^2-|\xi'|^2)}{\mu_*\la t_l \det\BL}e^{-t_1 x_N}\wh g_k(0)\right](x') \\
&\quad+\sum_{l=1}^2
\CF_{\xi'}^{-1}\left[\frac{t_1t_2 L_{l 1}(t_l^2-|\xi'|^2)(\om_\la^2+|\xi'|^2)}{\mu_* \la t_l \det\BL}e^{-t_1 x_N}\wh g_N(0)\right](x') \\
&\quad -\sum_{k=1}^{N-1}
\CF_{\xi'}^{-1}\left[\frac{2i\xi_k \om_\la t_1 L_{21} (t_2^2-|\xi'|^2)}{\mu_*\la \det\BL}\left(e^{-t_2 x_N}-e^{-t_1 x_N}\right)\wh g_k(0)\right](x') \\
&\quad+ 
\CF_{\xi'}^{-1}\left[\frac{t_1 L_{21}(t_2^2-|\xi'|^2)(\om_\la^2+|\xi'|^2)}{\mu_*\la\det\BL}\left(e^{-t_2 x_N}-e^{-t_1 x_N}\right)\wh g_N(0)\right](x'), \\
&\SSS^2(\la)h :=\sum_{l=1}^2\CF_{\xi'}^{-1}
\left[\frac{L_{l2}(t_l^2-|\xi'|^2)}{t_l \det\BL} e^{-t_1 x_N}\wh h(0)\right](x') \\
&\quad + \CF_{\xi'}^{-1}\left[\frac{L_{22}(t_2^2-|\xi'|^2)}{t_2\det\BL}\left(e^{-t_2 x_N}-e^{-t_1 x_N}\right)\wh h(0)\right](x').
\end{align*}

Secondly, we consider the formula of $u_J$.
Let $j=1,\dots,N-1$, and then inserting \eqref{bega} into $u_J$ yields that 
$u_J=\SST_J^1(\la)\Bg+\SST_J^2(\la)h$ with
\begin{align*}
& \SST_j^1(\la)\Bg :=
\CF_{\xi'}^{-1}\left[\frac{1}{\mu_*\om_\la} e^{-\om_\la x_N}\wh g_j(0)\right](x') 
+\CF_{\xi'}^{-1}\left[\frac{i\xi_j}{2\mu_*\om_\la^2}e^{-\om_\la x_N} \wh g_N(0)\right](x')\\
&\quad +\sum_{l=1}^2\sum_{k=1}^{N-1}
\CF_{\xi'}^{-1}\left[\frac{\xi_j\xi_k t_1 t_2 L_{l 1}(4 t_l \om_\la-3\om_\la^2-|\xi'|^2)}{\mu_* t_l\om_\la \det\BL}e^{-\om_\la x_N}\wh{g}_k(0)\right](x') \\
&\quad +\sum_{l=1}^2\CF_{\xi'}^{-1}
\left[\frac{i\xi_j t_1t_2 L_{l 1}(\om_\la^2+|\xi'|^2)(4 t_l \om_\la-3\om_\la^2-|\xi'|^2)}{2\mu_*t_l \om_\la^2\det\BL}e^{-\om_\la x_N}\wh g_N(0)\right](x') \\
&\quad -\sum_{l=1}^2\sum_{k=1}^{N-1}\CF_{\xi'}^{-1}
\left[\frac{2\xi_j\xi_k \om_\la  t_1t_2 L_{l 1}}{\mu_*t_l \det\BL}\left(e^{-t_l x_N}-e^{-\om_\la x_N}\right)\wh g_k(0)\right](x') \\
&\quad -\sum_{l=1}^2\CF_{\xi'}^{-1}
\left[\frac{i\xi_j t_1t_2L_{l 1} (\om_\la^2+|\xi'|^2)}{\mu_* t_l\det\BL}\left(e^{-t_l x_N}-e^{-\om_\la x_N}\right)\wh g_N(0)\right](x'), \\
&\SST_j^2(\la)h :=
\sum_{l=1}^2\CF_{\xi'}^{-1}
\left[\frac{\la i\xi_j L_{l 2}(4 t_l\om_\la-3\om_\la^2-|\xi'|^2)}{2 t_l \om_\la^2\det\BL}e^{-\om_\la x_N}\wh h(0)\right](x') \\
&\quad -\sum_{l=1}^2 \CF_{\xi'}^{-1}
\left[\frac{\la i\xi_j L_{l2}}{t_l\det\BL}\left(e^{-t_l x_N}-e^{-\om_\la x_N}\right)\wh h(0)\right](x'), \\
&\SST_N^1(\la)\Bg
:= \CF_{\xi'}^{-1}\left[\frac{1}{2\mu_*\om_\la}e^{-\om_\la x_N}\wh g_N(0)\right](x') \\
&\quad -\sum_{l=1}^2\sum_{k=1}^{N-1}
\CF_{\xi'}^{-1}\left[\frac{i\xi_k t_1t_2 L_{l1}(2t_l\om_\la-\om_\la^2-|\xi'|^2)}{\mu_* t_l  \det\BL}
e^{-\om_\la x_N}\wh g_k(0)\right](x') \\
&\quad +\sum_{l=1}^2 \CF_{\xi'}^{-1}
\left[\frac{t_1t_2 L_{l1}(\om_\la^2+|\xi'|^2)(2t_l\om_\la-\om_\la^2-|\xi'|^2)}{2\mu_* t_l \om_\la \det\BL}e^{-\om_\la x_N}\wh g_N(0)\right](x') \\
&\quad -\sum_{l=1}^2\sum_{k=1}^{N-1}
\CF_{\xi'}^{-1}\left[\frac{2 i\xi_k t_1t_2\om_\la L_{l1}}{\mu_* \det\BL}\left(e^{-t_l x_N}-e^{-\om_\la x_N}\right)\wh g_k(0)\right](x') \\
&\quad + \sum_{l=1}^2\CF_{\xi'}^{-1}
\left[\frac{t_1t_2 L_{l1}(\om_\la^2+|\xi'|^2)}{\mu_* \det\BL}\left(e^{-t_l x_N}-e^{-\om_\la x_N}\right) \wh g_N(0)\right](x'), \\
&\SST_N^2(\la) h
:= 
\sum_{l=1}^2 \CF_{\xi'}^{-1}\left[\frac{\la L_{l2}(2t_l\om_\la-\om_\la^2-|\xi'|^2)}{ 2t_l\om_\la\det\BL}e^{-\om_\la x_N}\wh h(0)\right](x') \\
&\quad +\sum_{l=1}^2\CF_{\xi'}^{-1}\left[\frac{\la L_{l2}}{\det\BL}\left(e^{-t_l x_N}-e^{-\om_\la x_N}\right)\wh h(0)\right](x').
\end{align*}

Thirdly, we rewrite $\SSS^1(\la)\Bg$, $\SSS^2(\la)$, $\SST_J^1(\la)\Bg$, and $\SST_J^2(\la)h$
by means of \eqref{170427_9}, \eqref{defi:M}, and 
symbols introduced in Subsection \ref{subsec:5_2}
in order to eliminate $\la$ and $t_2-t_1$ contained in $\det\BL$. 
For simplicity, we set
\begin{equation*}
\Fa(\xi',\la) =
\frac{s_1s_2(t_2+t_1)}{s_2-s_1}, \quad
\Fb(\xi',\la) =
\frac{t_2+t_1}{s_2-s_1}.
\end{equation*}

It now holds that
\begin{align*}
\sum_{l=1}^2\frac{L_{l1}(t_l^2-|\xi'|^2)}{\la t_l \det\BL}
&= \frac{s_1s_2(t_1+\om_\la)}{t_2 \Fl_1(\xi',\la)}, \quad
\frac{L_{21}(t_2^2-|\xi'|^2)}{\la \det\BL}
= -\frac{s_1 s_2 t_1(t_1+\om_\la)}{(t_2-t_1)\Fl_1(\xi',\la)}, \\
\sum_{l=1}^2 \frac{L_{l2}(t_l^2-|\xi'|^2)}{t_l\det\BL} 
&=\Fa(\xi',\la)\sum_{l=1}^2\frac{(-1)^{l+1} t_l\Fm_l(\xi',\la)}{s_l\Fl_l(\xi',\la)}, \\
\frac{L_{22}(t_2^2-|\xi'|^2)}{t_2 \det\BL}
&=  \frac{\la s_2 t_1 \Fm_1(\xi',\la)}{(t_2-t_1)\Fl_1(\xi',\la)},
\end{align*}
%
%
which, inserted into $\SSS^1(\la)\Bg$ and $\SSS^2(\la)h$, furnishes that
\begin{align*}
&\SSS^1(\la)\Bg = 
 -\sum_{k=1}^{N-1}
\CF_{\xi'}^{-1}\left[\frac{2i\xi_k \om_\la t_1 s_1 s_2(t_1+\om_\la)}
{\mu_*\Fl_1(\xi',\la)}e^{-t_1 x_N}\wh g_k(0)\right](x')  \\
&\quad+
\CF_{\xi'}^{-1}\left[\frac{t_1s_1s_2(t_1+\om_\la)(\om_\la^2+|\xi'|^2)}
{\mu_* \Fl_1(\xi',\la)}e^{-t_1 x_N}\wh g_N(0)\right](x')  \\
&\quad +\sum_{k=1}^{N-1}
\CF_{\xi'}^{-1}\left[\frac{2i\xi_k \om_\la s_1 s_2 t_1^2(t_1+\om_\la)}
{\mu_*\Fl_1(\xi',\la)}\CM_0(x_N)\wh g_k(0)\right](x') \\
&\quad-
\CF_{\xi'}^{-1}\left[\frac{s_1 s_2 t_1^2(t_1+\om_\la)(\om_\la^2+|\xi'|^2)}
{\mu_*\Fl_1(\xi',\la)}\CM_0(x_N)\wh g_N(0)\right](x'),  \\
&\SSS^2(\la)h =\sum_{l=1}^2\CF_{\xi'}^{-1}
\left[\frac{(-1)^{l+1} \Fa(\xi',\la) t_l\Fm_l(\xi',\la)}{s_l \Fl_l(\xi',\la)} e^{-t_1 x_N}\wh h(0)\right](x') 
 \\
&\quad + \CF_{\xi'}^{-1}\left[\frac{\la s_2 t_1\Fm_1(\xi',\la)}{\Fl_1(\xi',\la)}\CM_0(x_N)\wh h(0)\right](x').
\end{align*}
%
%
%
%
In addition, we observe that for $A=\{(1,2),(2,1)\}$
\begin{align*}
\sum_{l=1}^2\frac{L_{l1}(4t_l\om_\la-3\om_\la^2-|\xi'|^2)}{t_l\det\BL}
&=\Fa(\xi',\la)\sum_{l=1}^2
\frac{(-1)^{l+1}\Fp_l(\xi',\la)}{s_l\Fl_l(\xi',\la)}, \\
\sum_{l=1}^2 \frac{L_{l1}(e^{-t_l x_N}-e^{-\om_\la x_N})}{t_l \det\BL}
&=s_1 s_2\sum_{l=1}^2 \frac{(-1)^{l+1}(t_l+\om_\la)}{s_l\Fl_l(\xi',\la)}\CM_l(x_N), \\
\sum_{l=1}^2 \frac{L_{l2}(4 t_l\om_\la-3\om_\la^2-|\xi'|^2)}{t_l\det\BL}
&=\Fb(\xi',\la)\sum_{(l,m)\in A} \frac{(-1)^{l+1}t_l\Fm_l(\xi',\la)\Fp_m(\xi',\la)}{\Fl_l(\xi',\la)(t_m+\om_\la)}, \\
\sum_{l=1}^2\frac{L_{l2}(e^{-t_l x_N}-e^{-\om_\la x_N})}{t_l\det\BL}
&=\sum_{(l,m)\in A}
\frac{(-1)^{l+1}t_l\Fm_l(\xi',\la)}{\Fl_l(\xi',\la)}\CM_m(x_N), \\
\sum_{l=1}^2\frac{L_{l1}(2t_l\om_\la-\om_\la^2-|\xi'|^2)}{t_l\det\BL}
&=\Fa(\xi',\la)\sum_{l=1}^2
\frac{(-1)^{l+1}\Fq_l(\xi',\la)}{s_l\Fl_l(\xi',\la)}, \\
\sum_{l=1}^2 \frac{L_{l1}(e^{-t_l x_N}-e^{-\om_\la x_N})}{\det\BL}
&= s_1 s_2\sum_{l=1}^2
\frac{(-1)^{l+1} t_l(t_l+\om_\la)}{s_l\Fl_l(\xi',\la)}\CM_l(x_N), \\
\sum_{l=1}^2 \frac{L_{l2}(2 t_l\om_\la-\om_\la^2-|\xi'|^2)}{t_l\det\BL} 
&=\Fb(\xi',\la)\sum_{(l,m)\in A} \frac{(-1)^{l+1}t_l\Fm_l(\xi',\la)\Fq_m(\xi',\la)}{\Fl_l(\xi',\la)(t_m+\om_\la)}, \\
\sum_{l=1}^2\frac{L_{l2}(e^{-t_l x_N}-e^{-\om_\la x_N})}{\det\BL}
&=t_1t_2\sum_{(l,m)\in A}
\frac{(-1)^{l+1}\Fm_l(\xi',\la)}{\Fl_l(\xi',\la)}\CM_m(x_N).
\end{align*}
Inserting these relations into $\SST_J^1(\la)\Bg$ and $\SST_J^2(\la)h$
furnishes that
\begin{align*}
&\SST_j^1(\la)\Bg =
\CF_{\xi'}^{-1}\bigg[
\frac{1}{\mu_*\om_\la} e^{-\om_\la x_N}\wh g_j(0)
\bigg](x') 
+\CF_{\xi'}^{-1}\bigg[
\frac{i\xi_j}{2\mu_*\om_\la^2}e^{-\om_\la x_N} \wh g_N(0)
\bigg](x') \\
&\quad+\sum_{l=1}^2\sum_{k=1}^{N-1}
\CF_{\xi'}^{-1}\bigg[
\frac{(-1)^{l+1}\xi_j\xi_kt_1t_2\Fa(\xi',\la)\Fp_l(\xi',\la)}
{\mu_*\om_\la s_l\Fl_l(\xi',\la)}e^{-\om_\la x_N}\wh g_k(0)
\bigg](x') \notag \\
&\quad +\sum_{l=1}^2 \CF_{\xi'}^{-1}\bigg[
\frac{(-1)^{l+1}i\xi_j t_1t_2(\om_\la^2+|\xi'|^2)\Fa(\xi',\la)\Fp_l(\xi',\la)}
{2\mu_*\om_\la^2 s_l\Fl_l(\xi',\la)} e^{-\om_\la x_N}\wh g_N(0)
\bigg](x')
\notag \\
&\quad-\sum_{l=1}^2\sum_{k=1}^{N-1}
\CF_{\xi'}^{-1}\bigg[
\frac{(-1)^{l+1}2\xi_j\xi_k\om_\la t_1t_2s_1s_2(t_l+\om_\la)}
{\mu_* s_l\Fl_l(\xi',\la)}\CM_l(x_N)\wh g_k(0)
\bigg](x') \notag \\
&\quad-\sum_{l=1}^2\CF_{\xi'}^{-1}\bigg[
\frac{(-1)^{l+1}i\xi_j t_1t_2(\om_\la^2+|\xi'|^2)s_1s_2(t_l+\om_\la) }{\mu_* s_l\Fl_l(\xi',\la)}
\CM_l(x_N)\wh g_N(0)\bigg](x'), \notag \\
&\SST_j^2(\la) h=
\sum_{(l,m)\in A}
\CF_{\xi'}^{-1}\bigg[
\frac{(-1)^{l+1}\la i\xi_j\Fb(\xi',\la)t_l\Fm_l(\xi',\la)\Fp_m(\xi',\la)}
{2\om_\la^2\Fl_l(\xi',\la)(t_m+\om_\la)}
e^{-\om_\la x_N}\wh h(0)\bigg](x') \\
&\quad -\sum_{(l,m)\in A}
\CF_{\xi'}^{-1}
\bigg[\frac{(-1)^{l+1}\la i\xi_j t_l \Fm_l(\xi',\la)}{\Fl_l(\xi',\la)}
\CM_m(x_N)\wh h(0)\bigg](x'), 
\notag \\
&\SST_N^1(\la)\Bg =
\CF_{\xi'}^{-1}\bigg[\frac{1}{2\mu_*\om_\la}e^{-\om_\la x_N}\wh g_N(0)\bigg](x') \\
&\quad -
\sum_{l=1}^2\sum_{k=1}^{N-1}
\CF_{\xi'}^{-1}
\bigg[
\frac{(-1)^{l+1} i\xi_k t_1 t_2 \Fa(\xi',\la)\Fq_l(\xi',\la)}{\mu_* s_l\Fl_l(\xi',\la)}
e^{-\om_\la x_N}\wh g_k(0)
\bigg](x') 
\notag \\
&\quad +\sum_{l=1}^2
\CF_{\xi'}^{-1}\bigg[
\frac{(-1)^{l+1} t_1 t_2(\om_\la^2+|\xi'|^2)\Fa(\xi',\la)\Fq_l(\xi',\la)}{2\mu_*\om_\la s_l\Fl_l(\xi',\la)}
e^{-\om_\la x_N}\wh g_N(0)
\bigg](x')
\notag \\
& \quad -\sum_{l=1}^2\sum_{k=1}^{N-1}
\CF_{\xi'}^{-1}\bigg[
\frac{(-1)^{l+1} 2 i\xi_k t_1 t_2\om_\la s_1s_2t_l(t_l+\om_\la)}{\mu_* s_l \Fl_l(\xi',\la)} \CM_l(x_N)\wh g_k(0)
\bigg](x') \notag \\
&\quad +\sum_{l=1}^2
\CF_{\xi'}^{-1}\bigg[
\frac{(-1)^{l+1} t_1t_2(\om_\la^2+|\xi'|^2)s_1 s_2 t_l(t_l+\om_\la)}{\mu_* s_l\Fl_l(\xi',\la)}
\CM_l(x_N)\wh g_N(0)\bigg](x') \notag \\
&\SST_N^2(\la)h =
\sum_{(l,m)\in A}\CF_{\xi'}^{-1}
\left[\frac{(-1)^{l+1}\la \Fb(\xi',\la)t_l\Fm_l(\xi',\la)\Fq_m(\xi',\la)}{2\om_\la \Fl_l(\xi',\la)(t_m+\om_\la)}
e^{-\om_\la x_N}\wh h(0)
\right](x') \notag \\
&\quad +\sum_{(l,m)\in A}
\CF_{\xi'}^{-1}
\bigg[\frac{(-1)^{l+1}\la t_1t_2\Fm_l(\xi',\la)}{\Fl_l(\xi',\la)}
\CM_m(x_N)\wh h(0)\bigg](x').
\notag
\end{align*}

\subsection{$\CR$-bounded solution operator families}
This subsection shows the existence of $\CR$-bounded solution operator families
associated with $(\rho,\Bu)$.
Let $\de>0$ and $\si\in(\si_*,\pi/2)$ for the same constant $\si_*$ as in Lemma \ref{lemm:symbols},
and let $j,k=1,\dots,N-1$ and $l,m=1,2$ in what follows.
In addition, we note that
$\Fa(\xi',\la), \Fb(\xi',\la)\in\BBM_{\si,0}^1(1)$ by Corollary \ref{coro:symbols}. 

First, we consider $\SSS^1(\la)$, $\SSS^2(\la)$.
By Lemma \ref{lemm:algebra}, \eqref{170426_1}, and Corollary \ref{coro:symbols}, 
\begin{align*}
\frac{2i\xi_k\om_\la t_1 s_1 s_2(t_1+\om_\la)}{\mu_*\Fl_1(\xi',\la)},
\,\frac{t_1 s_1 s_2(t_1+\om_\la)(\om_\la^2+|\xi'|^2)}{\mu_*\Fl_1(\xi',\la)}
&\in\BBM_{\si,0}^1(-2); \\
\frac{2i\xi_k\om_\la s_1 s_2 t_1^2(t_1+\om_\la)}{\mu_*\Fl_1(\xi',\la)},
\,\frac{s_1 s_2 t_1^2(t_1+\om_\la)(\om_\la^2+|\xi'|^2)}{\mu_*\Fl_1(\xi',\la)},
\,\frac{\Fa(\xi',\la)t_l\Fm_l(\xi',\la)}{s_l\Fl_l(\xi',\la)}
&\in\BBM_{\si,0}^1(-1); \\
\frac{\la s_2 t_1 \Fm_1(\xi',\la)}{\Fl_1(\xi',\la)}
&\in\BBM_{\si,0}^1(0).
\end{align*}
Let $\CN_1=N^2+N$ and $\CN_2=N^2+N+1$. Then
Corollary \ref{coro:density} \eqref{coro:density1}
furnishes that
there exist operators $\wt \SSS^1(\la)$, $\wt\SSS^2(\la)$, with 
\begin{align*}
\wt\SSS^l(\la)\in\Hol(\Si_{\si,\de},\CL(L_q(\BR_+^N)^{\CN_l},W_q^3(\BR_+^N))),
\end{align*} 
such that for any $\Bg\in W_q^1(\BR_+^N)^N$ and for any $h\in W_q^2(\BR_+^N)$
\begin{align*}
\SSS^1(\la)\Bg=\wt\SSS^1(\la)(\nabla \Bg,\la^{1/2}\Bg),\quad
\SSS^2(\la)h = \wt\SSS^2(\la)(\nabla^2 h,\la^{1/2}\nabla h,\la h),
\end{align*}
and also for $n=0,1$
\begin{align*}
\CR_{\CL(L_q(\BR_+^N)^{\CN_l},\FA_q(\BR_+^N))}
\big(\big\{\la^n(d/d\la)^n\big(\CS_\la\wt\SSS^l(\la)\big) \mid 
\la\in\Si_{\si,\de} \big\} \big) \leq C,
\end{align*}
with some positive constant $C=C_{N,q,\de,\si,\mu_*,\nu_*,\ka_*}$.

Next we consider $\SST_j^1(\la)$, $\SST_j^2(\la)$.
By Lemma \ref{lemm:algebra}, \eqref{170426_1}, and Corollary \ref{coro:symbols}, 
\begin{align*}
\frac{1}{\mu_*\om_\la},\,\frac{i\xi_j}{2\mu_*\om_\la^2},\,
\frac{\xi_j\xi_k t_1 t_2\Fa(\xi',\la)\Fp_l(\xi',\la)}{\mu_*\om_\la s_l\Fl_l(\xi',\la)},&\\
\frac{i\xi_jt_1 t_2 (\om_\la^2+|\xi'|^2)\Fa(\xi',\la)\Fp_l(\xi',\la)}{2\mu_*\om_\la^2s_l\Fl_l(\xi',\la)}
&\in\BBM_{\si,0}^1(-1); \\
\frac{2\xi_j\xi_k\om_\la t_1t_2s_1 s_2(t_l+\om_\la) }{\mu_*s_l\Fl_l(\xi',\la)},
\,\frac{i\xi_j t_1 t_2 (\om_\la^2+|\xi'|^2)s_1 s_2(t_l+\om_\la)}{\mu_* s_l\Fl_l(\xi',\la)}, &\\
\frac{\la i\xi_j\Fb(\xi',\la)t_l\Fm_l(\xi',\la)\Fp_m(\xi',\la)}{2\om_\la^2\Fl_l(\xi',\la)(t_m+\om_\la)} 
\in \BBM_{\si,0}^1(0); \quad
\frac{\la i\xi_j t_l\Fm_l(\xi',\la)}{\Fl_l(\xi',\la)}
&\in\BBM_{\si,0}^1(1).
\end{align*}
Corollary \ref{coro:R-bound0} and Corollary \ref{coro:density} \eqref{coro:density2} thus furnish that
there exist operators $\wt \SST_j^1(\la)$, $\wt\SST_j^2(\la)$, with
$\wt\SST_j^l(\la)\in\Hol(\Si_{\si,0},\CL(L_q(\BR_+^N)^{\CN_l},W_q^2(\BR_+^N)))$,
such that for any $\Bg\in W_q^1(\BR_+^N)^N$ and for any $h\in W_q^2(\BR_+^N)$
\begin{align*}
\SST_j^1(\la)\Bg=\wt\SST_j^1(\la)(\nabla \Bg,\la^{1/2}\Bg),\quad
\SST_j^2(\la)h = \wt\SST_j^2(\la)(\nabla^2 h,\la^{1/2}\nabla h,\la h),
\end{align*}
and also for $n=0,1$
\begin{align*}
\CR_{\CL(L_q(\BR_+^N)^{\CN_l},\FB_q(\BR_+^N))}
\big(\big\{\la^n(d/d\la)^n\big(\CT_\la\wt\SST_j^l(\la)\big) \mid 
\la\in\Si_{\si,0} \big\} \big) \leq C,
\end{align*}
with some positive constant $C=C_{N,q,\si,\mu_*,\nu_*,\ka_*}$

Finally, we consider $\SST_N^1(\la)$, $\SST_N^2(\la)$.
By Lemma \ref{lemm:algebra}, \eqref{170426_1}, and Corollary \ref{coro:symbols},  
\begin{align*}
\frac{1}{2\mu_*\om_\la},
\,\frac{i\xi_k t_1 t_2 \Fa(\xi',\la)\Fq_l(\xi',\la)}{\mu_* s_l\Fl_l(\xi',\la)},
\,\frac{t_1 t_2 (\om_\la^2+|\xi'|^2)\Fa(\xi',\la)\Fq_l(\xi',\la)}{2\mu_*\om_\la s_l\Fl_l(\xi',\la)}
&\in\BBM_{\si,0}^1(-1); \\
\frac{2i\xi_k t_1 t_2 \om_\la s_1 s_2 t_l(t_l+\om_\la)}{\mu_* s_l\Fl_l(\xi',\la)}, 
\,\frac{t_1 t_2 (\om_\la^2+|\xi'|^2)s_1 s_2 t_l(t_l+\om_\la)}{\mu_* s_l\Fl_l(\xi',\la)},& \\
\frac{\la \Fb(\xi',\la)t_l\Fm_l(\xi',\la)\Fq_m(\xi',\la)}{2\om_\la \Fl_l(\xi',\la)(t_m+\om_\la)}
\in\BBM_{\si,0}^1(0); \quad
\frac{\la t_1 t_2 \Fm_l(\xi',\la)}{\Fl_l(\xi',\la)}&\in\BBM_{\si,0}^1(1).
\end{align*}
Corollary \ref{coro:R-bound0} and Corollary \ref{coro:density} \eqref{coro:density2} thus furnish that
there exist operators $\wt \SST_N^1(\la)$, $\wt\SST_N^2(\la)$, with
$\wt\SST_N^l(\la)\in\Hol(\Si_{\si,0},\CL(L_q(\BR_+^N)^{\CN_l},W_q^2(\BR_+^N)^N))$,
such that for any $\Bg\in W_q^1(\BR_+^N)^N$ and for any $h\in W_q^2(\BR_+^N)$
\begin{align*}
\SST_N^1(\la)\Bg=\wt\SST_N^1(\la)(\nabla \Bg,\la^{1/2}\Bg),\quad
\SST_N^2(\la)h = \wt\SST_N^2(\la)(\nabla^2 h,\la^{1/2}\nabla h,\la h),
\end{align*}
and also for $n=0,1$
\begin{align*}
\CR_{\CL(L_q(\BR_+^N)^{\CN_l},\FB_q(\BR_+^N))}
\big(\big\{\la^n(d/d\la)^n\big(\CT_\la\wt\SST_j^l(\la)\big) \mid 
\la\in\Si_{\si,0} \big\} \big) \leq C,
\end{align*}
with some positive constant $C=C_{N,q,\si,\mu_*,\nu_*,\ka_*}$

Summing up the above results and setting for $\BU=(U_1,U_2,U_3,U_4,U_5)\in\wt\FX_q(\BR_+^N)$
\begin{align*}
\CA_2(\la)\BU&=\wt\SSS^1(\la)(U_1,U_2)+\wt\SSS^2(\la)(U_3,U_4,U_5), \\
\CB_2(\la)\BU&=(\wt \SST_1^1(\la)(U_1,U_2)+\wt\SST_1^2(\la)(U_3,U_4,U_5), \\
&\qquad\dots,\wt\SST_N^1(\la)(U_1,U_2)+\wt\SST_N^2(\la)(U_3,U_4,U_5))^\SST,
\end{align*}
we have completed the proof of Theorem \ref{theo:half}.

\section{Proof of Theorem \ref{theo2}}\label{sec:proof2}
In this section, we prove Theorem \ref{theo2} by using Theorem \ref{theo1}.
Let $\de=1/2$ and $\si\in(\si_*,\pi/2)$,
and let $c_0$ be a positive constant defined as $c_0=c(\de,\si)$
for the positive constant $c(\de,\si)$ given by Theorem \ref{theo1}.
We then observe by Definition \ref{defi:R} that for any $\la_0\geq 1$ and for $n=0,1$
\begin{align}\label{170419_1}
\CR_{\CL(\FX_q(\BR_+^N),\FA_q(\BR_+^N))}
\left(\left\{\la^n (d/d\la)^n\left(\CS_\la\CA_0(\la)\right)\mid \la\in\Si_{\si,\la_0}\right\}\right)
&\leq c_0, \\
\CR_{\CL(\FX_q(\BR_+^N),\FB_q(\BR_+^N))}
\left(\left\{\la^n(d/d\la)^n\left(\CT_\la\CB_0(\la)\right)\mid \la\in\Si_{\si,\la_0}\right\}\right)
&\leq c_0.
\notag
\end{align}

Let $\BF=(d,\Bf,\Bg,h)\in X_q(\BR_+^N)$ and $(\rho,\Bu)=(\CA_0(\la)\CF_\la\BF,\CB_0(\la)\CF_\la\BF)$.
Then $(\rho,\Bu)$ satisfies the following system:
\begin{equation}\label{170421_1}
\left\{\begin{aligned}
\la\rho+\di\Bu &= d  && \text{in $\BR_+^N$,} \\
\la\Bu-\mu_*\De\Bu-\nu_*\nabla \di\Bu+(\ga_*-\ka_*\De)\nabla \rho 
&=\Bf+\ga_*\nabla\CA_0(\la)\CF_\la\BF && \text{in $\BR_+^N$,} \\
\{\mu_*\BD(\Bu)+(\nu_*-\mu_*)\di\Bu\BI -(\ga_*-\ka_*\De)\rho\BI\}\Bn 
&=\Bg-\ga_*(\CA_0(\la)\CF_\la\BF)\Bn && \text{on $\BR_0^N$,} \\
\Bn\cdot\nabla\rho &= h && \text{on $\BR_0^N$.}
\end{aligned}\right.
\end{equation}

Now we set $\CG(\la)\BF$ and $X_{q,\la}(\BR_+^N)$ as
\begin{align*}
&\CG(\la)\BF
= (0, -\ga_*\nabla\CA_0(\la)\CF_\la\BF,\ga_*(\CA_0(\la)\CF_\la\BF)\Bn,0), \\
&X_{q,\la}(\BR_+^N) 
=\{\CF_\la\BF\in\FX_q(\BR_+^N) \mid \BF\in X_q(\BR_+^N)\} \quad (\la\in\Si_{\si,\la_0}).
\end{align*}
Note that $\CF_\la$ is a bijection from $X_q(\BR_+^N)$ onto $X_{q,\la}(\BR_+^N)$.
In view of \eqref{170421_1},
if there is the inverse operator $(I-\CG(\la))^{-1}\in\CL(X_q(\BR_+^N))$ of $I-\CG(\la)$,
then we see that
\begin{equation}\label{170419_4}
(\rho,\Bu)
=(\CA_0(\la)\CF_\la(I-\CG(\la))^{-1}\BF,\CB_0(\la)\CF_\la(I-\CG(\la))^{-1}\BF)
\end{equation}
solves the system \eqref{eq:rescale}.
It thus suffices to show the invertibility of $I-\CG(\la)$
and the $\CR$-boundedness of the inverse operator in what follows.

Here we define a further operator $\CH(\la)$ by
\begin{equation*}
\CH(\la)\BU 
= (0, \ga_*\nabla\CA_0(\la)\BU,-(\ga_*\CA_0(\la)\BU)\Bn,0)
\quad \text{for $\BU\in\FX_q(\BR_+^N)$,}
\end{equation*}
and introduce the following lemma proved essentially in \cite[Remark 3.2 (4)]{DHP03}
(cf. also \cite[page 84]{KW04} for Khinchine's inequality).

\begin{lemm}\label{lemm:multi}
Let $1\leq q <\infty$, and let $m(\la)$ be a bounded function defined on a subset $\La$ of $\BC$.
Assume that $M_m(\la)$ is a multiplication operator with $m(\la)$
defined by $M_m(\la)f=m(\la)f$ for any $f\in L_q(G)$ with an open set $G$ of $\BR^N$.
Then $\CR_{\CL(L_q(G))}(\{M_m(\la) \mid \la\in\La\})\leq K_q\|m\|_{L_\infty(\La)}$
for some positive constant $K_q\geq 1$ depending only on $q$.
\end{lemm}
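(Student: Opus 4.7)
The plan is to prove the inequality in Definition \ref{defi:R} with $X=Y=L_q(G)$ by reducing the $\CR$-bound for scalar multiplication operators to Khintchine's inequality, which is the natural tool whenever one must compare the $L^q(0,1)$-norm of a Rademacher sum to its $\ell^2$-norm of coefficients. The key observation is that multiplication by the scalar $m(\la_j)$ commutes with everything, so the spatial and probabilistic integrals decouple after applying Fubini's theorem.

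First I would fix $n\in\BN$, $\{\la_j\}_{j=1}^n\subset\La$, and $\{f_j\}_{j=1}^n\subset L_q(G)$, and rewrite the left-hand side of the $\CR$-bound inequality using Fubini:
\begin{equation*}
\int_0^1\Big\|\sum_{j=1}^n r_j(u) M_m(\la_j) f_j\Big\|_{L_q(G)}^q\intd u
=\int_G\int_0^1\Big|\sum_{j=1}^n r_j(u) m(\la_j) f_j(x)\Big|^q\intd u\intd x.
\end{equation*}
At each fixed $x\in G$, Khintchine's inequality (cf.\ \cite[page 84]{KW04}) gives positive constants $A_q, B_q$, depending only on $q$, such that
\begin{equation*}
\int_0^1\Big|\sum_{j=1}^n r_j(u) m(\la_j) f_j(x)\Big|^q\intd u
\leq B_q^q\Big(\sum_{j=1}^n |m(\la_j)|^2|f_j(x)|^2\Big)^{q/2}
\leq B_q^q\|m\|_{L_\infty(\La)}^q\Big(\sum_{j=1}^n |f_j(x)|^2\Big)^{q/2}.
\end{equation*}

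Next I would apply the lower Khintchine inequality in reverse to reinsert the Rademacher sum:
\begin{equation*}
\Big(\sum_{j=1}^n |f_j(x)|^2\Big)^{q/2}
\leq A_q^{-q}\int_0^1\Big|\sum_{j=1}^n r_j(u) f_j(x)\Big|^q\intd u.
\end{equation*}
Combining the two bounds, integrating in $x\in G$, and applying Fubini's theorem once more, I obtain
\begin{equation*}
\int_0^1\Big\|\sum_{j=1}^n r_j(u) M_m(\la_j) f_j\Big\|_{L_q(G)}^q\intd u
\leq (B_q/A_q)^q\|m\|_{L_\infty(\La)}^q
\int_0^1\Big\|\sum_{j=1}^n r_j(u) f_j\Big\|_{L_q(G)}^q\intd u.
\end{equation*}
Setting $K_q=\max(1,B_q/A_q)$ and taking $q$-th roots yields the assertion in view of Definition \ref{defi:R} (and Remark \ref{rema:Kahane}, which lets us work with the specific value $p=q$).

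No real obstacle is expected here; the only subtlety is ensuring that Khintchine's inequality is applied with the right constants and that the commutation between the $x$-integration and $u$-integration is justified, which is immediate since both integrands are nonnegative and measurable. The statement $K_q\geq 1$ can always be arranged by enlarging the constant.
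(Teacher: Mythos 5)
Your proof is correct and is exactly the standard Khintchine-based argument that the paper invokes by citation: the paper gives no proof of its own but refers to \cite[Remark 3.2 (4)]{DHP03} and \cite[page 84]{KW04}, both of which rely on precisely the Fubini–Khintchine decoupling you carry out. Nothing to add.
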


By \eqref{170419_1} and Lemma \ref{lemm:multi}, 
we have for $n=0,1$
\begin{align}\label{170305_3}
&\CR_{\CL(\FX_q(\BR_+^N))}
\left(\left\{\la^n(d/d\la)^n\left(\CF_\la\CH(\la)\right) \mid
\la\in \Si_{\si,\la_0}\right\}\right) \\
&\leq c_0 |\ga_*| K_q(2\la_0^{-1}+\la_0^{-1/2}). 
\notag
\end{align}
We here choose $\la_0$ large enough so that $c_0|\ga_*|K_q(2\la_0^{-1}+\la_0^{-1/2})\leq 1/2$.
Since $\CH(\la)\CF_\la\BF = \CG(\la)\BF$, the estimate \eqref{170305_3} with $n=0$ implies that
\begin{align*}
\|\CF_\la\CG(\la)\BF\|_{\FX_q(\BR_+^N)}
=\|\CF_\la\CH(\la)\CF_\la\BF\|_{\FX_q(\BR_+^N)}\leq \frac{1}{2}\|\CF_\la\BF\|_{\FX_q(\BR_+^N)}. 
\end{align*}
Thus $\CF_\la\CG(\la)\CF_\la^{-1} \in \CL(X_{q,\la}(\BR_+^N))$ with
$\|\CF_\la\CG(\la)\CF_\la^{-1}\|_{\CL(X_{q,\la}(\BR_+^N))}\leq 1/2$,
which, combined with the Neumann series expansion,
furnishes that $(I-\CF_\la\CG(\la)\CF_\la^{-1})^{-1}$ exists in $\CL(X_{q,\la}(\BR_+^N))$.
Hence, $\CF_\la^{-1}(I-\CF_\la\CG(\la)\CF_\la^{-1})^{-1}\CF_\la=(I-\CG(\la))^{-1}$
exists in $\CL(X_q(\BR_+^N))$,
and also $\CF_\la(I-\CG(\la))^{-1}\BF=(I-\CF_\la\CH(\la))^{-1}\CF_\la\BF$
by $\CG(\la)\CF_\la^{-1}=\CH(\la)$.
We set for $\BU\in\FX_q(\BR_+^N)$
\begin{equation*}
\CA(\la)\BU= \CA_0(\la)(I-\CF_\la\CH(\la))^{-1}\BU, \quad
\CB(\la)\BU=\CB_0(\la)(I-\CF_\la\CH(\la))^{-1}\BU,
\end{equation*}
and then $(\rho,\Bu)=(\CA(\la)\CF_\la\BF,\CB(\la)\CF_\la\BF)$ solves \eqref{eq:rescale}
as was discussed in \eqref{170419_4}.

Next we prove the $\CR$-boundedness of $\CA(\la)$, $\CB(\la)$ constructed above.
Noting that $c_0 |\ga_*| K_q(2\la_0^{-1}+\la_0^{-1/2})\leq 1/2$,
we have, by \eqref{170305_3}, the Neumann series expansion, and Definition \ref{defi:R},
\begin{align*}
&(I-\CF_\la\CH(\la))^{-1}\in \Hol(\Si_{\si,\la_0},\CL(\FX_q(\BR_+^N))), \\
&\CR_{\CL(\FX_q(\BR_+^N))}
\left(\left\{\la^n(d/d\la)^n(I-\CF_\la\CH(\la))^{-1}
\mid \la\in\Si_{\si,\la_0}\right\}\right)\leq 2.
\notag
\end{align*}
Combining these properties with \eqref{170419_1} and Remark \ref{rema:half} \eqref{rema:half2}
furnishes that
\begin{align*}
\CR_{\CL(\FX_q(\BR_+^N),\FA_q(\BR_+^N))}
\left(\left\{\la^n (d/d\la)^n\left(\CS_\la\CA(\la)\right)\mid \la\in\Si_{\si,\la_0}\right\}\right)
&\leq 4c_0, \\
\CR_{\CL(\FX_q(\BR_+^N),\FB_q(\BR_+^N))}
\left(\left\{\la^n (d/d\la)^n\left(\CT_\la\CB(\la)\right)\mid \la\in\Si_{\si,\la_0}\right\}\right)
&\leq 4c_0.
\end{align*}

Finally, we prove the uniqueness of the system \eqref{eq:rescale}.
Let $(\rho,\Bu)\in W_q^3(\BR_+^N)\times W_q^2(\BR_+^N)^N$ satisfy 
\eqref{eq:rescale} whose right-hand side is zero.
Then $(\rho,\Bu)$ satisfies \eqref{eq:full}
with $d=0$, $\Bf=-\ga_*\nabla\rho$, $\Bg=\ga_*\rho\Bn$, and $h=0$.
We thus have by \eqref{170419_1} with $n=0$
\begin{align*}
&\|\CS_\la\rho\|_{\FA_q(\BR_+^N)} 
\leq c_0\|(\Bf,\nabla\Bg,\la^{1/2}\Bg)\|_{L_q(\BR_+^N)} \\
&\leq c_0|\ga_*|(2\la_0^{-1}+\la_0^{-1/2})\|\CS_\la\rho\|_{\FA_q(\BR_+^N)}
\leq \frac{1}{2}\|\CS_\la\rho\|_{\FA_q(\BR_+^N)}
\end{align*}
for any $\la\in\Si_{\si,\la_0}$, 
which implies that $\rho=0$.
In addition, by \eqref{170419_1} with $n=0$ again,
\begin{equation*}
\|\CT_\la\Bu\|_{\FB_q(\BR_+^N)}
\leq c_0\|(\Bf,\nabla\Bg,\la^{1/2}\Bg)\|_{L_q(\BR_+^N)}=0.
\end{equation*}
Hence $\Bu=0$.
This completes the proof of Theorem \ref{theo2}.

\bigskip
\noindent{\bf Acknowledgments.}
The author is greatly indebted to Professor Yoshihiro Shibata
for suggesting the problem and for many stimulating conversations.
This research was partly supported by JSPS Grant-in-aid for Young Scientists (B) \#17K14224,
JSPS Japanese-German Graduate Externship at Waseda University,
and Waseda University Grant for Special Research Projects (Project number: 2017K-176).


\begin{thebibliography}{99}

\bibitem{DD01}
R.~Danchin and B.~Desjardins.
\newblock Existence of solutions for compressible fluid models of {K}orteweg
  type.
\newblock {\em Ann. Inst. H. Poincar\'e Anal. Non Lin\'eaire}, 18(1):97--133,
  2001.

\bibitem{DHP03}
R.~Denk, M.~Hieber, and J.~Pr\"uss.
\newblock $\mathcal{R}$-boundedness, {F}ourier multipliers and problems of
  elliptic and parabolic type.
\newblock {\em Mem. Amer. Math. Soc.}, 166(788):viii+114 pp., 2003.

\bibitem{DS85}
J.~E. Dunn and J.~Serrin.
\newblock On the thermomechanics of interstitial working.
\newblock {\em Arch. Rational Mech. Anal.}, 88(2):95--133, 1985.

\bibitem{ES13}
Y.~Enomoto and Y.~Shibata.
\newblock On the $\mathcal{R}$-sectoriality and the initial boundary value
  problem for the viscous compressible fluid flow.
\newblock {\em Funkcial. Ekvac.}, 56(3):441--505, 2013.

\bibitem{Haspot11}
B.~Haspot.
\newblock Existence of global weak solution for compressible fluid models of
  {K}orteweg type.
\newblock {\em J. Math. Fluid Mech.}, 13(2):223--249, 2011.

\bibitem{HL96}
H.~Hattori and D.~Li.
\newblock Global solutions of a high-dimensional system for {K}orteweg
  materials.
\newblock {\em J. Math. Anal. Appl.}, 198(1):84--97, 1996.

\bibitem{Kotschote08}
M.~Kotschote.
\newblock Strong solutions for a compressible fluid model of {K}orteweg type.
\newblock {\em Ann. Inst. H. Poincar\'e Anal. Non Lin\'eaire}, 25(4):679--696,
  2008.

\bibitem{Kotschote10}
M.~Kotschote.
\newblock Strong well-posedness for a {K}orteweg-type model for the dynamics of
  a compressible non-isothermal fluid.
\newblock {\em J. Math. Fluid Mech.}, 12(4):473--484, 2010.

\bibitem{Kotschote12}
M.~Kotschote.
\newblock Dynamics of compressible non-isothermal fluids of non-{N}ewtonian
  {K}orteweg type.
\newblock {\em SIAM J. Math. Anal.}, 44(1):74--101, 2012.

\bibitem{Kotschote14}
M.~Kotschote.
\newblock Existence and time-asymptotics of global strong solutions to dynamic
  {K}orteweg models.
\newblock {\em Indiana Univ. Math. J.}, 63(1):21--51, 2014.

\bibitem{KW04}
P.C. Kunstmann and L.~Weis.
\newblock Maximal ${L}_{p}$-regularity for parabolic equations, {F}ourier
  multiplier theorems and ${H}^\infty$-functional calculus.
\newblock In {\em Functional Analytic Methods for Evolution Equations}, volume
  1855 of {\em Lect. Notes in Math.}, pages 65--311. Springer, Berlin, 2004.

\bibitem{MS17}
S.~Maryani and H.~Saito.
\newblock On the $\mathcal{R}$-boundedness of solution operator families for two-phase Stokes resolvent equations.
\newblock {\em Differential Integral Equations}, 30(1-2):1--52, 2017.

\bibitem{Saito15b}
H.~Saito.
\newblock On the $\mathcal{R}$-boundedness of solution operator families of the
  generalized {S}tokes resolvent problem in an infinite layer.
\newblock {\em Math. Methods Appl. Sci.}, 38(9):1888--1925, 2015.

\bibitem{SS01}
Y.~Shibata and S.~Shimizu.
\newblock A decay property of the {F}ourier transform and its application to
  the {S}tokes problem.
\newblock {\em J. Math. Fluid Mech.}, 3(3):213--230, 2001.

\bibitem{SS12}
Y.~Shibata and S.~Shimizu.
\newblock On the maximal ${L}_p\text{-}{L}_q$ regularity of the {S}tokes
  problem with first order boundary condition; model problems.
\newblock {\em J. Math. Soc. Japan}, 64(2):561--626, 2012.

\bibitem{Weis01}
L.~Weis.
\newblock Operator-valued {F}ourier multiplier theorems and maximal
  ${L}_p$-regularity.
\newblock {\em Math. Ann.}, 319(4):735--758, 2001.

\end{thebibliography}


\end{document}